\newtheorem{definition}{Definition}
\newtheorem{theorem}{Theorem}
\newtheorem{lemma}{Lemma}
\newtheorem{corollary}{Corollary}
\newcommand{\suchthat}{\ifnum\currentgrouptype=16 \;\middle|\;\else\mid\fi}
\newcommand{\A}{\mathbb{A}}
\newcommand{\Abar}{\vec{\mathbb{A}}}
\DeclareMathOperator{\Hom}{Hom}
\DeclareMathOperator{\End}{End}
\DeclareMathOperator{\rep}{rep}
\newcommand{\arr}[1]{\operatorname{arr}(#1)}
\newcommand{\smat}[1]{ \left[\begin{smallmatrix} #1 \end{smallmatrix}\right] }
\newcommand{\Id}{\mathrm{id}}
\newcommand{\CL}{C\!L}
\newcommand{\intv}{\mathbb{I}}
\newcommand{\reltoeq}{\trianglerighteq}
\def\darrow{\mathrel{\ThisStyle{\ooalign{$\SavedStyle\rightarrow$\cr%
        \hfil\textcolor{white}{\rule{2\LMpt}{1\LMex}}\kern2\LMpt\hfil}}}}
\def\clap#1{\hbox to 0pt{\hss#1\hss}}
\def\dimVec #1{\expandafter\dimVec@i#1,,,,,\@nil}
\def\dimVec@i #1,#2,#3,#4,#5,#6,#7\@nil{%
  \ifx$#1$ \edef\paramA{0}\else \edef\paramA{#1} \fi
  \ifx$#2$ \edef\paramB{0}\else \edef\paramB{#2} \fi
  \ifx$#3$ \edef\paramC{0}\else \edef\paramC{#3} \fi
  \ifx$#4$ \edef\paramD{0}\else \edef\paramD{#4} \fi
  \ifx$#5$ \edef\paramE{0}\else \edef\paramE{#5} \fi
  \ifx$#6$ \edef\paramF{0}\else \edef\paramF{#6} \fi
  {\begin{smallmatrix}
      \paramD & \paramE & \paramF\\
      \paramA & \paramB & \paramC
    \end{smallmatrix}}
}
\def\dimVecF #1{\expandafter\dimVecF@i#1,,,\@nil}
\def\dimVecF@i #1,#2,#3,#4,#5\@nil{%
  \ifx$#1$ \edef\paramA{0}\else \edef\paramA{#1} \fi
  \ifx$#2$ \edef\paramB{0}\else \edef\paramB{#2} \fi
  \ifx$#3$ \edef\paramC{0}\else \edef\paramC{#3} \fi
  \ifx$#4$ \edef\paramD{0}\else \edef\paramD{#4} \fi
  {\begin{smallmatrix}
      \paramC & \paramD \\
      \paramA & \paramB
    \end{smallmatrix}}
}
\def\Int #1{\expandafter\Int@i#1\@nil}
\def\Int@i #1,#2\@nil{{#1{:}#2}}
\def\IntC #1{\expandafter\IntC@i#1\@nil}
\def\IntC@i #1,#2\@nil{{#1{,}#2}}
\def\itoi #1{\expandafter\itoi@i#1\@nil}
\def\itoi@i #1,#2,#3,#4\@nil{_{\Int{#1,#2}}^{\Int{#3,#4}}}
\definecolor{Teal}{HTML}{008080}
\colorlet{LightTeal}{Teal!85!white}
\definecolor{DarkCharcoal}{HTML}{4D4944}
\colorlet{Charcoal}{DarkCharcoal!85!white}
\colorlet{LightCharcoal}{Charcoal!50!white}
\colorlet{DarkRed}{red!70!black}
\colorlet{DarkBlue}{blue!70!black}
\colorlet{DarkGreen}{green!70!black}
\colorlet{DarkYellow}{yellow!70!black}
\definecolor{stage1}{HTML}{FF7F00}
\definecolor{stage2}{HTML}{00CC00}
\definecolor{stage3}{HTML}{FF9999}
\definecolor{stage4}{HTML}{FF0000}
\definecolor{stage5}{HTML}{7F00FF}
\colorlet{stage1t}{stage1!30}
\colorlet{stage2t}{stage2!30}
\colorlet{stage3t}{stage3!30}
\colorlet{stage4t}{stage4!30}
\colorlet{stage5t}{stage5!30}
\newcommand\Atile[4]{
\coordinate (cbl)	at (0+#1,		0+#2,	0+#3);
\coordinate (cbr)	at (5+#1,		0+#2,	0+#3);
\coordinate (ctl)	at (0+#1,		5+#2,	0+#3);
\coordinate (ctr)	at (5+#1,		5+#2,	0+#3);
\coordinate (ebl)	at (1.5+#1,		0+#2,	0+#3);
\coordinate (ebr)	at (3.5+#1,		0+#2,	0+#3);
\coordinate (erb)	at (5+#1,		.7+#2,	0+#3);
\coordinate (ert) 	at (5+#1,		4.3+#2,	0+#3);
\coordinate (etl) 	at (1.5+#1,		5+#2,	0+#3);
\coordinate (etr) 	at (3.5+#1,		5+#2,	0+#3);
\coordinate (elb)	at (0+#1,		1.5+#2,	0+#3);
\coordinate (elt) 	at (0+#1,		3.5+#2,	0+#3);
\coordinate (mid) 	at (1.86+#1,		2.5+#2,	0+#3);

\ifthenelse{#4>0}{
\fill[stage1t] (ert) -- (ctr) -- (etr);
\fill[stage1t] (erb) -- (cbr) -- (ebr);
\fill[stage1t] (elt) -- (elb) -- (mid);
\fill[stage1t] (elt) -- (ctl) -- (etl);
\fill[stage1t] (elb) -- (cbl) -- (ebl);
}{}

\ifthenelse{#4>2}{
\fill[stage3t] (elt) -- (etl) -- (mid);
\fill[stage3t] (elb) -- (ebl) -- (mid);
\fill[stage3t] (etl) -- (etr) -- (mid);
\fill[stage3t] (ebl) -- (ebr) -- (mid);
}{}

 \ifthenelse{#4>4}{
 \fill[stage5t] (ert) -- (etr) -- (mid) -- (ebr) -- (erb);
 }{}

\ifthenelse{#4>0}{
\draw[thick,stage1] (erb) -- (cbr) -- (ebr) -- (erb);
\draw[thick,stage1] (ert) -- (ctr) -- (etr) -- (ert);
\draw[thick,stage1] (etr) -- (etl) -- (ctl) -- (elt) -- (elb) -- (cbl) -- (ebl) -- (ebr);
\draw[thick,stage1] (elt) -- (mid) -- (elb);
\draw[thick,stage1] (elt) -- (etl);
\draw[thick,stage1] (elb) -- (ebl);
}{}

\ifthenelse{#4>2}{
\draw[thick,stage3] (etl) -- (mid);
\draw[thick,stage3] (ebl) -- (mid);
\draw[thick,stage3] (ebr) -- (mid) -- (etr);
}{}

\ifthenelse{#4>3}{
\draw[thick,stage4] (erb) -- (ert);
}{}

\ifthenelse{#4>4}{
\draw[thick,stage5] (ert) -- (mid) -- (erb);
}{}

\fill (cbl) circle (2pt);
\fill (cbr) circle (2pt);
\fill (ctl) circle (2pt);
\fill (ctr) circle (2pt);
\fill (ebl) circle (2pt);
\fill (ebr) circle (2pt);
\fill (elb) circle (2pt);
\fill (elt) circle (2pt);
\fill (erb) circle (2pt);
\fill (ert) circle (2pt);
\fill (etl) circle (2pt);
\fill (etr) circle (2pt);
\fill (mid) circle (2pt);
}
\newcommand\Btile[4]{
\coordinate (cbl)	at (0+#1,		0+#2,	0+#3);
\coordinate (cbr)	at (5+#1,		0+#2,	0+#3);
\coordinate (ctl)	at (0+#1,		5+#2,	0+#3);
\coordinate (ctr)	at (5+#1,		5+#2,	0+#3);
\coordinate (ebl)	at (1.3+#1,	0+#2,	0+#3);
\coordinate (ebr)	at (3.7+#1,	0+#2,	0+#3);
\coordinate (erb)	at (5+#1,		1.5+#2,	0+#3);
\coordinate (ert) 	at (5+#1,		3.5+#2,	0+#3);
\coordinate (etl) 	at (1.3+#1,	5+#2,	0+#3);
\coordinate (etr) 	at (3.7+#1,	5+#2,	0+#3);
\coordinate (elb)	at (0+#1,		.7+#2,	0+#3);
\coordinate (elt) 	at (0+#1,		4.3+#2,	0+#3);
\coordinate (mid) 	at (3+#1,		2.5+#2,	0+#3);

\ifthenelse{#4>0}{
\fill[stage1t] (ert) -- (ctr) -- (etr);
\fill[stage1t] (erb) -- (cbr) -- (ebr);
\fill[stage1t] (elt) -- (ctl) -- (etl);
\fill[stage1t] (elb) -- (cbl) -- (ebl);
}{}

\ifthenelse{#4>1}{
\fill[stage2t] (ert) -- (mid) -- (erb);
}{}

\ifthenelse{#4>2}{
\fill[stage3t] (ert) -- (mid) -- (etr);
\fill[stage3t] (erb) -- (mid) -- (ebr);
\fill[stage3t] (etl) -- (etr) -- (mid);
\fill[stage3t] (ebl) -- (ebr) -- (mid);
}{}

\ifthenelse{#4>3}{
\fill[stage4t] (elt) -- (etl) -- (mid) -- (ebl) -- (elb) -- (mid);
\fill[stage4t] (elt) -- (elb) -- (mid);
}{}

\ifthenelse{#4>0}{
\draw[thick,stage1] (ert) -- (ctr) -- (etr) -- (ert);
\draw[thick,stage1] (erb) -- (cbr) -- (ebr) -- (erb);
\draw[thick,stage1] (elt) -- (ctl) -- (etl) -- (elt);
\draw[thick,stage1] (elb) -- (cbl) -- (ebl) -- (elb);
\draw[thick,stage1] (erb) -- (ert);
}{}

\ifthenelse{#4>1}{
\draw[thick,stage2] (etl) -- (etr);
\draw[thick,stage2] (ebl) -- (ebr);
\draw[thick,stage2] (erb) -- (mid) -- (ert);
}{}

\ifthenelse{#4>2}{
\draw[thick,stage3] (mid) -- (etr);
\draw[thick,stage3] (mid) -- (ebr);
\draw[thick,stage3] (etl) -- (mid) -- (ebl);
}{}

\ifthenelse{#4>3}{
\draw[thick,stage4] (elt) -- (elb);
\draw[thick,stage4] (elt) -- (mid) -- (elb);
}{}

\fill (cbl) circle (2pt);
\fill (cbr) circle (2pt);
\fill (ctl) circle (2pt);
\fill (ctr) circle (2pt);
\fill (ebl) circle (2pt);
\fill (ebr) circle (2pt);
\fill (elb) circle (2pt);
\fill (elt) circle (2pt);
\fill (erb) circle (2pt);
\fill (ert) circle (2pt);
\fill (etl) circle (2pt);
\fill (etr) circle (2pt);
\fill (mid) circle (2pt);
}
\newcommand\Ctile[4]{
\coordinate (cbl) at 	(0+#1,	5+#2,	0+#3);
\coordinate (cbr) at 	(0+#1,	0+#2,	0+#3);
\coordinate (ctl) at 	(0+#1,	5+#2,	5+#3);
\coordinate (ctr) at 	(0+#1,	0+#2,	5+#3);
\coordinate (erb) at 	(0+#1,	0+#2,	1.5+#3);
\coordinate (ert) at 	(0+#1,	0+#2,	3.5+#3);
\coordinate (etr) at 	(0+#1,  1.5+#2,	5+#3);
\coordinate (etl) at 	(0+#1,	3.5+#2,	5+#3);
\coordinate (elb) at 	(0+#1,	5+#2,	1.5+#3);
\coordinate (elt) at 	(0+#1,	5+#2,	3.5+#3);
\coordinate (ebr) at 	(0+#1,	.7+#2,	0+#3);
\coordinate (ebl) at 	(0+#1,	4.3+#2,	0+#3);
\coordinate (mid) at 	(0+#1,	2.5+#2,	2.9+#3);

\ifthenelse{#4>0}{
\fill[stage1t] (erb) -- (cbr) -- (ebr);
\fill[stage1t] (elb) -- (cbl) -- (ebl);
\fill[stage1t] (elt) -- (ctl) -- (etl);
\fill[stage1t] (etr) -- (ctr) -- (ert);
}{}

\ifthenelse{#4>1}{
\fill[stage2t] (etl) -- (mid) -- (etr);
}{}

\ifthenelse{#4>2}{
\fill[stage3t] (elb) -- (elt) -- (etl) --(mid);
\fill[stage3t] (etr) -- (ert) -- (erb) -- (mid);
}{}

\ifthenelse{#4>3}{
\fill[stage4t] (mid) -- (erb) -- (ebr);
\fill[stage4t] (ebl) -- (elb) -- (mid);
\fill[stage4t] (ebl) -- (mid) -- (ebr);
}{}

\ifthenelse{#4>0}{
\draw[thick,stage1] (erb) -- (cbr) -- (ebr) -- (erb);
\draw[thick,stage1] (elb) -- (cbl) -- (ebl) -- (elb);
\draw[thick,stage1] (ert) -- (ctr) -- (etr) -- (ert);
\draw[thick,stage1] (elt) -- (ctl) -- (etl) -- (elt);
\draw[thick,stage1] (etl) -- (etr);
\draw[thick,stage1] (elb) -- (elt);
\draw[thick,stage1] (erb) -- (ert);
}{}

\ifthenelse{#4>1}{
\draw[thick,stage2] (etl) -- (mid) -- (etr);
}{}

\ifthenelse{#4>2}{
\draw[thick,stage3] (elb) -- (mid) -- (elt);
\draw[thick,stage3] (erb) -- (mid) -- (ert);
}{}

\ifthenelse{#4>3}{
\draw[thick,stage4] (ebl) -- (mid) -- (ebr);
\draw[thick,stage4] (ebl) -- (ebr);
}{}

\fill (cbl) circle (2pt);
\fill (cbr) circle (2pt);
\fill (ctl) circle (2pt);
\fill (ctr) circle (2pt);
\fill (ebl) circle (2pt);
\fill (ebr) circle (2pt);
\fill (elb) circle (2pt);
\fill (elt) circle (2pt);
\fill (erb) circle (2pt);
\fill (ert) circle (2pt);
\fill (etl) circle (2pt);
\fill (etr) circle (2pt);
\fill (mid) circle (2pt);
}
\newcommand\Dtile[4]{
\coordinate (cbl)	at (0+#1,		0+#2,	0+#3);
\coordinate (cbr)	at (5+#1,		0+#2,	0+#3);
\coordinate (ctl)	at (0+#1,		5+#2,	0+#3);
\coordinate (ctr)	at (5+#1,		5+#2,	0+#3);
\coordinate (ebl)	at (1.3+#1,	0+#2,	0+#3);
\coordinate (ebr)	at (3.7+#1,	0+#2,	0+#3);
\coordinate (erb)	at (5+#1,		.5+#2,	0+#3);
\coordinate (ert) 	at (5+#1,		4.5+#2,	0+#3);
\coordinate (etl) 	at (1.3+#1,	5+#2,	0+#3);
\coordinate (etr) 	at (3.7+#1,	5+#2,	0+#3);
\coordinate (elb)	at (0+#1,		1.3+#2,0+#3);
\coordinate (elt) 	at (0+#1,		3.7+#2,0+#3);
\coordinate (mid) 	at (2+#1,		2.5+#2,	0+#3);

\ifthenelse{#4>0}{
\fill[stage1t] (etr) -- (ctr) -- (ert);
\fill[stage1t] (ebr) -- (cbr) -- (erb);
\fill[stage1t] (ebl) -- (cbl) -- (elb);
\fill[stage1t] (etl) -- (ctl) -- (elt);
}{}

\ifthenelse{#4>1}{
\fill[stage2t] (mid) -- (elb) -- (elt);
}{}

\ifthenelse{#4>2}{
\fill[stage3t] (etl) -- (mid) -- (elt);
\fill[stage3t] (ebl) -- (mid) -- (elb);
\fill[stage3t] (mid)-- (etr) -- (etl);
\fill[stage3t] (mid) -- (ebl) -- (ebr);
}{}

\ifthenelse{#4>3}{
}{}

\ifthenelse{#4>4}{
\fill[stage5t] (mid) -- (ebr) -- (erb);
\fill[stage5t] (mid) -- (ert) -- (etr);
\fill[stage5t] (erb) -- (mid) -- (ert);
}{}

\ifthenelse{#4>0}{
\draw[thick,stage1] (ebl) -- (cbl) -- (elb) -- (ebl);
\draw[thick,stage1] (cbr) -- (ebr) -- (erb) -- (cbr);
\draw[thick,stage1] (elt) -- (ctl) -- (etl) -- (elt);
\draw[thick,stage1] (ctr) -- (etr) -- (ert) -- (ctr);
}{}

\ifthenelse{#4>1}{
\draw[thick,stage2] (ebl) -- (ebr);
\draw[thick,stage2] (etl) -- (etr);
\draw[thick,stage2] (elb) -- (elt);
\draw[thick,stage2] (elb) -- (mid) -- (elt);
}{}

\ifthenelse{#4>2}{
\draw[thick,stage3] (etl) -- (mid) -- (ebl);
\draw[thick,stage3] (ebr) -- (mid) -- (etr);
}{}

\ifthenelse{#4>3}{
}{}

\ifthenelse{#4>4}{
\draw[thick,stage5] (erb) -- (mid) -- (ert);
\draw[thick,stage5] (erb) -- (ert);
}{}

\fill (cbl) circle (2pt);
\fill (cbr) circle (2pt);
\fill (ctl) circle (2pt);
\fill (ctr) circle (2pt);
\fill (ebl) circle (2pt);
\fill (ebr) circle (2pt);
\fill (elb) circle (2pt);
\fill (elt) circle (2pt);
\fill (erb) circle (2pt);
\fill (ert) circle (2pt);
\fill (etl) circle (2pt);
\fill (etr) circle (2pt);
\fill (mid) circle (2pt);
}
\newcommand\Etile[4]{
\coordinate (cbl)	at (0+#1,		0+#2,	0+#3);
\coordinate (cbr)	at (5+#1,		0+#2,	0+#3);
\coordinate (ctl)	at (0+#1,		5+#2,	0+#3);
\coordinate (ctr)	at (5+#1,		5+#2,	0+#3);
\coordinate (ebl)	at (1.3+#1,	0+#2,	0+#3);
\coordinate (ebr)	at (3.7+#1,	0+#2,	0+#3);
\coordinate (erb)	at (5+#1,		1+#2,	0+#3);
\coordinate (ert) 	at (5+#1,		4+#2,	0+#3);
\coordinate (etl) 	at (1.3+#1,	5+#2,	0+#3);
\coordinate (etr) 	at (3.7+#1,	5+#2,	0+#3);
\coordinate (elb)	at (0+#1,		.5+#2,0+#3);
\coordinate (elt) 	at (0+#1,		4.5+#2,0+#3);
\coordinate (mid) 	at (3+#1,		2.5+#2,	0+#3);

\ifthenelse{#4>0}{
\fill[stage1t] (ebr) -- (cbr) -- (erb);
\fill[stage1t] (etr) -- (ctr) -- (ert);
\fill[stage1t] (ebl) -- (cbl) -- (elb);
\fill[stage1t] (etl) -- (ctl) -- (elt);
}{}

\ifthenelse{#4>2}{
\fill[stage3t] (mid) -- (etl) -- (etr) -- (ert) -- (erb) -- (ebr) -- (ebl);
}{}

\ifthenelse{#4>4}{
\fill[stage5t] (etl) -- (mid) -- (elt);
\fill[stage5t] (ebl) -- (mid) -- (elb);
\fill[stage5t] (elt) -- (mid) -- (elb);
}{}

\ifthenelse{#4>0}{
\draw[thick,stage1] (ebl) -- (cbl) -- (elb) -- (ebl);
\draw[thick,stage1] (cbr) -- (ebr) -- (erb) -- (cbr);
\draw[thick,stage1] (elt) -- (ctl) -- (etl) -- (elt);
\draw[thick,stage1] (ctr) -- (etr) -- (ert) -- (ctr);
}{}

\ifthenelse{#4>1}{
\draw[thick,stage2] (etl) -- (etr);
\draw[thick,stage2] (ebl) -- (ebr);
}{}

\ifthenelse{#4>2}{
\draw[thick,stage3] (ert) -- (mid) -- (etr);
\draw[thick,stage3] (ebl) -- (mid) -- (etl);
\draw[thick,stage3] (erb) -- (mid) -- (ebr);
\draw[thick,stage3] (ert) -- (erb);
}{}

\ifthenelse{#4>4}{
\draw[thick,stage5] (elb) -- (mid) -- (elt);
\draw[thick,stage5] (elb) -- (elt);
}{}

\fill (cbl) circle (2pt);
\fill (cbr) circle (2pt);
\fill (ctl) circle (2pt);
\fill (ctr) circle (2pt);
\fill (ebl) circle (2pt);
\fill (ebr) circle (2pt);
\fill (etl) circle (2pt);
\fill (etr) circle (2pt);
\fill (ert) circle (2pt);
\fill (erb) circle (2pt);
\fill (elt) circle (2pt);
\fill (elb) circle (2pt);
\fill (mid) circle (2pt);
}
\newcommand\EtileR[4]{
\coordinate (cbl)	at (0+#1,		0+#2,	0+#3);
\coordinate (cbr)	at (5+#1,		0+#2,	0+#3);
\coordinate (ctl)	at (0+#1,		5+#2,	0+#3);
\coordinate (ctr)	at (5+#1,		5+#2,	0+#3);
\coordinate (ebl)	at (1.3+#1,	0+#2,	0+#3);
\coordinate (ebr)	at (3.7+#1,	0+#2,	0+#3);
\coordinate (erb)	at (5+#1,		.5+#2,	0+#3);
\coordinate (ert) 	at (5+#1,		4.5+#2,	0+#3);
\coordinate (etl) 	at (1.3+#1,	5+#2,	0+#3);
\coordinate (etr) 	at (3.7+#1,	5+#2,	0+#3);
\coordinate (elb)	at (0+#1,		.7+#2,0+#3);
\coordinate (elt) 	at (0+#1,		4.3+#2,0+#3);
\coordinate (mid) 	at (2+#1,		2.5+#2,	0+#3);

\ifthenelse{#4>0}{
\fill[stage1t] (ebr) -- (cbr) -- (erb);
\fill[stage1t] (etr) -- (ctr) -- (ert);
\fill[stage1t] (ebl) -- (cbl) -- (elb);
\fill[stage1t] (etl) -- (ctl) -- (elt);
}{}

\ifthenelse{#4>2}{
\fill[stage3t] (mid) -- (etr) -- (etl) -- (elt) -- (elb) -- (ebl) -- (ebr);
}{}

\ifthenelse{#4>4}{
\fill[stage5t] (etr) -- (mid) -- (ert);
\fill[stage5t] (ebr) -- (mid) -- (erb);
\fill[stage5t] (ert) -- (mid) -- (erb);
}{}

\ifthenelse{#4>0}{
\draw[thick,stage1] (ebl) -- (cbl) -- (elb) -- (ebl);
\draw[thick,stage1] (cbr) -- (ebr) -- (erb) -- (cbr);
\draw[thick,stage1] (elt) -- (ctl) -- (etl) -- (elt);
\draw[thick,stage1] (ctr) -- (etr) -- (ert) -- (ctr);
}{}

\ifthenelse{#4>1}{
\draw[thick,stage2] (etl) -- (etr);
\draw[thick,stage2] (ebl) -- (ebr);
}{}

\ifthenelse{#4>2}{
\draw[thick,stage3] (ebr) -- (mid) -- (etr);
\draw[thick,stage3] (elt) -- (mid) -- (etl);
\draw[thick,stage3] (elb) -- (mid) -- (ebl);
\draw[thick,stage3] (elb) -- (elt);
}{}

\ifthenelse{#4>4}{
\draw[thick,stage5] (ert) -- (mid) -- (erb);
\draw[thick,stage5] (ert) -- (erb);
}{}

\fill (cbl) circle (2pt);
\fill (cbr) circle (2pt);
\fill (ctl) circle (2pt);
\fill (ctr) circle (2pt);
\fill (ebl) circle (2pt);
\fill (ebr) circle (2pt);
\fill (etl) circle (2pt);
\fill (etr) circle (2pt);
\fill (ert) circle (2pt);
\fill (erb) circle (2pt);
\fill (elt) circle (2pt);
\fill (elb) circle (2pt);
\fill (mid) circle (2pt);
}
\newcommand\Ftile[4]{
\coordinate (cbl) at 	(0+#1,	5+#2,	0+#3);
\coordinate (cbr) at 	(0+#1,	0+#2,	0+#3);
\coordinate (ctl) at 	(0+#1,	5+#2,	5+#3);
\coordinate (ctr) at 	(0+#1,	0+#2,	5+#3);
\coordinate (erb) at 	(0+#1,	0+#2,	1.3+#3);
\coordinate (ert) at 	(0+#1,	0+#2,	3.7+#3);
\coordinate (etr) at 	(0+#1,  .7+#2,	5+#3);
\coordinate (etl) at 	(0+#1,	4.3+#2,	5+#3);
\coordinate (elb) at 	(0+#1,	5+#2,	1.3+#3);
\coordinate (elt) at 	(0+#1,	5+#2,	3.7+#3);
\coordinate (ebr) at 	(0+#1,	.5+#2,	0+#3);
\coordinate (ebl) at 	(0+#1,	4.5+#2,	0+#3);
\coordinate (mid) at 	(0+#1,	2.5+#2,	3.5+#3);

\ifthenelse{#4>0}{
\fill[stage1t] (ebr) -- (cbr) -- (erb);
\fill[stage1t] (etr) -- (ctr) -- (ert);
\fill[stage1t] (ebl) -- (cbl) -- (elb);
\fill[stage1t] (etl) -- (ctl) -- (elt);
}{}

\ifthenelse{#4>2}{
\fill[stage3t] (elt) -- (etl) -- (mid);
\fill[stage3t] (ert) -- (etr) -- (mid);
}{}

\ifthenelse{#4>3}{
\fill[stage4t] (etl) -- (etr) -- (mid);
\fill[stage4t] (ert) -- (erb) -- (mid);
\fill[stage4t] (elt) -- (elb) -- (mid);
}{}

\ifthenelse{#4>0}{
\draw[thick,stage1] (ctl) -- (etl) -- (elt) -- (ctl);
\draw[thick,stage1] (cbl) -- (ebl) -- (elb) -- (cbl);
\draw[thick,stage1] (ebr) -- (cbr) -- (erb) -- (ebr);
\draw[thick,stage1] (etr) -- (ctr) -- (ert) -- (etr);
}{}

\ifthenelse{#4>1}{
\draw[thick,stage2] (etr) -- (mid) -- (etl);
\draw[thick,stage2] (elt) -- (elb);
\draw[thick,stage2] (ert) -- (erb);
}{}

\ifthenelse{#4>2}{
\draw[thick,stage3] (elt) -- (mid) -- (ert);
}{}

\ifthenelse{#4>3}{
\draw[thick,stage4] (mid) -- (elb);
\draw[thick,stage4] (mid) -- (erb);
\draw[thick,stage4] (etl) -- (etr);
}{}

\ifthenelse{#4>4}{
\draw[thick,stage5] (ebl) -- (ebr);
}{}

\fill (cbl) circle (2pt);
\fill (cbr) circle (2pt);
\fill (ctl) circle (2pt);
\fill (ctr) circle (2pt);
\fill (ebl) circle (2pt);
\fill (ebr) circle (2pt);
\fill (etl) circle (2pt);
\fill (etr) circle (2pt);
\fill (ert) circle (2pt);
\fill (erb) circle (2pt);
\fill (elt) circle (2pt);
\fill (elb) circle (2pt);
\fill (mid) circle (2pt);
}
\newcommand\sandallow[5]{
\Dtile{#2}{#3}{#4}{#5}
\Etile{5+#2}{#3}{#4}{#5}
\Ftile{5+#2}{#3}{#4}{#5}
\EtileR{10+#2}{#3}{#4}{#5}
\Etile{15+#2}{#3}{#4}{#5}
\Ftile{15+#2}{#3}{#4}{#5}
}
\newcommand\sandalup[5]{
\Atile{#2}{#3}{#4}{#5}
\Btile{5+#2}{#3}{#4}{#5}
\Ctile{5+#2}{#3}{#4}{#5}
\Atile{10+#2}{#3}{#4}{#5}
\Btile{15+#2}{#3}{#4}{#5}
\Ctile{15+#2}{#3}{#4}{#5}
}
\begin{document}

\title{Realizations of Indecomposable Persistence Modules of Arbitrarily Large Dimension}
\author{Micka\"{e}l Buchet\\TU Graz \and Emerson G. Escolar\\RIKEN AIP}



%
%
%

%

\maketitle

\begin{abstract}
  While persistent homology has taken strides towards becoming a widespread tool for data analysis, multidimensional persistence has proven more difficult to apply. One reason is the serious drawback of no longer having a concise and complete descriptor analogous to the persistence diagrams of the former. We propose a simple algebraic construction to illustrate the existence of infinite families of indecomposable persistence modules over regular grids of sufficient size.
  On top of providing a constructive proof of representation infinite type, we also provide realizations by topological spaces and Vietoris-Rips filtrations, showing that they can actually appear in real data and are not the product of degeneracies.
\end{abstract}


\section{Introduction}

Recently, persistent homology~\cite{topo_pers} has been established as a flagship tool of topological data analysis. It provides the persistence diagrams, an easy to compute and understand compact summary of topological features in various scales in a filtration.
Fields where it has been successfully applied include materials science~\cite{glass,leeDlotko}, neuroscience~\cite{cliqueneuro,kanari}, genetics~\cite{genetic} and medicine~\cite{cancer, cancer2}.

Over a filtration, persistence diagrams can be used because the persistence modules can be uniquely decomposed into indecomposable modules which are intervals.
To these intervals, we
associate the lifespans of topological features. However, when considering
persistence modules over more general underlying structures,
indecomposables are no longer intervals and can be more complicated.

As an example, in multidimensional persistence~\cite{multipers} over
the commutative grid, the representation category is no longer
representation finite. In other words, the number of possible
indecomposables is infinite for a large enough finite commutative
grid.
The minimal size for a commutative grid to be representation infinite is relatively small.
For two dimensional grids, we need a size of at least $2\times 5$ or $3\times 3$, as the $2\times n$ grid is representation finite for $n\leq 4$ \cite{pmcl}.
When considering three dimensional grids, it is enough to have a $2\times 2\times 2$ grid.
For a particular finite dimensional persistence module, it is
true that it can be uniquely decomposed into a direct sum of
indecomposables, but we cannot list all the possible
indecomposables a priori.

From another point of view, recent progress in software~\cite{lesnick}
has made practical application of multidimensional persistence more
accessible. However, they~\cite{lesnick} approach this by computing incomplete
invariants instead of indecomposable decompositions. This has the
advantage of being easier to compute than the full decomposition and
easier to visualize.

In this work, we aim to provide more intuition for the structure of some indecomposable modules.
First, we provide algebraic constructions of some infinite families of indecomposable modules over all representation infinite grids.
Next, we tackle the problem of realizing these constructions.
Any module over a commutative grid can realized as the $k$-th persistent homology module of a simplicial complex for any $k>0$. This result was first claimed in~\cite{multipers} and proved in~\cite{otter} with a construction which is straightforward algebraically but difficult to visualize.
Here, we realize our infinite families of indecomposable modules using a more visual construction. In all our algebraic constructions, our infinite families depend on a dimension parameter $d$ and a parameter $\lambda \in K$, which we set to $\lambda = 0$ for our topological constructions.
Our topological constructions are formed by putting together $d$ copies of a repeating simple pattern.
Finally we provide  a Vietoris-Rips construction for our realization of the $2\times 5$ case.
Moreover, we show that this construction is stable with respect to small perturbations.

A direct corollary of our result is a constructive proof of the representation infinite type of the grids we consider.
It also provides insight into one possible topological origin of these algebraic complications.
Given that the construction is stable with respect to noise and appears through a relatively simple configuration, we argue that these kinds of complicated indecomposables may appear when applying multidimensional persistence to real data.
Therefore these structures cannot be ignored and we hope that our construction provides insight into a source of indecomposability.


\section{Background}

We start with a quick overview of the necessary background.
First we recall some basic definitions from the representation theory of bound quivers and detail how we check the indecomposability of representations.
More details can be found in \cite{blue}, for example.
In the second part, we explain the block matrix formalism~\cite{matrixmethod} we use to simplify some computations.
We assume some familiarity with algebraic topology \cite{munkres}, in particular homology.

\subsection{Representations of bound quivers}

A quiver is a directed graph. In this work, we consider only quivers with a finite number of vertices and arrows and no cycles.
A particular example is the linear quiver with $n$ vertices.
Let $n\in \mathbb{N}$ and $\tau = (\tau_1,\tau_2,\hdots,\tau_{n-1})$ be a sequence of symbols $\tau_i = f$ or $\tau_i = b$. Below, the two-headed arrow
$\begin{tikzcd}{}\rar[leftrightarrow] & {} \end{tikzcd}$ stands for either an arrow pointing to the right or left.
The quiver $\A_n(\tau)$ is the quiver with $n$ vertices and $n-1$ arrows, where the $i^{th}$ arrow points to the right if $\tau_i = f$ and to the left otherwise:
$\begin{tikzcd}
  \bullet \rar[leftrightarrow]
  & \bullet \rar[leftrightarrow]
  & \hdots \rar[leftrightarrow]
  & \bullet
\end{tikzcd}$.
In the case that all arrows are pointing forwards, we use the notation $\Abar_n = \A_n(f\hdots f)$.

Throughout this work, let $K$ be a field. A representation $V$ of a quiver $Q$ is a collection $V = (V_i,V_\alpha)$  where $V_i$ is a finite dimensional $K$-vector space for each vertex $i$ of $Q$, and each \emph{internal map} $V_\alpha : V_i \rightarrow V_j$ is a linear map for each arrow $\alpha$ from $i$ to $j$ in $Q$.

A homomorphism from $V$ to $W$, both representations of the same quiver $Q$, is a collection of linear maps $\{f_i:V_i \rightarrow W_i\}$ ranging over vertices $i$ of $Q$ such that $W_\alpha f_i = f_j V_\alpha$ for each arrow $\alpha$ from $i$ to $j$. The set of all homomorphisms from $V$ to $W$ is the $K$-vector space $\Hom(V,W)$. The endomorphism ring of a representation $V$ is $\End(V) = \Hom(V,V)$.

General quivers do not impose any constraints on the internal maps of
their representations. However, we will mostly consider the case of
commutative quivers, a special kind of quiver bound by relations. In
particular, representations of a commutative quiver are required to
satisfy the property that they form a commutative diagram. For more
details see~\cite{blue}. In the rest of the article, we shall take $Q$ to mean either a quiver or a commutative quiver, depending on context, and $\rep Q$ its category of representations.

The language of representation theory was introduced to persistence in ~\cite{zigzag}, where zigzag persistence modules were considered as representations of $\A_n(\tau)$.
From now, we use the term persistence module over $Q$ interchangeably with a representation of $Q$.

Any pair of representations $V = (V_i,V_\alpha)$ and $W = (W_i,W_\alpha)$ of a quiver $Q$ has a direct sum $V\oplus W = (V_i\oplus W_i, V_\alpha \oplus W_\alpha)$ which is also a representation of $Q$.
A representation $V$ is said to be indecomposable if $V \cong W\oplus W'$ implies that either $W$ or $W'$ is the zero representation.
We are concerned with the indecomposability of representations and use the following property relating the endomorphism ring with indecomposability.

\begin{definition}
Let $R$ be a ring with unity.
$R$ is said to be \emph{local} if $0 \neq 1$ in $R$ and for each $x\in R$, $x$ or $1-x$ is invertible.
\end{definition}

\begin{lemma}[Corollary 4.8 of \cite{blue}]
  \label{lem:indec_endo}
  Let $V$ be a representation of a (bound) quiver $Q$.
  \begin{enumerate}
  \item If $\End{V}$ is local then $V$ is indecomposable.
  \item If $V$ is finite dimensional and indecomposable, then $\End{V}$ is local.
  \end{enumerate}
\end{lemma}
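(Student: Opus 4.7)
The plan is to prove the two implications separately, both hinging on the standard correspondence between idempotents of $\End(V)$ and direct-sum decompositions of $V$. For (1), I would argue directly: given $V \cong W \oplus W'$, the projection onto $W$ with kernel $W'$ is an idempotent $e \in \End(V)$, and so is $1-e$. By the local hypothesis, one of $e$ or $1-e$ is invertible. But an invertible idempotent must equal $1$, since multiplying $e^2 = e$ by $e^{-1}$ gives $e = 1$. So either $e = 1$ (whence $W' = 0$) or $1-e = 1$ (whence $W = 0$); in either case a summand vanishes and $V$ is indecomposable.

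For (2), the main engine is Fitting's lemma, which I would cite as a standard result. For any $f \in \End(V)$, since $\dim V < \infty$, the ascending chain $\ker f^i$ and the descending chain $\im f^i$ stabilize at a common power $N$, yielding a vector-space decomposition $V = \ker f^N \oplus \im f^N$. The crucial point is that both summands are in fact subrepresentations of $V$: because $f$ is a quiver endomorphism it commutes with every internal map $V_\alpha$, so each $V_\alpha$ preserves $\ker f^N$ and $\im f^N$. Indecomposability then forces one summand to vanish, making $f$ either nilpotent (if $\im f^N = 0$) or injective, hence bijective by finite-dimensionality, hence invertible. Thus every endomorphism of $V$ is nilpotent or invertible. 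The local condition now follows: if $f$ is invertible we are done; otherwise $f^N = 0$ for some $N$ and $1 - f$ has the explicit inverse $\sum_{i=0}^{N-1} f^i$. Finally $1 \neq 0$ in $\End(V)$ because $V$ is nonzero.

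The only nontrivial technical step is the Fitting decomposition itself, together with the observation that its factors are subrepresentations rather than mere subspaces of $V$; this uses only that an endomorphism of a representation commutes with the internal maps by definition, but it is easy to gloss over. With Fitting's lemma taken as a black box, the rest of the argument reduces to elementary idempotent manipulation together with the geometric-series inverse of a nilpotent perturbation of $1$.
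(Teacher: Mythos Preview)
Your proof is correct and is precisely the standard argument via Fitting's lemma that one finds in the cited reference. Note, however, that the paper itself does not give a proof of this lemma at all: it is stated as Corollary~4.8 of~\cite{blue} and simply quoted as background. So there is nothing in the paper to compare against beyond the citation; your write-up supplies the details that the paper omits, and those details match the textbook treatment.

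One minor remark: in part~(2) you conclude with ``$1 \neq 0$ in $\End(V)$ because $V$ is nonzero,'' but the paper's stated definition of indecomposable (``$V \cong W \oplus W'$ implies $W$ or $W'$ is zero'') does not by itself exclude $V = 0$. This is a harmless convention issue---every source that states this lemma tacitly assumes indecomposables are nonzero---but if you want the argument to be self-contained you should make that hypothesis explicit.
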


\subsection{Block matrix formalism}

Our first construction will be for a particular family of bound quivers called commutative ladders, which are the commutative grids of size $2 \times n$.
\begin{definition}
  The commutative ladder of length $n$ with orientation $\tau$, denoted $\CL_n(\tau)$ is
  \[
    \begin{tikzcd}
      \bullet \rar[leftrightarrow]
      & \bullet \rar[leftrightarrow] \arrow[dl, phantom, "\circlearrowleft", description]{}
      & \hdots \rar[leftrightarrow] \arrow[dl, phantom, "\circlearrowleft", description]{}
      & \bullet \arrow[dl, phantom, "\circlearrowleft", description]{} \\
      \bullet \rar[leftrightarrow] \uar
      & \bullet \rar[leftrightarrow] \uar
      & \hdots \rar[leftrightarrow]
      & \bullet \uar
    \end{tikzcd}
  \]
  which is a quiver with two copies of $\A_n(\tau)$ with the same orientation $\tau$ for the top and bottom rows, and  bound by all commutativity relations.
\end{definition}

Let us review the block matrix formalism for persistence modules on commutative ladders $\CL_n(\tau)$ introduced in \cite{matrixmethod}. We denote by $\arr{\rep{\A_n(\tau)}}$ the \emph{arrow category} (also known as the \emph{morphism category}) of $\rep{\A_n(\tau)}$, which is formed by the morphisms of $\rep{\A_n(\tau)}$ as objects. The following proposition allows us to identify representations of the commutative ladder $\CL_n(\tau)$ with morphisms between representations of $\A_n(\tau)$. Since the structure of the latter is well-understood, we use this to simplify some computations.
\begin{lemma}
  \label{prop:isothm1}
  Let $\tau$ be an orientation of length $n$. There is an isomorphism of
  $K$-categories
  \[
    F:\rep{\CL_n(\tau)} \cong \arr{\rep{\A_n(\tau)}}.
  \]
\end{lemma}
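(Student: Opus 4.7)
The plan is to exhibit $F$ explicitly on objects and morphisms, construct an explicit inverse functor $G$, and verify that $F$ and $G$ are mutually inverse, all by simply unpacking the definitions. The content of the lemma is essentially bookkeeping: the vertical maps and commutativity squares in a representation of $\CL_n(\tau)$ are the same data as a morphism between the two $\A_n(\tau)$-representations given by the bottom and top rows.

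More concretely, let $M = (M_i, M_\alpha) \in \rep{\CL_n(\tau)}$, and index the bottom vertices $1, \dots, n$ and the top vertices $1', \dots, n'$, with vertical arrows $\gamma_i : i \to i'$ and horizontal arrows $\alpha_i$ on the bottom and $\alpha_i'$ on top. I would define
\[
F(M) = \bigl( f : V \to W \bigr),
\]
where $V = (M_i, M_{\alpha_i}) \in \rep{\A_n(\tau)}$ is the bottom row, $W = (M_{i'}, M_{\alpha_i'})$ is the top row, and $f = (M_{\gamma_i})_{i=1}^n$. The commutativity relations in $\CL_n(\tau)$ are exactly the equations $M_{\alpha_i'} M_{\gamma_i} = M_{\gamma_j} M_{\alpha_i}$ (or the analogous identity if $\alpha_i$ points the other way), which say precisely that $f$ is a morphism in $\rep{\A_n(\tau)}$. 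On morphisms, a homomorphism $\phi : M \to M'$ consists of maps $\phi_i, \phi_{i'}$ making every square commute; splitting these into a bottom family $\phi_V = (\phi_i)$ and top family $\phi_W = (\phi_{i'})$ gives morphisms of representations $\phi_V : V \to V'$, $\phi_W : W \to W'$ whose commutation with $f$ and $f'$ is exactly the commutativity of the vertical squares in the ladder. Thus $F(\phi) = (\phi_V, \phi_W)$ is a morphism in $\arr{\rep{\A_n(\tau)}}$.

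For the inverse $G$, given $f : V \to W$ with $V = (V_i, V_{\alpha_i})$ and $W = (W_i, W_{\alpha_i})$, I would assemble the obvious ladder representation by placing $V$ on the bottom, $W$ on top, and taking the components of $f$ as vertical maps; the squares commute because $f$ is a morphism. Similarly one defines $G$ on morphisms. Then $F \circ G$ and $G \circ F$ are the identity functors by construction, and both $F$ and $G$ are $K$-linear since they are given by regrouping the same underlying linear data. Functoriality (preservation of composition and identities) is immediate because composition in both categories is componentwise composition of linear maps.

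The main obstacle, such as it is, is purely notational: being careful about orientations (which vertical arrow points up and which horizontal arrow points left vs.\ right in $\tau$) so that the commutativity equations in $\CL_n(\tau)$ really match the condition that $f$ be a morphism of $\A_n(\tau)$-representations. Once the indexing convention is fixed, there is nothing deep to check; the two categories have the same objects and morphisms after a trivial relabeling, so $F$ is not merely an equivalence but an isomorphism of $K$-linear categories.
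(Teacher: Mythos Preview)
Your proposal is correct and takes essentially the same approach as the paper: define $F$ by sending a ladder representation to the morphism from its bottom row to its top row given by the vertical maps, and observe that this admits an obvious inverse. The paper's proof is a terse one-paragraph version of exactly what you wrote out in detail.
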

\begin{proof}
  Given $M \in \rep{\CL_n(\tau)}$, the bottom row (denoted $M_1$) of $M$ and the top row (denoted $M_2$) of $M$ are representations of $\A_n(\tau)$.  By the commutativity relations imposed on $M$, the internal maps of $M$ pointing upwards defines a morphism $\phi:M_1 \rightarrow M_2$ in $\rep{\A_n(\tau)}$. The functor $F$ maps $M$ to this morphism and admits an obvious inverse.
\end{proof}

Each morphism $(\phi:U\rightarrow V) \in \arr{\rep{\A_n(\tau)}}$ has  representations of $\A_n(\tau)$ as domain and codomain. As such, Both of them can be independently decomposed into interval representations. Thus, $\phi$ is isomorphic to some
\[
  \Phi:
  \bigoplus_{1\leq a \leq b \leq n} \intv[a,b]^{m_{a,b}} \rightarrow
  \bigoplus_{1\leq c \leq d \leq n} \intv[c,d]^{m'_{c,d}}.
\]
Relative to these decompositions, $\Phi$ can be written in a block matrix form $\Phi = [\Phi\itoi{a,b,c,d}]$ where each block is defined by composition with the corresponding inclusion and projection
\[
  \Phi\itoi{a,b,c,d}:
  \begin{tikzcd}[column sep=1.5em]
    \intv[a,b]^{m_{a,b}} \rar{\iota} &
    \bigoplus\limits_{1\leq a \leq b \leq n} \intv[a,b]^{m_{a,b}} \rar{\Phi} &
    \bigoplus\limits_{1\leq c \leq d \leq n} \intv[c,d]^{m'_{c,d}} \rar{\pi} &
    \intv[c,d]^{m'_{c,d}}.
  \end{tikzcd}
\]

Next, we analyze these blocks by looking at the homomorphism spaces between intervals.
\begin{definition}
  The relation $\reltoeq$ is the relation on the set of interval
  representations of $\A_n(\tau)$, $\{\intv[b,d] : 1\leq b \leq d \leq n\}$,
  such that $\intv[a,b] \reltoeq \intv[c,d]$ if
  and only if $\Hom(\intv[a,b],\intv[c,d])$ is nonzero.
\end{definition}

\begin{lemma}[Lemma 1 of \cite{matrixmethod}]
  \label{lemma:homdim}
  Let $\intv[a,b],\intv[c,d]$ be interval representations of $\A_n(\tau)$.
  \begin{enumerate}
  \item The dimension of $\Hom(\intv[a, b], \intv[c, d])$ as a $K$-vector space is either $0$ or $1$.

  \item There exists a canonical basis $\left\{f\itoi{a,b,c,d}\right\}$ for each nonzero $\Hom(\intv[a, b], \intv[c, d])$ such that
    \[
      (f\itoi{a,b,c,d})_i = \left\{
        \begin{array}{ll}
          1_K: K \rightarrow K, & \text{if }i \in [a,b] \cap [c,d] \\
          0, & \text{otherwise.}
        \end{array}
      \right.
    \]
  \end{enumerate}
\end{lemma}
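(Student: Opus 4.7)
The plan is to work out the structure of a homomorphism between two interval representations component by component. Recall that $\intv[a,b]$ places $K$ at each vertex $i \in [a,b]$ and $0$ elsewhere, with internal maps equal to $1_K$ on arrows whose two endpoints both lie in $[a,b]$ and zero otherwise. A homomorphism $f : \intv[a,b] \to \intv[c,d]$ is a tuple of component maps $f_i$; whenever $i \notin [a,b] \cap [c,d]$, either the domain or codomain of $f_i$ is the zero space, so $f_i = 0$ automatically. Hence if $[a,b] \cap [c,d] = \emptyset$ then $\Hom(\intv[a,b], \intv[c,d]) = 0$ and the statement is trivial.

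Assume now that the intersection is nonempty. Writing $f_i = \lambda_i \cdot 1_K$ for $i \in [a,b] \cap [c,d]$, the task is to determine which tuples $(\lambda_i)$ define valid homomorphisms. For every arrow $\alpha$ between consecutive vertices $i,j$ both lying in $[a,b] \cap [c,d]$, the internal maps $V_\alpha$ and $W_\alpha$ of $\intv[a,b]$ and $\intv[c,d]$ at $\alpha$ both equal $1_K$, and the commutativity condition $W_\alpha f_i = f_j V_\alpha$ simply collapses to $\lambda_i = \lambda_j$. Since $[a,b] \cap [c,d]$ is itself a subinterval of $[1,n]$, propagating these equalities forces all $\lambda_i$ to equal a common scalar $\lambda \in K$. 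This already shows $\dim \Hom(\intv[a,b], \intv[c,d]) \leq 1$.

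It then remains to handle the at most two boundary arrows, namely the one immediately to the left of $\max(a,c)$ (if any) and the one immediately to the right of $\min(b,d)$ (if any). For each such arrow $\alpha$, with one endpoint interior to $[a,b] \cap [c,d]$ and the other exterior, exactly one of $V_\alpha, W_\alpha$ is zero and the other is $1_K$, depending on the orientation of $\alpha$ and on whether the exterior vertex lies in $[a,b]$ or in $[c,d]$. A short case check shows that the commutativity equation at $\alpha$ either holds automatically, leaving $\lambda$ free, or reduces to $\lambda = 0$. This yields $\dim \Hom \in \{0,1\}$, and when $\lambda$ is not forced to vanish, setting $\lambda = 1$ produces the claimed canonical basis $f\itoi{a,b,c,d}$ whose components are $1_K$ on $[a,b] \cap [c,d]$ and $0$ elsewhere. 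The main obstacle is this boundary case analysis: one must split on the relative positions of the endpoints $a,b,c,d$ and on the two relevant orientations in $\tau$, while also handling the degenerate situations where $\max(a,c) = 1$ or $\min(b,d) = n$ so that one or both boundary arrows are absent.
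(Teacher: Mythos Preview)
Your proposal is correct and follows essentially the same approach as the paper: both argue that commutativity forces any morphism to be determined by a single component $g_j \in \Hom(K,K)$ for $j \in [a,b]\cap[c,d]$, yielding $\dim \leq 1$, and then take $g_j = 1_K$ for the canonical generator. Your version is simply more explicit about the propagation of the equalities $\lambda_i = \lambda_j$ and about the boundary-arrow case analysis, which the paper's two-line proof leaves entirely to the reader.
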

\begin{proof}
 By the commutativity requirement on morphisms between representations, a nonzero morphism $g = \{g_i\}\in\Hom(\intv[a,b],\intv[c,d])$, if it exists, is completely determined by one of its internal morphisms, $g_j \in \Hom(K,K) $ for some fixed $j$ in $[a,b] \cap [c,d]$.
Since $\Hom(K,K)$ is of dimension $1$, part 1 follows.
The $f\itoi{a,b,c,d}$ in part 2 is the $g$ determined by $g_j = 1_K$.
\end{proof}

Each block $\Phi\itoi{a,b,c,d}: \intv[a,b]^{m_{a,b}} \rightarrow \intv[c,d]^{m'_{c,d}}$ can be written in a matrix form where each entry is a morphism in $\Hom(\intv[a,b],\intv[c,d])$.
Lemma~\ref{lemma:homdim} allows us to factor the common basis element of each entry and rewrite $\Phi\itoi{a,b,c,d}$ using a $K$-matrix $M\itoi{a,b,c,d}$ of size $m'_{c,d} \times m_{a,b}$:
\[
  \Phi\itoi{a,b,c,d} = \left\{
    \begin{array}{ll}
      M\itoi{a,b,c,d} f\itoi{a,b,c,d}, & \text{if } \intv[a,b] \reltoeq \intv[c,d], \\
      0, & \text{otherwise}
    \end{array}
  \right.
\]


\section{Commutative grid $2\times 5$}\label{sec:two_by_five}

We now study the construction of a family of indecomposable persistence modules for the commutative ladders of length $5$.
Details are provided for $\rep{\CL_5(ffff)}$ using the formalism introduced for $\arr{\rep{\Abar_5}}$.
Slight adaptations to the construction make it work for any orientation $\tau$.

\subsection{Algebraic construction}

Define the interval representations $D_1 = \intv[2,5]$, $D_2 =\intv[3,4]$, $R_1 = \intv[1,4]$, and $R_2 = \intv[2,3]$ of $\Abar_5$.
Note that for each of the four choices of ordered pairs $(D_i, R_j)$, there exists a nonzero morphism $D_i \rightarrow R_j$, while no nonzero morphism exists between $D_1$ and $D_2$, and between $R_1$ and $R_2$.
The directed graph with vertices given by the chosen intervals and arcs $x\rightarrow y$ defined by $x \reltoeq y$ is the complete bipartite directed graph $\vec{K}_{2,2}$.
\[
  \begin{tikzcd}[column sep=5em, row sep=3em]
    D_1 = \intv[2,5]
    \rar{f\itoi{2,5,1,4}}
    \ar[pos=0.3]{dr}{f\itoi{2,5,2,3}}
    & \intv[1,4] = R_1 \\
    D_2 = \intv[3,4]
    \rar[swap]{f\itoi{3,4,2,3}}
    \ar[swap, crossing over, pos=0.3]{ur}{f\itoi{3,4,1,4}}
    & \intv[2,3] = R_2
  \end{tikzcd}
\]

We provide some insight concerning this configuration from the Auslander-Reiten quiver, which describes irreducible maps between indecomposables. For our purpose, we use directed paths in the Auslander-Reiten quiver to \emph{suggest} possible locations of nonzero maps between indecomposable representations. For $I_1 \not\cong I_2$ indecomposable representations, the presence of a directed path in the Auslander-Reiten quiver from $I_1$ to $I_2$ does \emph{not} imply that there is a nonzero morphism $f:I_1 \rightarrow I_2$. See for example the path $\intv[4,4] \rightarrow \intv[3,4] \rightarrow \intv[3,3]$ below. The converse (existence of nonzero map implies directed path) does hold in the representation finite case (see Collorary IV.5.6 of \cite{blue}).

Below, we exhibit the Auslander-Reiten quiver
\[
  \scalebox{0.8}{
    \begin{tikzpicture}[baseline=(A.center)]
      \matrix (A) [matrix of math nodes, column sep= 6 mm, row sep = 4
      mm, nodes in empty cells, ampersand replacement=\&]
      {
        \&           \&           \&           \& \intv[1,5] \&           \&           \&           \&    \\
        \&           \&           \& \intv[2,5] \&           \& \intv[1,4] \&           \&           \& \\
        \&           \& \intv[3,5] \&           \& \intv[2,4] \&           \& \intv[1,3] \&           \& \\
        \& \intv[4,5] \&           \& \intv[3,4] \&           \& \intv[2,3] \&           \& \intv[1,2] \&      \\
        \intv[5,5] \&           \& \intv[4,4] \&           \& \intv[3,3] \&           \& \intv[2,2] \&           \& \intv[1,1]   \\
      };

      \draw[red] (A-2-4) circle (1.5em);
      \draw[red] (A-4-4) circle (1.5em);

      \draw[red] (A-2-6) circle (1.5em);
      \draw[red] (A-4-6) circle (1.5em);

      \foreach \y in {5,4,3,2} {
        \foreach \l in {2,...,\y} {
          \pgfmathtruncatemacro{\x}{2*\l-3  + 5-\y};

          \pgfmathtruncatemacro{\yd}{\y-1};
          \pgfmathtruncatemacro{\xu}{\x+1};
          \pgfmathtruncatemacro{\xuu}{\x+2};
          \path [->] (A-\y-\x) edge (A-\yd-\xu);
          \path [->] (A-\yd-\xu) edge (A-\y-\xuu);
        }
      }
    \end{tikzpicture}
  }
\]
of $\Abar_5$, together with our choice of indecomposables encircled.

On the other hand, for example, we note that $D'_1 = \intv[3,5]$, $D'_2 =\intv[4,4]$, $R'_1 = \intv[2,4]$, and $R'_2 = \intv[3,3]$ does not work since there is no nonzero map from $D'_2$ to $R'_2$, even though there is a directed path from $D_2'$ to $R'_2$ in the Auslander-Reiten quiver.

Such a configuration is crucial to ensure that all four blocks can be nonzero in $\phi(d,\lambda)$ defined below, and to ensure indecomposability. The second ingredient we use is the Jordan cell $J_d(\lambda)$.
Given $d\geq 1$ and $\lambda\in K$, $J_d(\lambda)$ is the matrix with value $\lambda$ on the diagonal, $1$ on the superdiagonal, and $0$ elsewhere.
\[
J_3(\lambda) = \left[\begin{array}{ccc} \lambda & 1 & 0\\ 0 & \lambda &1 \\ 0 & 0 & \lambda\end{array}\right].
\]

With this, we are ready to define an arrow $\phi(d,\lambda)$ and a
representation $M(d,\lambda)$, which are identified using
Proposition~\ref{prop:isothm1} as $F(M(d,\lambda)) =\phi(d,\lambda)$.

\begin{definition}
  Let $d\geq 1$ and $\lambda \in K$.
  \begin{enumerate}
  \item We define the arrow $\phi(d,\lambda) \in \arr{\rep{\Abar_5}}$ $
      \phi(d,\lambda):
      \intv[3,4]^d \oplus \intv[2,5]^d
      \rightarrow
      \intv[1,4]^d \oplus \intv[2,3]^d$
    by the matrix form
   $
      \phi(d,\lambda) = \left[
        \begin{array}{cc}
          I f\itoi{3,4,1,4} & I f\itoi{2,5,1,4} \\
          I f\itoi{3,4,2,3} & J_d(\lambda) f\itoi{2,5,2,3}
        \end{array}
      \right]
    $
    where $I$ is the $d\times d$ identity matrix.
  \item We also define the representation $M(d,\lambda) \in \rep{\CL_5(ffff)}$ by
    \[
      M(d,\lambda):
      \begin{tikzcd}[ampersand replacement=\&]
        K^d \rar{\smat{I\\0}} \&
        K^{2d} \rar{\Id} \&
        K^{2d} \rar{\smat{I&0}} \&
        K^d \rar \&
        0
        \\
        0 \rar \uar \&
        K^d \rar[swap]{\smat{0\\I}} \uar{\smat{I\\J_d(\lambda)}} \&
        K^{2d} \rar[swap]{\Id} \uar{\smat{I&I\\I&J_d(\lambda)}} \&
        K^{2d} \rar[swap]{\smat{0&I}} \uar{\smat{I&I}} \&
        K^d \uar
      \end{tikzcd}
    \]
  \end{enumerate}
\end{definition}

We now show that the representations constructed above are indeed indecomposable and are pairwise non-isomorphic.

\begin{theorem}
  \label{prop:cl5_example}
  Let $d \geq 1$ and $\lambda,\lambda' \in K$.
  \begin{enumerate}
  \item $M(d, \lambda)$ is indecomposable.
  \item If $\lambda \neq \lambda'$ then $M(d,\lambda) \not\cong M(d,\lambda')$.
  \end{enumerate}
\end{theorem}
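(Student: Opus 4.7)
The plan is to exploit the block matrix formalism together with Lemma~\ref{prop:isothm1} and the indecomposability criterion of Lemma~\ref{lem:indec_endo}. Passing through the isomorphism $F$, it suffices to work with the arrow $\phi(d,\lambda)$ instead of $M(d,\lambda)$ directly: part (1) reduces to proving that $\End(\phi(d,\lambda))$ is a local $K$-algebra, and part (2) reduces to showing that no invertible morphism $\phi(d,\lambda)\to\phi(d,\lambda')$ exists when $\lambda\neq\lambda'$.

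First I would analyze the shape of an endomorphism $(\alpha,\beta)$ of $\phi(d,\lambda)$. The Hom-space information recorded just before the definition of $\phi(d,\lambda)$ — namely that $\Hom(D_1,D_2)=\Hom(D_2,D_1)=0$ and similarly for $R_1,R_2$ — forces both $\alpha$ and $\beta$ to be block diagonal. Combined with $\End(\intv[a,b])\cong K$ and Lemma~\ref{lemma:homdim}, in the matrix formalism these endomorphisms take the form $\alpha=\smat{A_1 & 0 \\ 0 & A_2}$ and $\beta=\smat{B_1 & 0 \\ 0 & B_2}$ for some $d\times d$ matrices $A_i,B_i$ over $K$.

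Next I would translate the commutativity square $\phi(d,\lambda)\alpha=\beta\phi(d,\lambda)$ into a matrix identity, keeping in mind that each composition of canonical basis maps $f\itoi{\cdot,\cdot,\cdot,\cdot}$ reduces to the appropriate canonical basis morphism, so the $K$-matrix coefficients are the only nontrivial data. Reading off the four block positions gives $A_1=B_1$, $A_2=B_1$, $A_1=B_2$, and $J_d(\lambda)A_2=B_2 J_d(\lambda)$. The first three collapse all four matrices to a single $A=A_1=A_2=B_1=B_2$, and the last equation then says $AJ_d(\lambda)=J_d(\lambda)A$, so $A$ lies in the centralizer of $J_d(\lambda)$ inside $M_d(K)$. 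That centralizer equals $K[J_d(\lambda)]\cong K[x]/((x-\lambda)^d)$, a commutative local ring. Hence $\End(M(d,\lambda))$ is local, and indecomposability follows from Lemma~\ref{lem:indec_endo}.

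For part (2), the identical analysis applied to $\Hom(\phi(d,\lambda),\phi(d,\lambda'))$ produces a single matrix $A$ satisfying $J_d(\lambda')A=AJ_d(\lambda)$. An isomorphism requires $\alpha,\beta$, and hence $A$, to be invertible, in which case $J_d(\lambda)=A^{-1}J_d(\lambda')A$, so the two Jordan blocks are similar and must share the same eigenvalue, forcing $\lambda=\lambda'$; contrapositively, $\lambda\neq\lambda'$ rules out any isomorphism. The only substantive step in the whole argument is the block computation producing those four equations — everything afterwards is standard linear algebra and the classical fact that $K[J_d(\lambda)]$ is a local ring.
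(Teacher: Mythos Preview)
Your proof is correct and follows essentially the same approach as the paper: pass to $\arr{\rep{\Abar_5}}$ via $F$, use the vanishing of $\Hom$ between $D_1,D_2$ and between $R_1,R_2$ to force the block-diagonal shape of $(\alpha,\beta)$, read off the four matrix equations from commutativity, and reduce to $A$ commuting with $J_d(\lambda)$. The only cosmetic difference is that the paper writes out the upper-triangular Toeplitz form of the centralizer explicitly, whereas you invoke the standard identification of the centralizer with $K[J_d(\lambda)]\cong K[x]/((x-\lambda)^d)$; both routes to locality are equivalent.
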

\begin{proof}
  We check that $\End{M(d,\lambda)}$ is local. By Proposition~\ref{prop:isothm1}, $\End M(d,\lambda) \cong \End\phi(d,\lambda)$.
  Letting $(g_0,g_1)$ be an endomorphism of $\phi(d,\lambda)$, the diagram
  \begin{equation}
    \label{eq:endo_comm}
    \begin{tikzcd}
      \intv[3,4]^d \oplus \intv[2,5]^d \rar{\phi(d,\lambda)} \dar{g_0}&
      \intv[1,4]^d \oplus \intv[2,3]^d \dar{g_1}
      \\
      \intv[3,4]^d \oplus \intv[2,5]^d \rar{\phi(d,\lambda)} &
      \intv[1,4]^d \oplus \intv[2,3]^d
    \end{tikzcd}
  \end{equation}
  commutes.
  Then, $g_0$ and $g_1$ in matrix form with respect to the decompositions are
  \[
    g_0 = \left[
      \begin{array}{cc}
        Af\itoi{3,4,3,4} & 0 \\
        0 & Bf\itoi{2,5,2,5}
      \end{array}
    \right]
    \text{ and }
    g_1 = \left[
      \begin{array}{cc}
        Cf\itoi{1,4,1,4} & 0 \\
        0 & Df\itoi{2,3,2,3}
      \end{array}
    \right]
  \]
  where $A,B,C,D$ are $K$-matrices of size $d\times d$.
  Since there are no nonzero morphisms from $\intv[2,5]$ to $\intv[3,4]$, nor vice versa, the off-diagonal entries of $g_0$ are $0$. Likewise, there are no nonzero morphisms between $\intv[1,4]$ and $\intv[2,3]$ so the off-diagonal entries of $g_1$ are $0$.

  From the commutativity of Eq.~\eqref{eq:endo_comm}, we get the equality
  \[
    \left[
      \begin{array}{cc}
        A f\itoi{3,4,1,4} & A f\itoi{2,5,1,4}\\
        B f\itoi{3,4,2,3} & BJ_d(\lambda) f\itoi{2,5,2,3}
      \end{array}
    \right]
    =
    \left[
      \begin{array}{cc}
        C f\itoi{3,4,1,4} & D f\itoi{2,5,1,4}\\
        C f\itoi{3,4,2,3} & J_d(\lambda)D f\itoi{2,5,2,3}
      \end{array}
    \right]
  \]
  which implies that $A=B=C=D$ and $AJ_d(\lambda) = J_d(\lambda)A$ as $K$-matrices since the morphisms $f\itoi{a,b,c,d}$ appearing above are nonzero. We infer that
  \[
    \End{\phi(d,\lambda)} \cong \{A\in K^{d\times d} \suchthat AJ_d(\lambda) =J_d(\lambda)A\}.
  \]
  A direct computation shows that $A$ is a member of the above ring if and only if $A$ is upper triangular Toeplitz:
  \[
    A = \left[
      \begin{array}{ccccc}
        a_1 & a_2 & \hdots & a_{d-1} &  a_d \\
        0 & a_1 & a_2 & \hdots & a_{d-1} \\
        & & \ddots & \ddots & \vdots \\
        0 &  \hdots & 0 & a_1 & a_2 \\
        0 &  \hdots & 0 & 0 & a_1 \\
      \end{array}
    \right].
  \]
The matrix $A$ is invertible if and only if $a_1$ is nonzero, and so for any $A$, either $A$ or $I-A$ is invertible. Thus, $\End{\phi(d,\lambda)}$ is local and $M(d,\lambda)$ is indecomposable.

  By a similar computation, $M(d,\lambda) \cong M(d,\lambda')$ implies that there is some invertible matrix $A$ such that $AJ_d(\lambda) = J_d(\lambda')A$ which is impossible when $\lambda\neq\lambda'$.
\end{proof}


Proposition~\ref{prop:cl5_example} together with the observation that if $d \neq d'$ then $M(d,\lambda) \not\cong M(d',\lambda')$ provides an easy proof for the following corollary when $\tau = ffff$.
Moreover, the method above can be used to produce similar examples for any orientation $\tau$ on length $n=5$ by finding a similar configuration isomorphic to $\vec{K}_{2,2}$ from the intervals and arcs determined by $\reltoeq$.
As a result, we get a constructive proof of the following result as a corollary.
\begin{corollary}
For any $n\geq 5$ and orientation $\tau$, the commutative ladder
  $\CL_n(\tau)$ is representation infinite.
\end{corollary}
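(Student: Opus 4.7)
The plan is a straightforward reduction to the case $n=5$. For $n=5$, Theorem~\ref{prop:cl5_example} gives the infinite family $\{M(d,0)\}_{d\geq 1}$ of pairwise non-isomorphic indecomposable representations in $\rep \CL_5(ffff)$; the dimension vectors distinguish different $d$'s, and the paper asserts an analogous family for any orientation on $5$ vertices. The remaining content of the corollary is to propagate the construction from $\CL_5$ to $\CL_n$ for $n > 5$.

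I would fix an orientation $\tau$ on $n$ vertices, let $\tau'$ denote its restriction to positions $1,\ldots,5$, and take the family $\{M(d) \in \rep \CL_5(\tau')\}_{d\geq 1}$ produced in the base case. Define $\tilde M(d) \in \rep \CL_n(\tau)$ by extending $M(d)$ by zero: set every vector space at vertices in columns $6,\ldots,n$ to $0$, and all internal maps touching those columns to $0$. Every commutativity square is then trivially satisfied, and the sub-diagram on columns $1,\ldots,5$ recovers $M(d)$.

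Indecomposability and non-isomorphism of the $\tilde M(d)$ both follow by a short restriction argument. In any decomposition $\tilde M(d) = N_1 \oplus N_2$, both $N_1$ and $N_2$ have zero vector spaces at every vertex in columns $6,\ldots,n$, so restricting to columns $1,\ldots,5$ gives a decomposition of $M(d)$ in $\rep \CL_5(\tau')$. Since $M(d)$ is indecomposable, one summand vanishes on columns $1,\ldots,5$ and hence everywhere, proving $\tilde M(d)$ is indecomposable. The same restriction functor sends any isomorphism $\tilde M(d) \cong \tilde M(d')$ to an isomorphism $M(d) \cong M(d')$ in $\rep \CL_5(\tau')$, forcing $d = d'$.

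The main obstacle that the paper only sketches is the ``slight adaptation'' step for length-$5$ orientations other than $ffff$. For each of the sixteen orientations one must exhibit four distinct intervals whose $\reltoeq$-pattern forms $\vec K_{2,2}$, then redo the endomorphism-ring computation of Theorem~\ref{prop:cl5_example} with the replaced Hom basis elements. This is a finite case check rather than a conceptual difficulty --- the upper-triangular Toeplitz structure of the endomorphism ring arises formally once a single Jordan block is placed in one of the four blocks of $\phi(d,\lambda)$, so the real work is the combinatorial verification that a $\vec K_{2,2}$-configuration of intervals exists for every orientation.
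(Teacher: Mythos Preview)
Your proposal is correct and follows the same outline as the paper: use the $\vec K_{2,2}$-configuration argument to handle every orientation of length $5$, then observe that this yields the result for all $n\geq 5$. You are in fact more explicit than the paper, which leaves the passage from $n=5$ to $n>5$ entirely implicit; your extension-by-zero and restriction argument is the standard way to fill that in.
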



We give realizations of our indecomposable persistence modules for $\lambda = 0$, first relying purely on topological spaces and then using a geometric Vietoris-Rips construction.

\subsection{Topological construction}
Given $d\geq 1$, we build a diagram $\mathbb{S}(d)$ of topological spaces and inclusions.
The spaces in the middle column take the form of a sandal consisting of a planar sole and a set of $d$ straps.
Other spaces are either missing some edges or have some faces filled in.
Figure~\ref{fig:sandal_cycles} presents the complete realization.

\begin{figure}[ht]
  \caption{Diagram of spaces; and representatives for homology bases (in color).} \label{fig:sandal_cycles}
  \newcommand\definesandalpts{
  \coordinate (Ia) at (0,1,0);
  \coordinate (Ib) at (0,1,1);
  \coordinate (Sa) at (0,0,0);
  \coordinate (Sb) at (0,0,1);
  \coordinate (Sc) at (1,0,1);
  \coordinate (Sd) at (1,0,0);
  \coordinate (Id) at (1,1,0);

  \coordinate (IaSh) at (0,1,0.2);
  \coordinate (IbSh) at (0,1,0.8);
  \coordinate (SaSh) at (0,0,0.2);
  \coordinate (SbSh) at (0,0,0.8);
  \coordinate (ScSh) at (0.8,0,0.8);
  \coordinate (SdSh) at (0.8,0,0.2);
  \coordinate (IdSh) at (0.8,0.8,0);

  \coordinate (IaEx) at (0,1,-0.2);
  \coordinate (IbEx) at (0,1,1.2);
  \coordinate (SaEx) at (0,0,-0.2);
  \coordinate (SbEx) at (0,0,1.2);
  \coordinate (ScEx) at (1.2,0,1.2);
  \coordinate (SdEx) at (1.2,0,-0.2);
  \coordinate (IdEx) at (1.2,1.2,0);
}

\colorlet{UpperRowColor1}{violet!70!red}
\colorlet{UpperRowColor2}{green!70!black}

\colorlet{LowerRowColor2}{red!70!orange}
\colorlet{LowerRowColor1}{blue!70!violet}

\newcommand{\myscale}{0.6}

\newcommand{\shiftpt}[2]{
  ($ #1 + (0,2*#2,0) $)
}

\newcommand{\drawuphk}[2]{
  \draw[rounded corners, #1] \shiftpt{(Sa)}{#2} -- \shiftpt{(Ia)}{#2} -- \shiftpt{(Ib)}{#2} -- \shiftpt{(Sb)}{#2};
}
\newcommand{\drawdwhk}[2]{
  \draw[rounded corners, #1] \shiftpt{(Sa)}{#2} -- \shiftpt{(Ia)}{#2 - 2}-- \shiftpt{(Ib)}{#2-2} -- \shiftpt{(Sb)}{#2};
}

\newcommand{\drawstrp}[2]{
  \draw[rounded corners, #1] \shiftpt{(Sb)}{#2} -- \shiftpt{(Sc)}{#2} -- \shiftpt{(Sd)}{#2} -- \shiftpt{(Sa)}{#2};
}

\newcommand{\shrunkuphk}[2]{
  \draw[rounded corners, #1] \shiftpt{(SaSh)}{#2} -- \shiftpt{(IaSh)}{#2} -- \shiftpt{(IbSh)}{#2} -- \shiftpt{(SbSh)}{#2};
}
\newcommand{\shrunkdwhk}[2]{
  \draw[rounded corners, #1] \shiftpt{(SaSh)}{#2} -- \shiftpt{(IaSh)}{#2 - 2}-- \shiftpt{(IbSh)}{#2-2} -- \shiftpt{(SbSh)}{#2};
}

\newcommand{\shrunkstrp}[2]{
  \draw[rounded corners, #1] \shiftpt{(SbSh)}{#2} -- \shiftpt{(ScSh)}{#2} -- \shiftpt{(SdSh)}{#2} -- \shiftpt{(SaSh)}{#2};
}

\newcommand{\expanduphk}[3]{
  \draw[rounded corners, #1] \shiftpt{(SaEx)}{#3} -- \shiftpt{(IaEx)}{#2} -- \shiftpt{(IbEx)}{#2} -- \shiftpt{(SbEx)}{#3};
}
\newcommand{\expanddwhk}[3]{
  \draw[rounded corners, #1] \shiftpt{(SaEx)}{#3} -- \shiftpt{(IaEx)}{#2 - 2}-- \shiftpt{(IbEx)}{#2-2} -- \shiftpt{(SbEx)}{#3};
}

\newcommand{\expanduplegs}[3]{
  \draw[#1] \shiftpt{(SaEx)}{#3} -- \shiftpt{(IaEx)}{#2};
  \draw[#1] \shiftpt{(IbEx)}{#2} -- \shiftpt{(SbEx)}{#3};
}
\newcommand{\expanddwlegs}[3]{
  \draw[#1] \shiftpt{(SaEx)}{#3} -- \shiftpt{(IaEx)}{#2 - 2};
  \draw[#1] \shiftpt{(IbEx)}{#2-2} -- \shiftpt{(SbEx)}{#3};
}

\newcommand{\expandstrp}[2]{
  \draw[rounded corners, #1] \shiftpt{(SbEx)}{#2} -- \shiftpt{(ScEx)}{#2} -- \shiftpt{(SdEx)}{#2} -- \shiftpt{(SaEx)}{#2};
}

\centering
\begin{tikzcd}[ampersand replacement = \&, row sep = 1em, column sep = 2.5em,nodes={inner sep = 1.2em}, arrows={hookrightarrow}]
  \begin{tikzpicture}[scale=\myscale,baseline=(current bounding box.center)]
    \definesandalpts
    \drawuphk{black}{3}
    \drawstrp{black}{3}
    \drawuphk{black}{2}
    \drawstrp{black}{2}
    \drawuphk{black}{0}
    \drawstrp{black}{0}
    \node[xshift=1em] at \shiftpt{(Sb)}{1} {$\vdots$};
    \draw[black, thick, rounded corners] \shiftpt{(Ia)}{-1} -- \shiftpt{(Ib)}{-1};
    \path[use as bounding box] \shiftpt{(Id)}{3} rectangle \shiftpt{(Ib)}{-1};

    \shrunkuphk{UpperRowColor1}{3}
    \shrunkstrp{UpperRowColor1}{3}
    \shrunkuphk{UpperRowColor1}{2}
    \shrunkstrp{UpperRowColor1}{2}
    \shrunkuphk{UpperRowColor1}{0}
    \shrunkstrp{UpperRowColor1}{0}

    \node[anchor=west, UpperRowColor1] at \shiftpt{(IbEx)}{3} {$a_1$};
    \node[anchor=west, UpperRowColor1] at \shiftpt{(IbEx)}{2} {$a_2$};
    \node[anchor=west, UpperRowColor1] at \shiftpt{(IbEx)}{0} {$a_d$};
  \end{tikzpicture}
  \rar \&
  \begin{tikzpicture}[scale=\myscale,baseline=(current bounding box.center)]
    \definesandalpts
    \drawuphk{black}{3}
    \drawstrp{black}{3}
    \drawdwhk{black}{3}
    \drawuphk{black}{2}
    \drawstrp{black}{2}
    \drawdwhk{black}{2}
    \drawuphk{black}{0}
    \drawstrp{black}{0}
    \drawdwhk{black}{0}
    \node[xshift=1em] at \shiftpt{(Sb)}{1} {$\vdots$};
    \path[use as bounding box] \shiftpt{(Id)}{3} rectangle \shiftpt{(Ib)}{-1};

    \shrunkuphk{UpperRowColor1}{3}
    \shrunkstrp{UpperRowColor1}{3}
    \shrunkuphk{UpperRowColor1}{2}
    \shrunkstrp{UpperRowColor1}{2}
    \shrunkuphk{UpperRowColor1}{0}
    \shrunkstrp{UpperRowColor1}{0}
    \expandstrp{UpperRowColor2}{3}
    \expanddwhk{UpperRowColor2}{3}{3}
    \expandstrp{UpperRowColor2}{2}
    \expanddwhk{UpperRowColor2}{2}{2}
    \expandstrp{UpperRowColor2}{0}
    \expanddwhk{UpperRowColor2}{0}{0}

    \node[anchor=west, UpperRowColor1] at \shiftpt{(IbEx)}{3} {$a_1$};
    \node[anchor=west, UpperRowColor1] at \shiftpt{(IbEx)}{2} {$a_2$};
    \node[anchor=west, UpperRowColor1] at \shiftpt{(IbEx)}{0} {$a_d$};
    \node[anchor=north, UpperRowColor2] at \shiftpt{(SdSh)}{3} {$a_{d+1}$};
    \node[anchor=north, UpperRowColor2] at \shiftpt{(SdSh)}{2} {$a_{d+2}$};
    \node[anchor=north, UpperRowColor2] at \shiftpt{(SdSh)}{0} {$a_{d+d}$};
  \end{tikzpicture}
  \rar \&
  \begin{tikzpicture}[scale=\myscale,baseline=(current bounding box.center)]
    \definesandalpts
    \drawuphk{black}{3}
    \drawstrp{black}{3}
    \drawdwhk{black}{3}
    \drawuphk{black}{2}
    \drawstrp{black}{2}
    \drawdwhk{black}{2}
    \drawuphk{black}{0}
    \drawstrp{black}{0}
    \drawdwhk{black}{0}
    \node[xshift=1em] at \shiftpt{(Sb)}{1} {$\vdots$};
    \path[use as bounding box] \shiftpt{(Id)}{3} rectangle \shiftpt{(Ib)}{-1};

    \shrunkuphk{UpperRowColor1}{3}
    \shrunkstrp{UpperRowColor1}{3}
    \shrunkuphk{UpperRowColor1}{2}
    \shrunkstrp{UpperRowColor1}{2}
    \shrunkuphk{UpperRowColor1}{0}
    \shrunkstrp{UpperRowColor1}{0}
    \expandstrp{UpperRowColor2}{3}
    \expanddwhk{UpperRowColor2}{3}{3}
    \expandstrp{UpperRowColor2}{2}
    \expanddwhk{UpperRowColor2}{2}{2}
    \expandstrp{UpperRowColor2}{0}
    \expanddwhk{UpperRowColor2}{0}{0}

    \node[anchor=west, UpperRowColor1] at \shiftpt{(IbEx)}{3} {$a_1$};
    \node[anchor=west, UpperRowColor1] at \shiftpt{(IbEx)}{2} {$a_2$};
    \node[anchor=west, UpperRowColor1] at \shiftpt{(IbEx)}{0} {$a_d$};
    \node[anchor=north, UpperRowColor2] at \shiftpt{(SdSh)}{3} {$a_{d+1}$};
    \node[anchor=north, UpperRowColor2] at \shiftpt{(SdSh)}{2} {$a_{d+2}$};
    \node[anchor=north, UpperRowColor2] at \shiftpt{(SdSh)}{0} {$a_{d+d}$};
  \end{tikzpicture}
  \rar \&
  \begin{tikzpicture}[scale=\myscale,baseline=(current bounding box.center)]
    \definesandalpts
    \drawuphk{black}{3}
    \drawstrp{black,fill}{3}
    \drawdwhk{black,fill}{3}
    \drawuphk{black}{2}
    \drawstrp{black,fill}{2}
    \drawdwhk{black,fill}{2}
    \drawuphk{black}{0}
    \drawstrp{black,fill}{0}
    \drawdwhk{black,fill}{0}
    \node[xshift=1em] at \shiftpt{(Sb)}{1} {$\vdots$};
    \path[use as bounding box] \shiftpt{(Id)}{3} rectangle \shiftpt{(Ib)}{-1};

    \shrunkuphk{UpperRowColor1}{3}
    \shrunkstrp{UpperRowColor1}{3}
    \shrunkuphk{UpperRowColor1}{2}
    \shrunkstrp{UpperRowColor1}{2}
    \shrunkuphk{UpperRowColor1}{0}
    \shrunkstrp{UpperRowColor1}{0}

    \node[anchor=west, UpperRowColor1] at \shiftpt{(IbEx)}{3} {$a_1$};
    \node[anchor=west, UpperRowColor1] at \shiftpt{(IbEx)}{2} {$a_2$};
    \node[anchor=west, UpperRowColor1] at \shiftpt{(IbEx)}{0} {$a_d$};
  \end{tikzpicture}
  \rar \&
  \begin{tikzpicture}[scale=\myscale,baseline=(current bounding box.center)]
    \definesandalpts
    \drawuphk{black,fill}{3}
    \drawstrp{black,fill}{3}
    \drawdwhk{black,fill}{3}
    \drawuphk{black,fill}{2}
    \drawstrp{black,fill}{2}
    \drawdwhk{black,fill}{2}
    \drawuphk{black,fill}{0}
    \drawstrp{black,fill}{0}
    \drawdwhk{black,fill}{0}
    \node[xshift=1em] at \shiftpt{(Sb)}{1} {$\vdots$};
    \path[use as bounding box] \shiftpt{(Id)}{3} rectangle \shiftpt{(Ib)}{-1};
  \end{tikzpicture}
  \\
  \begin{tikzpicture}[scale=\myscale,baseline=(current bounding box.center)]
    \definesandalpts
    \draw[fill] \shiftpt{(Sa)}{2} circle (2pt);
    \path[use as bounding box] \shiftpt{(Id)}{3} rectangle \shiftpt{(Ib)}{-1};
  \end{tikzpicture}
  \rar \uar \&
  \begin{tikzpicture}[scale=\myscale,baseline=(current bounding box.center)]
    \definesandalpts
    \drawuphk{black}{3}
    \drawstrp{black}{3}
    \drawdwhk{black}{3}
    \drawuphk{black}{2}
    \drawstrp{black}{2}
    \drawdwhk{black}{2}
    \drawuphk{black}{0}
    \drawstrp{black}{0}
    \drawdwhk{black}{0}
    \node[xshift=1em] at \shiftpt{(Sb)}{1} {$\vdots$};
    \path[use as bounding box] \shiftpt{(Id)}{3} rectangle \shiftpt{(Ib)}{-1};

    \node [coordinate] (Ina) at \shiftpt{(Ia)}{-1.2} {};
    \node [coordinate] (Inb) at \shiftpt{(Ib)}{-0.8} {};

    \node [coordinate] (I0a) at \shiftpt{(Ia)}{-0.2} {};
    \node [coordinate] (I0b) at \shiftpt{(Ib)}{0.2} {};

    \node [coordinate] (I1a) at \shiftpt{(Ia)}{0.8} {};
    \node [coordinate] (I1b) at \shiftpt{(Ib)}{1.2} {};

    \node [coordinate] (I2a) at \shiftpt{(Ia)}{1.8} {};
    \node [coordinate] (I2b) at \shiftpt{(Ib)}{2.2} {};

    \expanduphk{LowerRowColor1}{3}{3.1}
    \expandstrp{LowerRowColor1}{3.1}

    \expandstrp{LowerRowColor1}{2.9}
    \expanddwlegs{LowerRowColor1}{3}{2.9}
    \expanduplegs{LowerRowColor1}{2}{2.1}
    \expandstrp{LowerRowColor1}{2.1}

    \expandstrp{LowerRowColor1}{1.9}
    \expanddwlegs{LowerRowColor1}{2}{1.9}

    \expanduplegs{LowerRowColor1}{0}{0.1}
    \expandstrp{LowerRowColor1}{0.1}
    \draw[fill, white] ($ (I2a)!0.35!(I2b) $) rectangle ($ (I2b)!0.35!(I2a) $);
    \draw[fill, white] ($ (I1a)!0.35!(I1b) $) rectangle ($ (I1b)!0.35!(I1a) $);
    \draw[fill, white] ($ (I0a)!0.35!(I0b) $) rectangle ($ (I0b)!0.35!(I0a) $);
    \draw[fill, white] ($ (Ina)!0.35!(Inb) $) rectangle ($ (Inb)!0.35!(Ina) $);

    \node[anchor=west, LowerRowColor1] at \shiftpt{(IbEx)}{3} {$z_{d+1}$};
    \node[anchor=west, LowerRowColor1] at \shiftpt{(IbEx)}{2} {$z_{d+2}$};
    \node[anchor=west, LowerRowColor1] at \shiftpt{(IbEx)}{0} {$z_{d+d}$};
  \end{tikzpicture}
  \rar \uar \&
  \begin{tikzpicture}[scale=\myscale,baseline=(current bounding box.center)]
    \definesandalpts
    \drawuphk{black}{3}
    \drawstrp{black}{3}
    \drawdwhk{black}{3}
    \drawuphk{black}{2}
    \drawstrp{black}{2}
    \drawdwhk{black}{2}
    \drawuphk{black}{0}
    \drawstrp{black}{0}
    \drawdwhk{black}{0}
    \node[xshift=1em] at \shiftpt{(Sb)}{1} {$\vdots$};
    \path[use as bounding box] \shiftpt{(Id)}{3} rectangle \shiftpt{(Ib)}{-1};

    \expanduphk{LowerRowColor1}{3}{3.1}
    \expandstrp{LowerRowColor1}{3.1}
    \expandstrp{LowerRowColor1}{2.9}
    \expanddwlegs{LowerRowColor1}{3}{2.9}
    \expanduplegs{LowerRowColor1}{2}{2.1}
    \expandstrp{LowerRowColor1}{2.1}
    \expandstrp{LowerRowColor1}{1.9}
    \expanddwlegs{LowerRowColor1}{2}{1.9}
    \expanduplegs{LowerRowColor1}{0}{0.1}
    \expandstrp{LowerRowColor1}{0.1}

    \shrunkuphk{LowerRowColor2}{3}
    \shrunkdwhk{LowerRowColor2}{3}
    \shrunkuphk{LowerRowColor2}{2}
    \shrunkdwhk{LowerRowColor2}{2}
    \shrunkuphk{LowerRowColor2}{0}
    \shrunkdwhk{LowerRowColor2}{0}

    \node[anchor=west, LowerRowColor1] at \shiftpt{(IbEx)}{3} {$z_{d+1}$};
    \node[anchor=west, LowerRowColor1] at \shiftpt{(IbEx)}{2} {$z_{d+2}$};
    \node[anchor=west, LowerRowColor1] at \shiftpt{(IbEx)}{0} {$z_{d+d}$};
    \node[anchor=east, LowerRowColor2] at \shiftpt{(SaSh)}{3} {$z_{1}$};
    \node[anchor=east, LowerRowColor2] at \shiftpt{(SaSh)}{2} {$z_{2}$};
    \node[anchor=east, LowerRowColor2] at \shiftpt{(SaSh)}{0} {$z_{d}$};
  \end{tikzpicture}
  \rar \uar \&
  \begin{tikzpicture}[scale=\myscale,baseline=(current bounding box.center)]
    \definesandalpts
    \drawuphk{black}{3}
    \drawstrp{black}{3}
    \drawdwhk{black}{3}
    \drawuphk{black}{2}
    \drawstrp{black}{2}
    \drawdwhk{black}{2}
    \drawuphk{black}{0}
    \drawstrp{black}{0}
    \drawdwhk{black}{0}
    \node[xshift=1em] at \shiftpt{(Sb)}{1} {$\vdots$};
    \path[use as bounding box] \shiftpt{(Id)}{3} rectangle \shiftpt{(Ib)}{-1};

    \expanduphk{LowerRowColor1}{3}{3.1}
    \expandstrp{LowerRowColor1}{3.1}
    \expandstrp{LowerRowColor1}{2.9}
    \expanddwlegs{LowerRowColor1}{3}{2.9}
    \expanduplegs{LowerRowColor1}{2}{2.1}
    \expandstrp{LowerRowColor1}{2.1}
    \expandstrp{LowerRowColor1}{1.9}
    \expanddwlegs{LowerRowColor1}{2}{1.9}
    \expanduplegs{LowerRowColor1}{0}{0.1}
    \expandstrp{LowerRowColor1}{0.1}

    \shrunkuphk{LowerRowColor2}{3}
    \shrunkdwhk{LowerRowColor2}{3}
    \shrunkuphk{LowerRowColor2}{2}
    \shrunkdwhk{LowerRowColor2}{2}
    \shrunkuphk{LowerRowColor2}{0}
    \shrunkdwhk{LowerRowColor2}{0}

    \node[anchor=west, LowerRowColor1] at \shiftpt{(IbEx)}{3} {$z_{d+1}$};
    \node[anchor=west, LowerRowColor1] at \shiftpt{(IbEx)}{2} {$z_{d+2}$};
    \node[anchor=west, LowerRowColor1] at \shiftpt{(IbEx)}{0} {$z_{d+d}$};
    \node[anchor=east, LowerRowColor2] at \shiftpt{(SaSh)}{3} {$z_{1}$};
    \node[anchor=east, LowerRowColor2] at \shiftpt{(SaSh)}{2} {$z_{2}$};
    \node[anchor=east, LowerRowColor2] at \shiftpt{(SaSh)}{0} {$z_{d}$};
  \end{tikzpicture}
  \rar \uar \&
  \begin{tikzpicture}[scale=\myscale,baseline=(current bounding box.center)]
    \definesandalpts
    \drawuphk{black,fill}{3}
    \drawstrp{black}{3}
    \drawdwhk{black,fill}{3}
    \drawuphk{black,fill}{2}
    \drawstrp{black}{2}
    \drawdwhk{black,fill}{2}
    \drawuphk{black,fill}{0}
    \drawstrp{black}{0}
    \drawdwhk{black,fill}{0}
    \node[xshift=1em] at \shiftpt{(Sb)}{1} {$\vdots$};
    \path[use as bounding box] \shiftpt{(Id)}{3} rectangle \shiftpt{(Ib)}{-1};

    \expanduphk{LowerRowColor1}{3}{3.1}
    \expandstrp{LowerRowColor1}{3.1}
    \expandstrp{LowerRowColor1}{2.9}
    \expanddwlegs{LowerRowColor1}{3}{2.9}
    \expanduplegs{LowerRowColor1}{2}{2.1}
    \expandstrp{LowerRowColor1}{2.1}
    \expandstrp{LowerRowColor1}{1.9}
    \expanddwlegs{LowerRowColor1}{2}{1.9}
    \expanduplegs{LowerRowColor1}{0}{0.1}
    \expandstrp{LowerRowColor1}{0.1}
    \node[anchor=west, LowerRowColor1] at \shiftpt{(IbEx)}{3} {$z_{d+1}$};
    \node[anchor=west, LowerRowColor1] at \shiftpt{(IbEx)}{2} {$z_{d+2}$};
    \node[anchor=west, LowerRowColor1] at \shiftpt{(IbEx)}{0} {$z_{d+d}$};
  \end{tikzpicture}
  \uar
\end{tikzcd}
\end{figure}

The resulting diagram of spaces has maps that are all inclusions, and therefore all squares commute.
Using the singular homology functor with coefficient in field $K$, we obtain a representation $H_1(\mathbb{S}(d))$ of $\CL_5(ffff)$.

\begin{theorem}
$$H_1(\mathbb{S}(d)) \cong M(d,0)$$
\end{theorem}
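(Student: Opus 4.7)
The plan is to compute $H_1$ of each of the ten spaces in $\mathbb{S}(d)$, identify bases matching the cycles $a_i, z_i$ labeled in Figure~\ref{fig:sandal_cycles}, read off the induced linear map for each inclusion, and match the resulting representation of $\CL_5(ffff)$ with $M(d,0)$ entry by entry. Since every arrow in $\mathbb{S}(d)$ is an inclusion, the squares automatically commute on homology, so no further compatibility check is needed once each individual map is pinned down.

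First I would verify $\dim H_1$ at each position via an Euler-characteristic or iterated Mayer--Vietoris computation on the sandal's CW structure, decomposing along the shared I-bars that glue adjacent hook-pair units. The two extremes (a point; a fully filled sandal) give $0$; the top-left space built from $d$ disjoint upper-hook-plus-strap loops gives $K^d$; the middle full sandals reach $K^{2d}$ on the top row, while the corresponding bottom-row spaces have the appropriate cells filled or edges removed so the rank drops to $K^d$ (or stays $K^{2d}$) as dictated by $M(d,0)$; and the rightmost spaces give $K^d$ or $0$. I would then take as basis at each position exactly the cycles of Figure~\ref{fig:sandal_cycles}: $\{a_1,\ldots,a_d\}$ for the upper-hook loops and $\{a_{d+1},\ldots,a_{2d}\}$ for the lower-hook loops on the top row, with the $z$-cycles on the bottom row chosen to thread through two adjacent units by way of a shared I-bar rather than around a single unit.

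With bases fixed, each induced map is a direct matrix reading. Horizontal inclusions send each basis cycle either to the corresponding labeled cycle in the next space or to $0$ if it bounds a newly filled face, recovering $\smat{I\\0},\Id,\smat{I&0}$ on the top row and $\smat{0\\I},\Id,\smat{0&I}$ on the bottom row. For the vertical inclusions the crucial identity is that, pushed into the top space, each $z_i$ is homologous to $a_i + a_{d+i-1}$ in the $a$-basis (with the convention that $a_j = 0$ for indices out of range). Substituting reproduces exactly the vertical blocks $\smat{I\\J_d(0)}$, $\smat{I&I\\I&J_d(0)}$, $\smat{I&I}$; the Jordan cell $J_d(0)$ emerges geometrically from the $z$-cycles straddling two consecutive units, so that in the top space's single-unit $a$-basis they split into an ``upper'' contribution from unit $i$ and a ``lower'' contribution from unit $i-1$.

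The main obstacle is establishing the shift identity $[z_i] = [a_i] + [a_{d+i-1}]$ in the top space. This requires exhibiting an explicit $2$-chain whose boundary is $z_i - a_i - a_{d+i-1}$, making crucial use of cells available in the top space but absent from the bottom. Once this chain-level relation is in hand, all remaining checks -- dimension counts, identification of cycles that become boundaries under filling, and straightforward matrix reading -- are routine, and the identification $H_1(\mathbb{S}(d)) \cong M(d,0)$ follows.
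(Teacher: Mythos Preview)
Your overall approach is the paper's: the authors' entire proof is the single sentence ``relative to the choice of bases indicated in Fig.~\ref{fig:sandal_cycles}, the induced maps have the same matrix forms as the matrices in $M(d,0)$,'' and your plan is a reasonable expansion of that.

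There is, however, an error in the key homology identity you state. You claim a single relation $[z_i]=[a_i]+[a_{d+i-1}]$ valid for all $i$, but no such uniform formula reproduces $\smat{I&I\\I&J_d(0)}$. In the figure there are two genuinely different families of $z$-cycles. The cycles $z_1,\dots,z_d$ are the \emph{sole} loops of a single unit (upper hook $+$ lower hook), and under the vertical inclusion each satisfies $[z_i]=[a_i]+[a_{d+i}]$; this is the first block column $\smat{I\\I}$. The cycles $z_{d+1},\dots,z_{2d}$ are the straddling ones, and the correct relation is $[z_{d+j}]=[a_j]+[a_{d+j-1}]$ (with $[a_{d+0}]=0$, so $[z_{d+1}]=[a_1]$); this is the second block column $\smat{I\\J_d(0)}$. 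Your description ``the $z$-cycles on the bottom row chosen to thread through two adjacent units'' applies only to the second family, not to all $z_i$. Once you separate the two families and write down the two relations above, the rest of your argument goes through exactly as you outlined.
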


\begin{proof}
  Relative to the choice of bases indicated in Fig.~\ref{fig:sandal_cycles}, the induced maps have the same matrix forms as the matrices in $M(d,0)$.
\end{proof}

\subsection{Vietoris-Rips construction}

Next, we use the well-known Vietoris-Rips construction to build
simplicial complexes having the suitable topology. Recall that the
Vietoris-Rips complex $V(P,r)$ is the clique complex of the set of all
edges that can be formed from points $p\in P$ with length less than $2r$.
Note that, for $r \leq r'$, $V(P,r)\subset V(P,r')$, and hence we have a filtration.

In our construction, we provide two different points sets $P_\ell$ and $P_u$ corresponding to the lower and upper rows. The point sets $P_\ell$ and $P_u$ are built by assembling what we call \emph{tiles} in a regular pattern.
Every tile we consider is a planar point set contained within a square with sides of length $5$.
We define three types of tiles: $A$, $B$, $C$, used for $P_u$ and three: $D$, $E$, $F$, used for $P_\ell$. The tile denoted $\bar E$ is obtained by reflection of tile $E$ and we call it the reversed $E$ tile.
In local coordinates, the points in the tiles are displayed in Table~\ref{tab:coordinates}.
\begin{table}[htb]
\caption{Local coordinates of points within tiles.}\label{tab:coordinates}
\begin{center}
\begin{tabular}{|l|c|c|c|c|c|c|c|}
\hline
Index\textbackslash Tile & $A$        & $B$     & $C$       & $D$     & $E$     & $F$       & $\bar{E}$  \\ \hline
1                        &(0, 0)			&(0, 0)		&(0, 0)			&(0, 0)		&(0, 0)		&(0, 0)			&(0,0) \\
2                        &(0, 5)			&(0, 5)		&(0, 5)			&(0, 5)		&(0, 5)		&(0, 5)			&(0,5) \\
3                        &(5, 5)			&(5, 5)		&(5, 5)			&(5, 5)		&(5, 5)		&(5, 5)			&(5,5) \\
4                        &(5, 0)			&(5, 0)		&(5, 0)			&(5, 0)		&(5, 0)		&(5, 0)			&(5,0) \\ \hline
5                        &(1.5, 0)		&(1.3, 0)	&(.7, 0)		&(1.3, 0)	&(1.3, 0)	&(.5, 0)		&(1.3,0) \\
6                        &(3.5, 0)		&(3.7, 0)	&(4.3, 0)		&(3.7, 0)	&(3.7, 0)	&(4.5, 0)		&(3.7,0) \\ \hline
7                        &(1.5, 5)		&(1.3, 5)	&(1.5, 5)		&(1.3, 5)	&(1.3, 5)	&(.7, 5)		&(1.3,5) \\
8                        &(3.5, 5)		&(3.7, 5)	&(3.5, 5)		&(3.7, 5)	&(3.7, 5)	&(4.3, 5)		&(3.7,5) \\ \hline
9                        &(0, 1.5)		&(0, 0.7)	&(0, 1.5)		&(0, 1.3)	&(0, .5)	&(0, 1.3)		&(0,1) \\
10                       &(0, 3.5)		&(0, 4.3)	&(0, 3.5)		&(0, 3.7)	&(0, 4.5)	&(0, 3.7)		&(0,4) \\ \hline
11                       &(5, 0.7)		&(5, 1.5)	&(5, 1.5)		&(5, .5)	&(5, 1)		&(5, 1.3)		&(5,.5) \\
12                       &(5, 4.3)		&(5, 3.5)	&(5, 3.5)		&(5, 4.5)	&(5, 4)		&(5, 3.7)		&(5,4.5) \\ \hline
13                       &(1.86, 2.5)	&(3, 2.5)	&(2.5, 2.9)	&(2, 2.5)	&(3, 2.5)	&(2.5, 3.5)	&(2,2.5) \\ \hline
\end{tabular}
\end{center}
\end{table}

These tiles are arranged as in Fig.~\ref{fig:building_plans} to obtain $P_u$ and $P_\ell$, via the union (sharing coinciding points along the shared edges) of translations of the tiles $A$, $B$, $D$, $E$, $\bar{E}$, and $90^\circ$ rotation and translations of tiles $C$ and $F$. Note that the lower row presents an asymmetry as the $D$ tile is only used once.

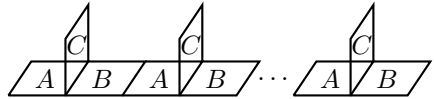
\begin{figure}[ht!]
  \caption{Pattern for assembling of the tiles.} \label{fig:building_plans}
  \begin{center}
    \begin{subfigure}[t]{0.5\textwidth}
      \begin{center}
        \begin{tikzpicture}[scale=.15]
          \draw[thick] (0,0) -- (5,0) -- (7,3) -- (2,3) -- (0,0);
          \draw[thick] (5,0) -- (10,0) -- (12,3) -- (7,3);
          \draw[thick] (10,0) -- (15,0) -- (17,3) -- (12,3);
          \draw[thick] (15,0) -- (20,0) -- (22,3) -- (17,3);
          \draw[thick] (5,0) -- (5,5) -- (7,8) -- (7,3);
          \draw[thick] (15,0) -- (15,5) -- (17,8) -- (17,3);

          \draw[thick] (25,0) -- (30,0) -- (32,3) -- (27,3) -- (25,0);
          \draw[thick] (30,0) -- (35,0) -- (37,3) -- (32,3);
          \draw[thick] (30,0) -- (30,5) -- (32,8) -- (32,3);
          \draw (28.2,1.5) node {$A$};
          \draw (33.2,1.5) node {$B$};
          \draw (31,4.5) node {$C$};

          \draw (23.2,1.5) node {$\hdots$};

          \draw (3.2,1.5) node {$A$};
          \draw (8.2,1.5) node {$B$};
          \draw (13.2,1.5) node {$A$};
          \draw (18.2,1.5) node {$B$};
          \draw (6,4.5) node {$C$};
          \draw (16,4.5) node {$C$};
        \end{tikzpicture}
      \end{center}
      \caption{For upper row $P_u$}
    \end{subfigure}
    \\\vspace{1em}
    \begin{subfigure}[t]{0.5\textwidth}
      \begin{center}
        \begin{tikzpicture}[scale=.15]
          \draw[thick] (0,0) -- (5,0) -- (7,3) -- (2,3) -- (0,0);
          \draw[thick] (5,0) -- (10,0) -- (12,3) -- (7,3);
          \draw[thick] (10,0) -- (15,0) -- (17,3) -- (12,3);
          \draw[thick] (15,0) -- (20,0) -- (22,3) -- (17,3);
          \draw[thick] (5,0) -- (5,5) -- (7,8) -- (7,3);
          \draw[thick] (15,0) -- (15,5) -- (17,8) -- (17,3);

          \draw[thick] (25,0) -- (30,0) -- (32,3) -- (27,3) -- (25,0);
          \draw[thick] (30,0) -- (35,0) -- (37,3) -- (32,3);
          \draw[thick] (30,0) -- (30,5) -- (32,8) -- (32,3);
          \draw (28.2,1.5) node {$\bar E$};
          \draw (33.2,1.5) node {$E$};
          \draw (31,4.5) node {$F$};

          \draw (23.2,1.5) node {$\hdots$};

          \draw (3.2,1.5) node {$D$};
          \draw (8.2,1.5) node {$E$};
          \draw (13.2,1.5) node {$\bar E$};
          \draw (18.2,1.5) node {$E$};
          \draw (6,4.5) node {$F$};
          \draw (16,4.5) node {$F$};
        \end{tikzpicture}
      \end{center}
      \caption{For lower row $P_\ell$}
    \end{subfigure}
  \end{center}
\end{figure}

Each tile has $13$ points: one at each corner, two on the interior of each edge, and one in the interior of the tile. Via the index in Table~\ref{tab:coordinates}, we have one-to-one correspondences between points of all tiles.  We define a vertex map $f: P_\ell \rightarrow P_u$ via this correspondence, as follows. Each point $p \in P_\ell$ is a point in a copy of tile $D$, $E$, $\bar{E}$, or $F$, as in Fig.~\ref{fig:building_plans}. Then, $p$ is mapped to $f(p) \in P_u$ defined to be the corresponding point with the same index in Table~\ref{tab:coordinates} in the corresponding copy of tile $A$, $B$, or $C$. Note that along shared edges, there may be ambiguities as to which tile $D$, $E$, $\bar{E}$, or $F$ a point $p\in P_\ell$ is coming from. However, it can be checked that the corresponding point $f(p)$ is well-defined.

We then choose five radius parameters $r_1, \hdots, r_5$ so that $x_i < r_i < y_i$  for each $i$, with the bounds $(x_i,y_i)$ described in Table~\ref{tab:range}.
The values of $x_i$ and $y_i$ are chosen such that for all $x_i < r, r' < y_i$, $V(P_u,r)=V(P_u,r')$ and $V(P_\ell,r)=V(P_\ell,r')$.
\begin{table}[htb]
\caption{Ranges for the choice of parameters.}\label{tab:range}
\begin{center}
\begin{tabular}{|l|c|c|c|c|c|}
\hline
Parameter & $r_1$ & $r_2$ & $r_3$ & $r_4$ & $r_5$ \\ \hline
$x_i$ & 1.06 & 1.21 & 1.6 & 1.8 & 2\\
$y_i$ & 1.12 & 1.25 & 1.665 & 1.805 & 2.015\\ \hline
\end{tabular}
\end{center}
\end{table}

We construct the diagram
\begin{equation}
  \label{eq:vr_realization}
  \begin{tikzcd}[ampersand replacement=\&]
    V(P_u,r_1)\rar \&
    V(P_u,r_2)\rar \&
    V(P_u,r_3)\rar \&
    V(P_u,r_4)\rar \&
    V(P_u,r_5)
    \\
    V(P_\ell,r_1)\rar \uar{f_1} \&
    V(P_\ell,r_2)\rar \uar{f_2} \&
    V(P_\ell,r_3)\rar \uar{f_3} \&
    V(P_\ell,r_4)\rar \uar{f_4} \&
    V(P_\ell,r_5) \uar{f_5}
  \end{tikzcd}
\end{equation}
where it can be checked that for $i=1,2,3,4,5$, the vertex map $f$ restricted to the vertices in $V(P_\ell,r_i)$, defines a simplicial map $f_i : V(P_\ell,r_i)\rightarrow V(P_u,r_i)$. After identification of corresponding vertices by $f$, these are subcomplex inclusions (of abstract simplicial complexes).
We note that the fact that we have subcomplex inclusions is only valid for those chosen radii, and not accross the whole range of possible radii. Then, Diagram~\ref{eq:vr_realization} has the same homology as $\mathbb{S}(d)$

Below, we provide illustrations of the Vietoris-Rips complexes restricted to the tiles, for the chosen radii.
\FloatBarrier
\begin{figure}[htb!]
  \caption{Tile A}
  \begin{center}
  \begin{tikzpicture}[scale=0.34]
    \Atile{0}{0}{0}{0}
    \Atile{7}{0}{0}{1}
    \Atile{14}{0}{0}{2}
    \Atile{21}{0}{0}{3}
    \Atile{28}{0}{0}{4}
    \Atile{35}{0}{0}{5}
  \end{tikzpicture}
\end{center}
\end{figure}

\begin{figure}[htb!]
  \caption{Tile B}
  \begin{center}
  \begin{tikzpicture}[scale=0.34]
    \Btile{0}{0}{0}{0}
    \Btile{7}{0}{0}{1}
    \Btile{14}{0}{0}{2}
    \Btile{21}{0}{0}{3}
    \Btile{28}{0}{0}{4}
    \Btile{35}{0}{0}{5}
  \end{tikzpicture}
\end{center}
\end{figure}

\begin{figure}[htb!]
  \caption{Tile C}
  \begin{center}
  \tdplotsetmaincoords{90}{270}
  \begin{tikzpicture}[scale=0.34, tdplot_main_coords]
    \Ctile{0}{35}{0}{0}
    \Ctile{0}{28}{0}{1}
    \Ctile{0}{21}{0}{2}
    \Ctile{0}{14}{0}{3}
    \Ctile{0}{7}{0}{4}
    \Ctile{0}{0}{0}{5}
  \end{tikzpicture}
\end{center}
\end{figure}

\begin{figure}[htb!]
  \caption{Tile D}
  \begin{center}
  \begin{tikzpicture}[scale=0.34]
    \Dtile{0}{0}{0}{0}
    \Dtile{7}{0}{0}{1}
    \Dtile{14}{0}{0}{2}
    \Dtile{21}{0}{0}{3}
    \Dtile{28}{0}{0}{4}
    \Dtile{35}{0}{0}{5}
  \end{tikzpicture}
\end{center}
\end{figure}

\begin{figure}[htb!]
  \caption{Tile E}
  \begin{center}
  \begin{tikzpicture}[scale=0.34]
    \Etile{0}{0}{0}{0}
    \Etile{7}{0}{0}{1}
    \Etile{14}{0}{0}{2}
    \Etile{21}{0}{0}{3}
    \Etile{28}{0}{0}{4}
    \Etile{35}{0}{0}{5}
  \end{tikzpicture}
\end{center}
\end{figure}

\begin{figure}[htb!]
  \caption{Tile E reversed}
  \begin{center}
  \begin{tikzpicture}[scale=0.34]
    \EtileR{0}{0}{0}{0}
    \EtileR{7}{0}{0}{1}
    \EtileR{14}{0}{0}{2}
    \EtileR{21}{0}{0}{3}
    \EtileR{28}{0}{0}{4}
    \EtileR{35}{0}{0}{5}
  \end{tikzpicture}
\end{center}
\end{figure}
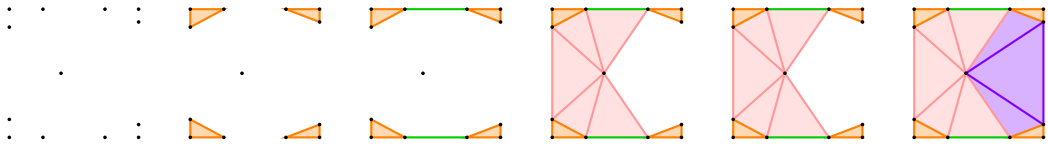

\begin{figure}[htb!]
  \caption{Tile F}
  \begin{center}
  \tdplotsetmaincoords{90}{270}
  \begin{tikzpicture}[tdplot_main_coords,scale=0.34]
    \Ftile{0}{35}{0}{0}
    \Ftile{0}{28}{0}{1}
    \Ftile{0}{21}{0}{2}
    \Ftile{0}{14}{0}{3}
    \Ftile{0}{7}{0}{4}
    \Ftile{0}{0}{0}{5}
  \end{tikzpicture}
\end{center}
\end{figure}
\FloatBarrier

The resulting Diagram~\ref{eq:vr_realization}  with $d=2$ is displayed in Figure~\ref{fig:complete}.
For clarity of the illustration, we omit some extra edges and triangles that do not affect the homology.

\begin{figure}[htb!]
  \caption{Complete realization, $d=2$ case}\label{fig:complete}
  \tdplotsetmaincoords{60}{-25}
  \centering
  \begin{tikzpicture}[tdplot_main_coords,scale=.18, rotate=270]
    \sandalup{2}{0}{0}{0}{1}
    \draw[thick,->] (10,2.5,5) -- (10,2.5,11);
    \sandalup{2}{0}{0}{15}{2}
    \draw[thick,->] (10,2.5,20) -- (10,2.5,26);
    \sandalup{2}{0}{0}{30}{3}
    \draw[thick,->] (10,2.5,35) -- (10,2.5,41);
    \sandalup{2}{0}{0}{45}{4}
    \draw[thick,->] (10,2.5,50) -- (10,2.5,56);
    \sandalup{2}{0}{0}{60}{5}

    \sandallow{2}{25}{-10}{0}{1}
    \draw[thick,->] (35,-7.5,5) -- (35,-7.5,11);
    \sandallow{2}{25}{-10}{15}{2}
    \draw[thick,->] (35,-7.5,20) -- (35,-7.5,26);
    \sandallow{2}{25}{-10}{30}{3}
    \draw[thick,->] (35,-7.5,35) -- (35,-7.5,41);
    \sandallow{2}{25}{-10}{45}{4}
    \draw[thick,->] (35,-7.5,50) -- (35,-7.5,56);
    \sandallow{2}{25}{-10}{60}{5}

    \draw[thick,->] (21,-8.5,0) -- (18,-7,0);
    \draw[thick,->] (21,-8.5,15) -- (18,-7,15);
    \draw[thick,->] (21,-8.5,30) -- (18,-7,30);
    \draw[thick,->] (21,-8.5,45) -- (18,-7,45);
    \draw[thick,->] (21,-8.5,60) -- (18,-7,60);
  \end{tikzpicture}
\end{figure}
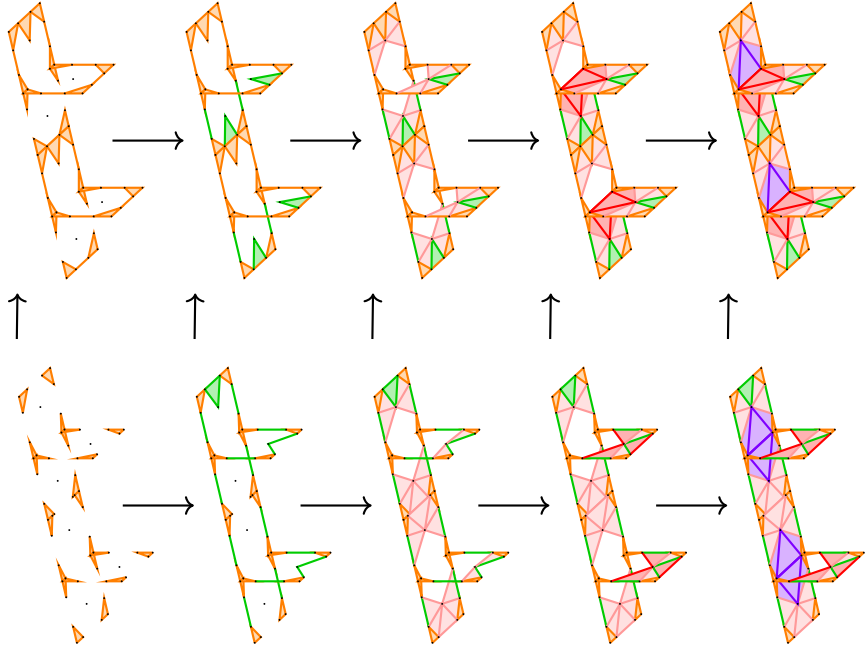

Importantly, our construction does not rely on degeneracy.
With $P_\ell$ and $P_u$ fixed, we can freely choose the radius parameters $r_i$ within intervals $I_i=(x_i,y_i)$ of non-zero width where there are no changes in the filtration.
Dually, small perturbations of the point sets do not change the homology of the construction.
Formally we have,

\begin{lemma}
  Let $\rho$ be the minimum of the diameters of $I_i$, and fix each $r_i$ to be the center of $I_i$, for $i=1,\hdots,5$.
  Replacing each point of our input by a point within a ball of radius $\frac{\rho}{2}$ around it does not change the topology.
\end{lemma}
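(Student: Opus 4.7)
The plan is to show that perturbing each point by at most $\rho/2$ leaves every simplicial complex in the diagram \eqref{eq:vr_realization} unchanged (as an abstract simplicial complex, under the canonical identification of each perturbed point with its original); once this is established, the vertical simplicial maps $f_i$ and horizontal inclusions automatically persist, so the induced diagram in homology is identical and in particular the topology is preserved.

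The first step is a distance-stability observation: if every point is moved by at most $\rho/2$, then by the triangle inequality every pairwise distance $d(p,q)$ changes by at most $\rho$, and consequently the Vietoris--Rips appearance time $\alpha = d(p,q)/2$ of each edge changes by at most $\rho/2$.

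The second step exploits the choice of the $r_i$. Since $r_i$ is the midpoint of $I_i = (x_i, y_i)$ and $y_i - x_i \geq \rho$ by definition of $\rho$, the interval $[r_i - \rho/2,\, r_i + \rho/2]$ is contained in $\overline{I_i}$. The defining property of $I_i$ is that both $V(P_u, \cdot)$ and $V(P_\ell, \cdot)$ are constant on $I_i$, which means no appearance time $\alpha$ lies in $I_i$; hence every $\alpha$ satisfies $\alpha \leq x_i$ or $\alpha \geq y_i$. Combining with the first step, any perturbed appearance time $\alpha'$ satisfies $\alpha' \leq x_i + \rho/2 \leq r_i$ in the first case and $\alpha' \geq y_i - \rho/2 \geq r_i$ in the second. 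Thus $\{p,q\}$ is an edge of $V(P', r_i)$ if and only if it is an edge of $V(P, r_i)$; since Vietoris--Rips complexes are clique complexes, equality of $1$-skeleta gives equality of complexes, which finishes the combinatorial argument. The fact that $f$ restricts to a simplicial map on each level then transfers verbatim because the simplices are the same.

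The main technical nuisance I anticipate is the boundary case where an appearance time could reach $r_i$ exactly (which would require $\alpha \in \{x_i, y_i\}$ together with a displacement attaining the maximal $\rho/2$), since the strict inequality used in the Vietoris--Rips definition could then flip. I would resolve this either by interpreting ``within a ball of radius $\rho/2$'' as an open ball so that $|\alpha - \alpha'| < \rho/2$ strictly, or by observing that the numerical values of $x_i, y_i$ in Table~\ref{tab:range} are not themselves attained by pairwise half-distances of $P_u$ or $P_\ell$; either reduction forces $\alpha'$ strictly on the correct side of $r_i$.
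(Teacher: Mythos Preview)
Your proof is correct and follows essentially the same approach as the paper: both argue that since $r_i$ is the center of an interval $I_i$ containing no edge-appearance times and of width at least $\rho$, a perturbation that changes every pairwise distance by at most $\rho$ cannot move any edge length across the threshold $2r_i$. Your version is more careful---explicitly invoking the clique property, noting that the simplicial maps $f_i$ persist, and addressing the boundary case---while the paper's proof is a three-line sketch that omits these details.
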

\begin{proof}
By construction, for any $i$, there are no edges of length $l$ such that $2r_i-\rho< l < 2r_i+\rho$.

We now replace every point of $P$ by a point located within distance $\frac{\rho}{2}$.
Note that the pairwise distance after this addition of noise are modified by at most $\rho$.
Therefore, the complexes for radii $r_i$ are unaffected as no pairwise distance can cross that threshold.
\end{proof}


\section{Commutative cube}

The commutative cube $C$ is defined to be the quiver
\[
  \begin{tikzcd}[row sep=1em, column sep=1em]
    & 3'
    \ar{rr}{}
    &
    & 4'
    \\
    3
    \ar{ur}{}
    \ar{rr}{}
    &
    &
    4
    \ar{ur}{}
    \\
    & 1'
    \ar{rr}
    \ar{uu}{}
    &
    & 2'
    \ar{uu}{}
    \\
    1
    \ar{uu}{}
    \ar{ur}{}
    \ar{rr}{}
    &
    & 2
    \ar{uu}{}
    \ar{ur}{}
  \end{tikzcd}
\]
bound by commutativity relations. Similar to Lemma~\ref{prop:isothm1}, it can be checked that
$\rep{C} \cong \arr{\rep{\CL_2(f)}}$.
Thus, we write a representation of $C$ as a morphism between representations of $\CL_2(f)$ by taking the morphism from the front face to the back face.

\subsection{Algebraic construction}

In $\rep(\CL_2(f))$, an analogue of Lemma~\ref{lemma:homdim} does not hold. In particular, the indecomposables $I_1$, $I_2$ in $\rep(\CL_2(f))$ given by:
\[
  I_1 :
  \begin{tikzcd}
    K \rar{1} & K \\
    0 \uar \rar & K \uar{1}
  \end{tikzcd}
  \text{ and }
  I_2 :
  \begin{tikzcd}
    K \rar & 0 \\
    K \uar{1} \rar{1} & K \uar
  \end{tikzcd}
\]
have $\dim\Hom(I_1,I_2) = 2$. The vector space $\Hom(I_1,I_2)$ can be given the basis $\{f_2,
f_3\}$, where $f_2$ is the identity on the lower right corner and zero
elsewhere, and $f_3$ is the identity on the upper left corner and zero
elsewhere.

The Auslander-Reiten quiver of $\CL_2(f)$, with chosen indecomposables encircled, is given by
\[
  \begin{tikzpicture}[baseline=(A.center),
    every node/.append style={anchor=center,text depth=1ex,text
      height=2.2ex,text width=3.2ex,inner sep=0mm}]
    \matrix (A) [matrix of math nodes, column sep= 6 mm, row sep = 6mm, inner sep = 0mm,
    nodes in empty cells, scale = 1]
    {
      & \dimVecF{0,0,1,1} &                  & \dimVecF{0,1,0,0}      &                  & \dimVecF{1,0,1,0} &
      \\
      \dimVecF{0,0,0,1} &                  & \dimVecF{0,1,1,1} & \dimVecF{1,1,1,1}      & \dimVecF{1,1,1,0} &                  & \dimVecF{1,0,0,0}
      \\
      & \dimVecF{0,1,0,1} &                  & \dimVecF{0,0,1,0}      &                  & \dimVecF{1,1,0,0} &
      \\
    };

    \draw[red] (A-2-3) circle (3mm);
    \draw[red] (A-2-5) circle (3mm);

    \foreach \y in {4,5}{
      \pgfmathtruncatemacro{\x}{\y-1};
      \path [-stealth] (A-2-\x) edge (A-2-\y);
    }

    \foreach \y in {2,4,6}{
      \pgfmathtruncatemacro{\x}{\y-1};
      \path [-stealth] (A-2-\x) edge (A-1-\y);
      \path [-stealth] (A-2-\x) edge (A-3-\y);
    }

    \foreach \y in {2,4,6}{
      \pgfmathtruncatemacro{\x}{\y+1};
      \path [-stealth] (A-1-\y) edge (A-2-\x);
      \path [-stealth] (A-3-\y) edge (A-2-\x);
    }
  \end{tikzpicture}.
\]
Locating the $I_1$ and $I_2$ indecomposables and maps $f_2,f_3:I_1\rightarrow I_2$ employed in the construction,
we see that $f_2$ is the composition of the morphisms on the upper path, while $f_3$ is of those on the lower path from $I_1$ to $I_2$ in the Auslander-Reiten quiver above.

Intuitively, we see that this is related to representations of the \emph{Kronecker quiver}
$Q_2:
  \begin{tikzcd}
    1 \rar[shift left]\rar[shift right] & 2
  \end{tikzcd}$
by thinking about the two arrows as the linearly independent $f_2$, $f_3$.
This statement can be made precise by the following.
\begin{theorem}
  \label{th:cube}
  There is a fully faithful $K$-functor
  $\theta : \rep{Q_2} \rightarrow \rep{C}$
  that preserves indecomposability and isomorphism classes,
  where $\theta$ takes a representation
  $V:
    \begin{tikzcd}
      V_1 \rar[shift left]{g_1} \rar[shift right,swap]{g_2} & V_2
    \end{tikzcd}
  $
  to
  \[
    \theta(V):
    \begin{tikzcd}[row sep=1em, column sep=1em]
      & V_2 \ar{rr}{} & & 0
      \\
      V_1 \ar{ur}{g_2} \ar{rr}[near end]{1} & & V_1 \ar{ur} &
      \\
      & V_2 \ar{rr}[near start]{1} \ar{uu}[near start]{1} & & V_2 \ar{uu}{}
      \\
      0 \ar{uu}{} \ar{ur}{} \ar{rr}{} & & V_1 \ar{uu}[near start]{1} \ar[swap]{ur}{g_1} &
    \end{tikzcd}
  \]
  and a morphism $\phi= (\phi_1,\phi_2):V\rightarrow W$
  to
  $
    (0,\phi_1,\phi_1,\phi_1,\phi_2,\phi_2,\phi_2,0) : \theta(V) \rightarrow \theta(W)
  $,
  where these maps are specified for the vertices in the order $1,\hdots,4,1',\hdots,4'$.
\end{theorem}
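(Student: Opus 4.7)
The plan is to establish the stated properties of $\theta$ in the natural order: (i) well-definedness on objects, (ii) well-definedness on morphisms, (iii) functoriality and $K$-linearity, (iv) faithfulness, (v) fullness, and finally (vi) preservation of indecomposability and isomorphism classes, deducing (vi) formally from (iv) and (v).

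For (i), I would verify each of the six face-commutativity relations of $C$ for $\theta(V)$. Four of the six faces — the front, the back, the bottom and the top — are immediate because one of vertices $1$ or $4'$ (both set to $0$) sits as the source or sink of both composites in the square. The two remaining squares (left and right) reduce to trivial identities, since the composites $1\to 3\to 3'$ and $1\to 1'\to 3'$ (resp.\ $2\to 4\to 4'$ and $2\to 2'\to 4'$) both factor through either $0$ or end in $0$. For (ii), given $\phi=(\phi_1,\phi_2):V\to W$ in $\rep Q_2$ (so $\phi_2 g_i = h_i\phi_1$ for $i=1,2$), I would check the twelve arrow squares for $\theta(\phi)$; squares containing a $0$ vertex are automatic, the squares along identity-labelled arrows become $\phi_j\cdot 1 = 1\cdot \phi_j$, and the only nontrivial squares are exactly along $2\to 2'$ and $3\to 3'$, where commutativity becomes precisely the Kronecker relations $\phi_2 g_i = h_i\phi_1$. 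Step (iii) is immediate from the componentwise definition.

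Faithfulness (iv) is immediate: if $\theta(\phi)=0$, its components at vertex $2$ and vertex $1'$ are $\phi_1$ and $\phi_2$, which therefore vanish. The substantive step is fullness (v). Given $\psi:\theta(V)\to\theta(W)$ with components $\psi_v$, the components $\psi_1$ and $\psi_{4'}$ must vanish because source or target is $0$. The commutativity along the arrows $2\to 4$ and $3\to 4$ (both carrying identity maps on $V_1$ and $W_1$) forces $\psi_2=\psi_3=\psi_4$; symmetrically, the arrows $1'\to 2'$ and $1'\to 3'$ force $\psi_{1'}=\psi_{2'}=\psi_{3'}$. Define $\phi_1$ and $\phi_2$ as these common values. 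The remaining nontrivial commutativity squares along $2\to 2'$ and $3\to 3'$ then read $\phi_2 g_1 = h_1\phi_1$ and $\phi_2 g_2 = h_2 \phi_1$, so $\phi=(\phi_1,\phi_2)$ is a morphism in $\rep Q_2$ and $\theta(\phi)=\psi$ by construction.

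For (vi), since $\theta$ is fully faithful and $K$-linear, it induces a $K$-algebra isomorphism $\End(V)\cong \End(\theta(V))$, so Lemma~\ref{lem:indec_endo} (applied on both sides, noting that $\theta(V)$ is finite-dimensional iff $V$ is) gives that one endomorphism ring is local iff the other is, hence $V$ is indecomposable iff $\theta(V)$ is. Preservation of isomorphism classes is standard: an isomorphism $\psi:\theta(V)\to\theta(W)$ lifts by fullness to some $\phi:V\to W$; $\psi^{-1}$ lifts to some $\phi'$; composing and using faithfulness to cancel $\theta$ shows $\phi\phi'$ and $\phi'\phi$ are identities, so $V\cong W$. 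I expect the main obstacle to be purely bookkeeping — carefully extracting the constraints from all twelve arrows in the fullness step without missing or double-counting any — rather than any conceptual subtlety.
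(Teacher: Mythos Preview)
Your proposal is correct and follows essentially the same approach as the paper's proof: the paper also dispatches functoriality as routine, proves faithfulness by reading off the components, proves fullness by using the identity-labelled arrows to force $\psi_2=\psi_3=\psi_4$ and $\psi_{1'}=\psi_{2'}=\psi_{3'}$ and then extracting the Kronecker relations from the $g_1,g_2$ arrows, and deduces preservation of indecomposability and isomorphism classes from the induced ring isomorphism $\End(V)\cong\End(\theta(V))$ together with Lemma~\ref{lem:indec_endo}. Your treatment is in fact slightly more careful (you explicitly verify the face relations for well-definedness and note the finite-dimensionality needed to invoke both directions of Lemma~\ref{lem:indec_endo}), but there is no substantive difference in strategy.
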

\begin{proof}
  That $\theta$ is a $K$-functor is easy to check.
  By the definition, $\theta(\phi) = 0$ implies that $\phi = 0$. Thus, $\theta$ is faithful.  To see that $\theta$ is full, let $V$ be as above, $W:
    \begin{tikzcd}
      W_1 \rar[shift left]{g'_1} \rar[shift right,swap]{g'_2} & W_2
    \end{tikzcd}
  $ and
  \[
    \alpha = (\alpha_1,\alpha_2,\alpha_3,\alpha_4,\alpha_{1'},\alpha_{2'},\alpha_{3'},\alpha_{4'}) : \theta(V) \rightarrow \theta(W)
  \]
  be a morphism. Then,
  $\alpha_1 : 0 \rightarrow 0$ and $\alpha_{4'}: 0\rightarrow 0$
  are zero maps by the forms of
  $\theta(V)$ and $\theta(W)$. The commutativity requirements for
  morphisms imply that $\alpha_2 = \alpha_3 =
  \alpha_4$ and $\alpha_{2'} = \alpha_{3'} = \alpha_{4'}$, and
  furthermore, $\alpha_{3'}g_2 = g'_2\alpha_3$ and $\alpha_{2'}g_1 =
  g'_1\alpha_2$. Thus, $(\alpha_{2}, \alpha_{2'})$ is a morphism from
  $V$ to $W$ such that $\theta((\alpha_{2}, \alpha_{2'})) = \alpha$.

  Let $V \in \rep{Q_2}$ be indecomposable. By part 2 of Lemma~\ref{lem:indec_endo}, $\End V$ is local. By the above result that $\theta$ is fully faithful, $\End \theta(V) \cong \End V$, and so $\End \theta(V)$ is local. Therefore, $\theta(V) \in \rep{C}$ is indecomposable by part 1 of Lemma~\ref{lem:indec_endo}.

  If $\alpha:\theta(V) \rightarrow \theta(W)$ is an isomorphism with
  inverse $\beta$ then $(\alpha_2,\alpha_{2'}):V\rightarrow W$ is an
  isomorphism with inverse $(\beta_2,\beta_{2'})$. Thus, if $\theta(V)
  \cong \theta(W)$, then $V\cong W$. This shows that $\theta$
  preserves isomorphism classes.
\end{proof}

The quiver $Q_2$ is representation infinite, and its indecomposable representations are well-known. See for example Proposition~1.6 of \cite{barotbook}. In particular,  one example of an infinite family is the regular indecomposables
$
  R_n(\lambda):
  \begin{tikzcd}
    K^n \rar[shift left]{I} \rar[shift right,swap]{J_n(\lambda)} & K^{n}
  \end{tikzcd}
$
for $n\geq 0$ and $\lambda\in K$. The following corollary is immediate from Theorem~\ref{th:cube}.
\begin{corollary}
  The commutative cube $C$ is representation infinite.
\end{corollary}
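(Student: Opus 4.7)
The plan is to transport the representation infiniteness of the Kronecker quiver $Q_2$ into $\rep{C}$ via the functor $\theta$ constructed in Theorem~\ref{th:cube}. Concretely, the corollary follows once I exhibit an infinite family of pairwise non-isomorphic indecomposables in $\rep{C}$, and the functor $\theta$ hands this to me almost for free.

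First, I would invoke the well-known fact that the regular representations $R_n(\lambda)$ of $Q_2$ (for fixed $n \geq 1$ and varying $\lambda \in K$, with $K$ infinite; or equivalently for varying $n \geq 1$ over any field) form an infinite family of indecomposables with $R_n(\lambda) \not\cong R_n(\lambda')$ when $\lambda \neq \lambda'$. This is the content of the reference to Proposition~1.6 of \cite{barotbook} already cited right before the corollary, so no new work is needed here.

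Next I would apply $\theta$ to this family to obtain the representations $\theta(R_n(\lambda)) \in \rep{C}$. By Theorem~\ref{th:cube}, $\theta$ preserves indecomposability, so each $\theta(R_n(\lambda))$ is indecomposable. Since $\theta$ also preserves isomorphism classes, $R_n(\lambda) \not\cong R_n(\lambda')$ implies $\theta(R_n(\lambda)) \not\cong \theta(R_n(\lambda'))$. Therefore $\{\theta(R_n(\lambda))\}_{\lambda \in K}$ (or, if one prefers a field-independent argument, the family $\{\theta(R_n(0))\}_{n \geq 1}$, whose members have distinct total dimensions and hence cannot be isomorphic) is an infinite set of pairwise non-isomorphic indecomposables in $\rep{C}$, proving that $C$ is representation infinite.

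There is no real obstacle here: all the work was done in proving Theorem~\ref{th:cube}. The only minor care needed is to pick a family whose infinitude does not depend on the field being infinite, which is handled by using the dimension parameter $n$ rather than the spectral parameter $\lambda$.
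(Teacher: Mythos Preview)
Your proposal is correct and follows exactly the paper's intended argument: the paper states that the corollary is ``immediate from Theorem~\ref{th:cube}'' after noting the infinite family $R_n(\lambda)$ of indecomposables for $Q_2$, and your proof spells out precisely this deduction. Your extra remark about using the dimension parameter $n$ to avoid dependence on an infinite field is a nice clarification that the paper leaves implicit.
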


By the above arguments, $\theta(R_n(\lambda))$:
\[
  \theta(R_n(\lambda)):
  \begin{tikzcd}[row sep=1.2em]
    & K^{n} \ar{rr}{} & & 0
    \\
    K^n \ar{ur}{I} \ar{rr}[near end]{1} & & K^n \ar{ur}{} &
    \\
    & K^{n} \ar{rr}[near start]{1} \ar{uu}[near start]{1} & & K^{n} \ar{uu}{}
    \\
    0 \ar{uu}{} \ar{ur}{} \ar{rr}{} & & K^n \ar{uu}[near start]{1} \ar{ur}[swap]{J_n(\lambda)} &
  \end{tikzcd}
\]
are indecomposable and pairwise non-isomorphic for $n\geq 0$.

\subsection{Topological construction}

We give a topological realization for $\theta(R_n(0))$ in Fig.~\ref{fig:cube_realiz}. In the back face, we have $n$ half filled-in strips arranged side by side horizontally in an alternating pattern (Fig.~\ref{fig:cube_realiz} shows the case $n$ even). Using the lower left corner, we are able to flip the pattern. Coming from the front face, and with the given choice of basis, the induced map $H_1(\iota_2)$ is $J_n(0)$ while $H_1(\iota_3)$ is the identity $I$.

\begin{figure}[hbt]
  \caption{Topological realization of $\theta(R_n(0))$.} \label{fig:cube_realiz}
  \newcommand{\myscale}{0.4}

  \newcommand\defineboardpts{
    \coordinate (Ia) at (0,1);
    \coordinate (Ib) at (0,0);
    \coordinate (Ic) at (0,-1);

    \coordinate (IaSh) at (0,0.8);
    \coordinate (IbuSh) at (0,0.2);
    \coordinate (IbdSh) at (0,-0.2);
    \coordinate (IcSh) at (0,-0.8);
  }

  \newcommand{\shiftptpt}[2]{
    ($ #1 + (1*#2,0) $)
  }

  \newcommand{\drawuphk}[2]{
    \draw[#1] \shiftptpt{(Ib)}{#2} -- \shiftptpt{(Ia)}{#2} -- \shiftptpt{(Ia)}{#2+1} -- \shiftptpt{(Ib)}{#2+1};
  }

  \newcommand{\drawdwhk}[2]{
    \draw[#1] \shiftptpt{(Ib)}{#2} -- \shiftptpt{(Ic)}{#2} -- \shiftptpt{(Ic)}{#2+1} -- \shiftptpt{(Ib)}{#2+1};
  }

  \newcommand{\drawdwleftcee}[2]{
    \draw[#1] \shiftptpt{(Ib)}{#2+1} -- \shiftptpt{(Ib)}{#2} -- \shiftptpt{(Ic)}{#2} -- \shiftptpt{(Ic)}{#2+1} ;
  }
  \newcommand{\drawdwrightcee}[2]{
    \draw[#1] \shiftptpt{(Ib)}{#2} -- \shiftptpt{(Ib)}{#2+1} --  \shiftptpt{(Ic)}{#2+1} -- \shiftptpt{(Ic)}{#2} ;
  }

  \newcommand{\drawupleftcee}[2]{
    \draw[#1] \shiftptpt{(Ia)}{#2+1} -- \shiftptpt{(Ia)}{#2} -- \shiftptpt{(Ib)}{#2} -- \shiftptpt{(Ib)}{#2+1} ;
  }
  \newcommand{\drawuprightcee}[2]{
    \draw[#1] \shiftptpt{(Ia)}{#2} -- \shiftptpt{(Ia)}{#2+1} --  \shiftptpt{(Ib)}{#2+1} -- \shiftptpt{(Ib)}{#2} ;
  }

  \newcommand{\shrunkfirst}[2]{
    \draw[#1] \shiftptpt{(IbuSh)}{#2+0.2} -- \shiftptpt{(IbuSh)}{#2+0.8} -- \shiftptpt{(IaSh)}{#2+0.8} -- \shiftptpt{(IaSh)}{#2+0.2}--cycle;
  }

  \newcommand{\shrunkdwcycle}[2]{
    \draw[#1] \shiftptpt{(IbdSh)}{#2+0.2} -- \shiftptpt{(IbdSh)}{#2+1.8} -- \shiftptpt{(IcSh)}{#2+1.8} -- \shiftptpt{(IcSh)}{#2+0.2}--cycle;
  }

  \newcommand{\shrunkupcycle}[2]{
    \draw[#1] \shiftptpt{(IbuSh)}{#2+0.2} -- \shiftptpt{(IbuSh)}{#2+1.8} -- \shiftptpt{(IaSh)}{#2+1.8} -- \shiftptpt{(IaSh)}{#2+0.2}--cycle;
  }

  \newcommand{\shrunkdwleftcee}[2]{
    \draw[#1] \shiftptpt{(IbdSh)}{#2+1} -- \shiftptpt{(IbdSh)}{#2+0.2} -- \shiftptpt{(IcSh)}{#2+0.2} -- \shiftptpt{(IcSh)}{#2+1};
  }

  \newcommand{\shrunkdwrightcee}[2]{
    \draw[#1] \shiftptpt{(IbdSh)}{#2} -- \shiftptpt{(IbdSh)}{#2+0.8} -- \shiftptpt{(IcSh)}{#2+0.8} -- \shiftptpt{(IcSh)}{#2};
  }

  \centering
  \begin{tikzcd}[ampersand replacement = \&, row sep = 1em, column sep = 1em,nodes={inner sep = 0.5em}, arrows={hookrightarrow}]
    \& 
    \begin{tikzpicture}[scale=\myscale,baseline=(current bounding box.center),rotate=0]
      \defineboardpts
      \drawuphk{black}{0}
      \drawdwhk{black,fill}{0}
      \drawuphk{black,fill}{1}
      \drawdwhk{black}{1}
      \drawuphk{black}{2}
      \drawdwhk{black,fill}{2}
      \drawuphk{black,fill}{4}
      \drawdwhk{black}{4}
      \node at \shiftptpt{(Ib)}{3.5} {$\hdots$};

      \node [yshift=1em,xshift=0.75em] at \shiftptpt{(Ia)}{0}{$a_1$};
      \node [yshift=-1em,xshift=0.75em] at \shiftptpt{(Ic)}{1}{$a_2$};
      \node [yshift=1em,xshift=0.75em] at \shiftptpt{(Ia)}{2}{$a_3$};
      \node [yshift=-1em,xshift=0.75em] at \shiftptpt{(Ic)}{4}{$a_n$};
    \end{tikzpicture}
    \ar{rr}{}
    \&
    \& 
    \begin{tikzpicture}[scale=\myscale,baseline=(current bounding box.center),rotate=0]
      \defineboardpts
      \drawuphk{black,fill}{0}
      \drawdwhk{black,fill}{0}
      \drawuphk{black,fill}{1}
      \drawdwhk{black,fill}{1}
      \drawuphk{black,fill}{2}
      \drawdwhk{black,fill}{2}
      \drawuphk{black,fill}{4}
      \drawdwhk{black,fill}{4}
      \node at \shiftptpt{(Ib)}{3.5} {$\hdots$};
    \end{tikzpicture}
    \\  
    \begin{tikzpicture}[scale=\myscale,baseline=(current bounding box.center),rotate=0]
      \defineboardpts
      \drawuphk{black}{0}
      \drawdwhk{black,fill}{0}
      \drawuphk{black,fill}{1}
      \drawdwhk{black}{1}
      \drawuphk{black}{2}
      \drawdwhk{black,fill}{2}
      \drawuphk{black,fill}{4}
      \drawdwhk{black}{4}
      \node at \shiftptpt{(Ib)}{3.5} {$\hdots$};
    \end{tikzpicture}
    \ar{ur}{\iota_3}
    \ar{rr}{}
    \&
    \&
    \begin{tikzpicture}[scale=\myscale,baseline=(current bounding box.center),rotate=0]
      \defineboardpts
      \drawuphk{black}{0}
      \drawdwhk{black,fill}{0}
      \drawuphk{black,fill}{1}
      \drawdwhk{black}{1}
      \drawuphk{black}{2}
      \drawdwhk{black,fill}{2}
      \drawuphk{black,fill}{4}
      \drawdwhk{black}{4}
      \node at \shiftptpt{(Ib)}{3.5} {$\hdots$};
    \end{tikzpicture}
    \ar{ur}{}
    \\
    \& 
    \begin{tikzpicture}[scale=\myscale,baseline=(current bounding box.center),rotate=0]
      \defineboardpts
      \drawuphk{black}{0}
      \drawdwhk{black}{0}
      \drawuphk{black}{1}
      \drawdwhk{black}{1}
      \drawuphk{black}{2}
      \drawdwhk{black}{2}
      \drawuphk{black}{4}
      \drawdwhk{black}{4}
      \node at \shiftptpt{(Ib)}{3.5} {$\hdots$};

      \node [yshift=-1em,xshift=0.75em] at \shiftptpt{(Ic)}{0}{$a_1$};
      \node [yshift=-1em,xshift=0.75em] at \shiftptpt{(Ic)}{1}{$a_2$};
      \node [yshift=-1em,xshift=0.75em] at \shiftptpt{(Ic)}{2}{$a_3$};
      \node [yshift=-1em,xshift=0.75em] at \shiftptpt{(Ic)}{4}{$a_n$};
    \end{tikzpicture}
    \ar{rr}
    \ar[shorten <=0em, shorten >=0em]{uu}{}
    \&
    \& 
    \begin{tikzpicture}[scale=\myscale,baseline=(current bounding box.center),rotate=0]
      \defineboardpts
      \drawuphk{black,fill}{0}
      \drawdwhk{black}{0}
      \drawuphk{black}{1}
      \drawdwhk{black,fill}{1}
      \drawuphk{black,fill}{2}
      \drawdwhk{black}{2}
      \drawuphk{black}{4}
      \drawdwhk{black,fill}{4}
      \node at \shiftptpt{(Ib)}{3.5} {$\hdots$};
      \node [yshift=-1em,xshift=0.75em] at \shiftptpt{(Ic)}{0}{$a_1$};
      \node [yshift=1em,xshift=0.75em] at \shiftptpt{(Ia)}{1}{$a_2$};
      \node [yshift=-1em,xshift=0.75em] at \shiftptpt{(Ic)}{2}{$a_3$};
      \node [yshift=1em,xshift=0.75em] at \shiftptpt{(Ia)}{4}{$a_n$};
    \end{tikzpicture}
    \ar[shorten <=0em, shorten >=0em]{uu}{}
    \\
    \begin{tikzpicture}[scale=\myscale,baseline=(current bounding box.center)]
      \defineboardpts
      \draw[fill] \shiftptpt{(Ib)}{2} circle (2pt);
      \path[use as bounding box] \shiftptpt{(Ia)}{0} rectangle \shiftptpt{(Ic)}{4};
    \end{tikzpicture}
    \ar[shorten <=0em, shorten >=0em]{uu}{}
    \ar{ur}{}
    \ar{rr}{}
    \&
    \& 
    \begin{tikzpicture}[scale=\myscale,baseline=(current bounding box.center),rotate=0]
      \defineboardpts
      \drawuphk{black}{0}
      \drawdwleftcee{black}{0}

      \drawupleftcee{black}{1}
      \drawdwrightcee{black}{1}

      \drawuprightcee{black}{2}
      \drawdwleftcee{black}{2}

      \node at \shiftptpt{(Ib)}{3.5} {$\hdots$};

      \drawupleftcee{black}{4}
      \drawdwrightcee{black}{4}

      \shrunkfirst{blue}{0}
      \shrunkdwcycle{blue}{0}
      \shrunkupcycle{blue}{1}
      \shrunkdwrightcee{blue}{4}
      \shrunkdwleftcee{blue}{2}

      \node [yshift=1em,xshift=0.75em,blue] at \shiftptpt{(Ia)}{0}{$z_1$};
      \node [yshift=-1em,xshift=0.75em,blue] at \shiftptpt{(Ic)}{1}{$z_2$};
      \node [yshift=1em,xshift=0.75em,blue] at \shiftptpt{(Ia)}{2}{$z_3$};
      \node [yshift=-1em,xshift=0.75em,blue] at \shiftptpt{(Ic)}{4}{$z_n$};

    \end{tikzpicture}
    \ar[shorten <=0em, shorten >=0em]{uu}{}
    \ar[swap]{ur}{\iota_2}
  \end{tikzcd}
\end{figure}


\section{Commutative grid $3\times 3$}

\subsection{Algebraic construction}

The Euclidean quiver of type $\tilde{D}_4$ is representation infinite for any orientation of the arrows.
\[
  \tilde{D}_4:
  \begin{tikzcd}[row sep=1em, column sep=1.2em]
     & 1 & \\
    2 \rar[dash] & 0 \uar[dash] \dar[dash] & 4 \lar[dash] \\
     & 3 &
  \end{tikzcd}
\]
We build infinite family of indecomposables for the $\tilde{D}_4$-type quiver with appropriate orientation and complete it to be indecomposable representations of a $3\times 3$ grid.
Up to symmetries, there are six different orientations of the $3\times 3$ grid.
We classify them according to the resulting orientation of the central $\tilde{D}_4$, shown in Fig.~\ref{fig:33config}.

\newcommand{\tgrid}[4]{
  \begin{tikzcd}[ampersand replacement=\&, row sep=1.2em, column sep = 1.2em]
    \bullet \rar[#1]{} \&
    \bullet \rar[#2]{} \&
    \bullet
    \\
    \bullet \rar[#1,blue]{} \uar[#4]{}\&
    \bullet \rar[#2,blue]{} \uar[#4,blue]{} \&
    \bullet \uar[#4]{}
    \\
    \bullet \rar[#1]{} \uar[#3]{} \&
    \bullet \rar[#2]{} \uar[#3,blue]{} \&
    \bullet \uar[#3]{}
  \end{tikzcd}
}


\begin{figure}[h]
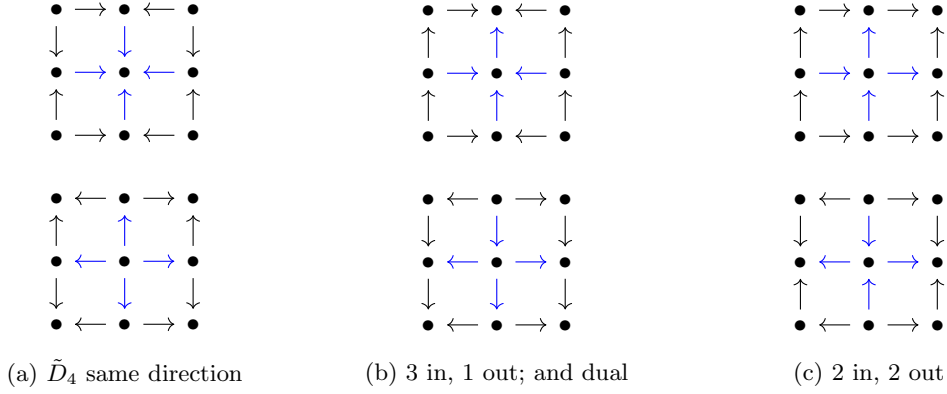

  \caption{Configurations for the $3\times 3$ grid.}
    \label{fig:33config}
  \begin{subfigure}{0.30\textwidth}
    \[
      \begin{array}{c}
        \tgrid{rightarrow}{leftarrow}{rightarrow}{leftarrow} \\ \\
        \tgrid{leftarrow}{rightarrow}{leftarrow}{rightarrow}
      \end{array}
    \]
    \caption{$\tilde{D}_4$ same direction}
  \end{subfigure}
  \begin{subfigure}{0.30\textwidth}
    \[
      \begin{array}{c}
    \tgrid{rightarrow}{leftarrow}{rightarrow}{rightarrow} \\ \\
        \tgrid{leftarrow}{rightarrow}{leftarrow}{leftarrow}
      \end{array}
    \]
    \caption{3 in, 1 out; and dual}
  \end{subfigure}
  \begin{subfigure}{0.30\textwidth}
    \[
      \begin{array}{c}
      \tgrid{rightarrow}{rightarrow}{rightarrow}{rightarrow} \\ \\
        \tgrid{leftarrow}{rightarrow}{rightarrow}{leftarrow}
      \end{array}
    \]
    \caption{2 in, 2 out}
  \end{subfigure}
\end{figure}



We fix the vector spaces to be $K^{2d}$ at the central vertex and $K^d$ elsewhere.
Note that at least two arrows of $\tilde{D}_4$ will be pointing in the same direction relative to the central vertex.
On two of these arrows, we assign matrices $\smat{I&0}$ and $\smat{0&I}$ or $\smat{I\\0}$ and $\smat{0\\I}$ depending on their orientation.
The remaining two arrows are assigned $\smat{I&J_d(\lambda)}$ or its transpose, and $\smat{I&I}$ or its transpose.
For example, with three arrows pointing in and one out, we can have:
\[
  \begin{tikzcd}[ampersand replacement=\&]
    \& K^d \& \\
    K^d \rar{\smat{I\\0}} \&
    K^{2d} \uar[swap]{\smat{I&J_d(\lambda)}}
    \& K^d \lar{\smat{I\\I}} \\
    \& K^d \uar{\smat{0\\I}} \&
  \end{tikzcd}.
\]
The proof of indecomposability is again by computation of the endomorphism ring.
Let $f=(f_0,\hdots, f_4)$ be an endomorphism.
In matrix form, $f_0 = \smat{A & B \\ C & D}:
K^{2d}\rightarrow K^{2d}$, where $0$ is the central vertex.
Without loss of generality, we assume that the pair of arrows pointing in the same direction and assigned matrices $\smat{I\\0}$ and $\smat{0\\I}$ (or $\smat{I&0}$ and $\smat{0&I}$), start from (or point towards) vertices $1$ and $2$, respectively.
From commutativity requirement for endomorphisms, $f_0 =\smat{A & B \\ C & D} = \smat{f_1 & 0 \\ 0 & f_2}$.
Suppose that the arrow assigned $\smat{I&I}$ (or its transpose) points to (or starts from) vertex $3$.
The commutativity requirement with $f_3$ then requires $f_1=f_3=f_2$.
Final commutativity requirement then forces $f_1=f_2=f_3=f_4$ and $f_1J_d(\lambda) = J_d(\lambda) f_1$.
Thus, the endomorphism ring is local.

Completing the example into the $3\times 3$ grid is easy.
In the squares where the representation $\tilde{D}_4$ provides a nonzero composition of maps, we use that composition on one arrow and the identity on the other arrow.
Otherwise we simply use a $0$ vector space and $0$ maps.
\[
  \begin{tikzcd}[ampersand replacement=\&]
    K^d \rar{I} \&
    K^d \&
    K^d \lar[swap]{I} \\
    K^d
    \rar{\smat{I\\0}}
    \uar{\smat{I&J_d(\lambda)}\smat{I\\0}=I}
    \&
    K^{2d} \uar[swap]{\smat{I&J_d(\lambda)}}
    \& K^d \lar{\smat{I\\I}}
    \uar[swap]{J_d(\lambda+1) = \smat{I&J_d(\lambda)}\smat{I\\I}}
    \\
    0 \uar \rar \& K^d \uar{\smat{0\\I}} \& 0 \lar\uar
  \end{tikzcd}
\]
This algebraic construction can then be realized through a diagram of topological spaces and inclusions using our previous \emph{sandal} construction. We do this in the next subsection.


\subsection{Topological Construction}\label{sec:3x3realization}

This subsection is split into two parts. First, we tackle the case where we have at least one arrow pointing towards the central vertex. Then we solve the remaining case where all arrows are outbound.

\subsubsection{At least one inbound arrow}

The space at the central vertex is $K^{2d}$.
We realize it using our sandal with $d$ straps and the basis used in Figure~\ref{fig:sandal_cycles} (second space in the top row).
As we have at least one arrow pointing to this space, we use again the structure from the second column of Figure~\ref{fig:sandal_cycles}.
We end up with the following inclusion map
{ 
  \input{drawing_macros}
\newcommand{\myscale}{0.65}

\[
\begin{tikzcd}[ampersand replacement = \&, row sep = 2.5em, column sep = 2.5em,nodes={inner sep = 1.2em}, arrows={hookrightarrow}]
  \begin{tikzpicture}[scale=\myscale,baseline=(current bounding box.center),rotate=90]
    \definesandalpts
    \drawuphk{black}{3}
    \drawstrp{black}{3}
    \drawdwhk{black}{3}
    \drawuphk{black}{2}
    \drawstrp{black}{2}
    \drawdwhk{black}{2}
    \drawuphk{black}{0}
    \drawstrp{black}{0}
    \drawdwhk{black}{0}
    \node[xshift=1em] at \shiftptno{(Sb)}{1.5}{.1} {$\hdots$};
    \path[use as bounding box] \shiftpt{(Id)}{3} rectangle \shiftpt{(Ib)}{-1};

    \node [coordinate] (Ina) at \shiftpt{(Ia)}{-1.2} {};
    \node [coordinate] (Inb) at \shiftpt{(Ib)}{-0.8} {};

    \node [coordinate] (I0a) at \shiftpt{(Ia)}{-0.2} {};
    \node [coordinate] (I0b) at \shiftpt{(Ib)}{0.2} {};

    \node [coordinate] (I1a) at \shiftpt{(Ia)}{0.8} {};
    \node [coordinate] (I1b) at \shiftpt{(Ib)}{1.2} {};

    \node [coordinate] (I2a) at \shiftpt{(Ia)}{1.8} {};
    \node [coordinate] (I2b) at \shiftpt{(Ib)}{2.2} {};

    \expanduphk{LowerRowColor1}{3}{3.1}
    \expandstrp{LowerRowColor1}{3.1}

    \expandstrp{LowerRowColor1}{2.9}
    \expanddwlegs{LowerRowColor1}{3}{2.9}
    \expanduplegs{LowerRowColor1}{2}{2.1}
    \expandstrp{LowerRowColor1}{2.1}

    \expandstrp{LowerRowColor1}{1.9}
    \expanddwlegs{LowerRowColor1}{2}{1.9}

    \expanduplegs{LowerRowColor1}{0}{0.1}
    \expandstrp{LowerRowColor1}{0.1}
    \draw[fill, white] ($ (I2a)!0.33!(I2b) $) rectangle ($ (I2b)!0.33!(I2a) $);
    \draw[fill, white] ($ (I1a)!0.33!(I1b) $) rectangle ($ (I1b)!0.33!(I1a) $);
    \draw[fill, white] ($ (I0a)!0.33!(I0b) $) rectangle ($ (I0b)!0.33!(I0a) $);
    \draw[fill, white] ($ (Ina)!0.33!(Inb) $) rectangle ($ (Inb)!0.33!(Ina) $);

    \node[anchor=west, LowerRowColor1] at \shiftptno{(IbEx)}{3.25}{-.1} {$z_{d+1}$};
    \node[anchor=west, LowerRowColor1] at \shiftptno{(IbEx)}{2.5}{-.1} {$z_{d+2}$};
    \node[anchor=west, LowerRowColor1] at \shiftptno{(IbEx)}{.25}{-.1} {$z_{d+d}$};
  \end{tikzpicture}
  \rar\&
   \begin{tikzpicture}[scale=\myscale,baseline=(current bounding box.center),rotate=90]
    \definesandalpts
    \drawuphk{black}{3}
    \drawstrp{black}{3}
    \drawdwhk{black}{3}
    \drawuphk{black}{2}
    \drawstrp{black}{2}
    \drawdwhk{black}{2}
    \drawuphk{black}{0}
    \drawstrp{black}{0}
    \drawdwhk{black}{0}
    \node[xshift=1em] at \shiftptno{(Sb)}{1.5}{.1} {$\hdots$};
    \path[use as bounding box] \shiftpt{(Id)}{3} rectangle \shiftpt{(Ib)}{-1};

    \shrunkuphk{UpperRowColor1}{3}
    \shrunkstrp{UpperRowColor1}{3}
    \shrunkuphk{UpperRowColor1}{2}
    \shrunkstrp{UpperRowColor1}{2}
    \shrunkuphk{UpperRowColor1}{0}
    \shrunkstrp{UpperRowColor1}{0}
    \expandstrp{UpperRowColor2}{3}
    \expanddwhk{UpperRowColor2}{3}{3}
    \expandstrp{UpperRowColor2}{2}
    \expanddwhk{UpperRowColor2}{2}{2}
    \expandstrp{UpperRowColor2}{0}
    \expanddwhk{UpperRowColor2}{0}{0}

    \node[anchor=west, UpperRowColor1] at \shiftptno{(IbEx)}{3.25}{-.1} {$a_1$};
    \node[anchor=west, UpperRowColor1] at \shiftptno{(IbEx)}{2.25}{-.1} {$a_2$};
    \node[anchor=west, UpperRowColor1] at \shiftptno{(IbEx)}{.25}{-.1} {$a_d$};
    \node[anchor=north, UpperRowColor2] at \shiftptno{(SdSh)}{2.75}{.75} {$a_{d+1}$};
    \node[anchor=north, UpperRowColor2] at \shiftptno{(SdSh)}{1.75}{.75} {$a_{d+2}$};
    \node[anchor=north, UpperRowColor2] at \shiftptno{(SdSh)}{-.25}{.75} {$a_{d+d}$};
  \end{tikzpicture}
\end{tikzcd}
\]
}
whose induced homology map is $\smat{I\\J_d(0)}$ with the indicated choice of bases.

We continue the construction by adding the matrix containing two $I$ blocks.
If the corresponding arrow points into the central vertex, we use the following inclusion,
where we start with the soles without the straps:
{  
  \input{drawing_macros}
\newcommand{\myscale}{0.65}

\[
\begin{tikzcd}[ampersand replacement = \&, row sep = 2.5em, column sep = 2.5em,nodes={inner sep = 1.2em}, arrows={hookrightarrow}]
  \begin{tikzpicture}[scale=\myscale,baseline=(current bounding box.center),rotate=90]
    \definesandalpts
    \drawuphk{black}{3}
    \drawdwhk{black}{3}
    \drawuphk{black}{2}
    \drawdwhk{black}{2}
    \drawuphk{black}{0}
    \drawdwhk{black}{0}
    \node[xshift=1em] at \shiftptno{(Sb)}{1.5}{.1} {$\hdots$};
    \path[use as bounding box] \shiftpt{(Id)}{3} rectangle \shiftpt{(Ib)}{-1};

    \shrunkuphk{LowerRowColor2}{3}
    \shrunkdwhk{LowerRowColor2}{3}
    \shrunkuphk{LowerRowColor2}{2}
    \shrunkdwhk{LowerRowColor2}{2}
    \shrunkuphk{LowerRowColor2}{0}
    \shrunkdwhk{LowerRowColor2}{0}

    \node[anchor=east, LowerRowColor2] at \shiftptno{(SaSh)}{2.5}{.2} {$z_{1}$};
    \node[anchor=east, LowerRowColor2] at \shiftptno{(SaSh)}{1.5}{.2} {$z_{2}$};
    \node[anchor=east, LowerRowColor2] at \shiftptno{(SaSh)}{-.5}{.2} {$z_{d}$};
  \end{tikzpicture}
  \rar\&
   \begin{tikzpicture}[scale=\myscale,baseline=(current bounding box.center),rotate=90]
    \definesandalpts
    \drawuphk{black}{3}
    \drawstrp{black}{3}
    \drawdwhk{black}{3}
    \drawuphk{black}{2}
    \drawstrp{black}{2}
    \drawdwhk{black}{2}
    \drawuphk{black}{0}
    \drawstrp{black}{0}
    \drawdwhk{black}{0}
    \node[xshift=1em] at \shiftptno{(Sb)}{1.5}{.1} {$\hdots$};
    \path[use as bounding box] \shiftpt{(Id)}{3} rectangle \shiftpt{(Ib)}{-1};

    \shrunkuphk{UpperRowColor1}{3}
    \shrunkstrp{UpperRowColor1}{3}
    \shrunkuphk{UpperRowColor1}{2}
    \shrunkstrp{UpperRowColor1}{2}
    \shrunkuphk{UpperRowColor1}{0}
    \shrunkstrp{UpperRowColor1}{0}
    \expandstrp{UpperRowColor2}{3}
    \expanddwhk{UpperRowColor2}{3}{3}
    \expandstrp{UpperRowColor2}{2}
    \expanddwhk{UpperRowColor2}{2}{2}
    \expandstrp{UpperRowColor2}{0}
    \expanddwhk{UpperRowColor2}{0}{0}

    \node[anchor=west, UpperRowColor1] at \shiftptno{(IbEx)}{3.25}{-.1} {$a_1$};
    \node[anchor=west, UpperRowColor1] at \shiftptno{(IbEx)}{2.25}{-.1} {$a_2$};
    \node[anchor=west, UpperRowColor1] at \shiftptno{(IbEx)}{.25}{-.1} {$a_d$};
    \node[anchor=north, UpperRowColor2] at \shiftptno{(SdSh)}{2.75}{.75} {$a_{d+1}$};
    \node[anchor=north, UpperRowColor2] at \shiftptno{(SdSh)}{1.75}{.75} {$a_{d+2}$};
    \node[anchor=north, UpperRowColor2] at \shiftptno{(SdSh)}{-.25}{.75} {$a_{d+d}$};
  \end{tikzpicture}
\end{tikzcd}
\]
}
with induced map $\smat{I\\I}$.
With an outbound arrow, we fill in the sole of the sandal:
{
  \input{drawing_macros}
\newcommand{\myscale}{0.65}

\[
\begin{tikzcd}[ampersand replacement = \&, row sep = 2.5em, column sep = 2.5em,nodes={inner sep = 1.2em}, arrows={hookrightarrow}]
   \begin{tikzpicture}[scale=\myscale,baseline=(current bounding box.center),rotate=90]
    \definesandalpts
    \drawuphk{black}{3}
    \drawstrp{black}{3}
    \drawdwhk{black}{3}
    \drawuphk{black}{2}
    \drawstrp{black}{2}
    \drawdwhk{black}{2}
    \drawuphk{black}{0}
    \drawstrp{black}{0}
    \drawdwhk{black}{0}
    \node[xshift=1em] at \shiftptno{(Sb)}{1.5}{.1} {$\hdots$};
    \path[use as bounding box] \shiftpt{(Id)}{3} rectangle \shiftpt{(Ib)}{-1};

    \shrunkuphk{UpperRowColor1}{3}
    \shrunkstrp{UpperRowColor1}{3}
    \shrunkuphk{UpperRowColor1}{2}
    \shrunkstrp{UpperRowColor1}{2}
    \shrunkuphk{UpperRowColor1}{0}
    \shrunkstrp{UpperRowColor1}{0}
    \expandstrp{UpperRowColor2}{3}
    \expanddwhk{UpperRowColor2}{3}{3}
    \expandstrp{UpperRowColor2}{2}
    \expanddwhk{UpperRowColor2}{2}{2}
    \expandstrp{UpperRowColor2}{0}
    \expanddwhk{UpperRowColor2}{0}{0}

    \node[anchor=west, UpperRowColor1] at \shiftptno{(IbEx)}{3.25}{-.1} {$a_1$};
    \node[anchor=west, UpperRowColor1] at \shiftptno{(IbEx)}{2.25}{-.1} {$a_2$};
    \node[anchor=west, UpperRowColor1] at \shiftptno{(IbEx)}{.25}{-.1} {$a_d$};
    \node[anchor=north, UpperRowColor2] at \shiftptno{(SdSh)}{2.75}{.75} {$a_{d+1}$};
    \node[anchor=north, UpperRowColor2] at \shiftptno{(SdSh)}{1.75}{.75} {$a_{d+2}$};
    \node[anchor=north, UpperRowColor2] at \shiftptno{(SdSh)}{-.25}{.75} {$a_{d+d}$};
  \end{tikzpicture}
   \rar\&
   \begin{tikzpicture}[scale=\myscale,baseline=(current bounding box.center),rotate=90]
    \definesandalpts
    \drawuphk{black,fill}{3}
    \drawstrp{black}{3}
    \drawdwhk{black,fill}{3}
    \drawuphk{black,fill}{2}
    \drawstrp{black}{2}
    \drawdwhk{black,fill}{2}
    \drawuphk{black,fill}{0}
    \drawstrp{black}{0}
    \drawdwhk{black,fill}{0}
    \node[xshift=1em] at \shiftptno{(Sb)}{1.5}{.1} {$\hdots$};
    \path[use as bounding box] \shiftpt{(Id)}{3} rectangle \shiftpt{(Ib)}{-1};

  \end{tikzpicture}
\end{tikzcd}
\]
}
with induced map $\smat{I&I}$.

In order to complete our construction, we add simple inclusion maps as needed, maintaining as many identity maps as possible. This provide us with the following diagrams for the five possible cases.
First we give the diagram of the topological spaces together with representatives for the homology bases.
Second, we also provide the induced indecomposable persistence module, obtained by applying $H_1(\cdot)$.

In what follows, we list up all of the topological realizations, up to rotation of diagram. To save space, we draw only copy number $1$ and $d$ of the sandal piece (and put ``$\hdots$'' in between) for the entries in the $3\times 3$ grid where the topological space and choice of basis is the obvious one. Otherwise, we proceed with our usual illustration of pieces $1$, $2$ and $d$.

\paragraph{3 In 1 Out} With three inbound and one outbound arrows to the central vertex, the diagram:
{
  \input{drawing_macros}
\newcommand{\myscale}{.5}
\[
\begin{tikzcd}[ampersand replacement = \&, row sep = 2em, column sep = 2.5em,nodes={inner sep = 1.2em}, arrows={hookrightarrow}]
  \begin{tikzpicture}[scale=\myscale,baseline=(current bounding box.center),rotate=90]
    \definesandalpts    
    \drawuphk{black}{2}
    \drawstrp{black}{2}
    \drawuphk{black}{0}
    \drawstrp{black}{0}
    \node[xshift=0em] at \shiftpt{(Sb)}{1} {$\cdots$};
    \path[use as bounding box] \shiftpt{(Id)}{2} rectangle \shiftpt{(Ib)}{-1};
  \end{tikzpicture}
\rar \&
   \begin{tikzpicture}[scale=\myscale,baseline=(current bounding box.center),rotate=90]
    \definesandalpts
    \drawuphk{black,fill}{3}
    \drawstrp{black}{3}
    \drawdwhk{black,fill}{3}
    \drawuphk{black,fill}{2}
    \drawstrp{black}{2}
    \drawdwhk{black,fill}{2}
    \drawuphk{black,fill}{0}
    \drawstrp{black}{0}
    \drawdwhk{black,fill}{0}
    \node[xshift=0em] at \shiftpt{(Sb)}{1} {$\cdots$};
    \path[use as bounding box] \shiftpt{(Id)}{3} rectangle \shiftpt{(Ib)}{-1};
  \end{tikzpicture}
   \rar[hookleftarrow] \& 
  \begin{tikzpicture}[scale=\myscale,baseline=(current bounding box.center),rotate=90]
    \definesandalpts    
    \drawstrp{black}{2}
    \drawdwhk{black}{2}
    \drawstrp{black}{0}
    \drawdwhk{black}{0}
    \node[xshift=0em] at \shiftpt{(Sb)}{1} {$\cdots$};
    \path[use as bounding box] \shiftpt{(Id)}{2} rectangle \shiftpt{(Ib)}{-1};
    \end{tikzpicture}
  \\
  \begin{tikzpicture}[scale=\myscale,baseline=(current bounding box.center),rotate=90]
    \definesandalpts    
    \drawuphk{black}{2}
    \drawstrp{black}{2}
    \drawuphk{black}{0}
    \drawstrp{black}{0}
    \node[xshift=0em] at \shiftpt{(Sb)}{1} {$\cdots$};
    \path[use as bounding box] \shiftpt{(Id)}{2} rectangle \shiftpt{(Ib)}{-1};
  \end{tikzpicture}
\rar\uar\&
\begin{tikzpicture}[scale=\myscale,baseline=(current bounding box.center),rotate=90]
    \definesandalpts
    \drawuphk{black}{3}
    \drawstrp{black}{3}
    \drawdwhk{black}{3}
    \drawuphk{black}{2}
    \drawstrp{black}{2}
    \drawdwhk{black}{2}
    \drawuphk{black}{0}
    \drawstrp{black}{0}
    \drawdwhk{black}{0}
    \node[xshift=0em] at \shiftpt{(Sb)}{1} {$\cdots$};
    \path[use as bounding box] \shiftpt{(Id)}{3} rectangle \shiftpt{(Ib)}{-1};

    \shrunkuphk{UpperRowColor1}{3}
    \shrunkstrp{UpperRowColor1}{3}
    \shrunkuphk{UpperRowColor1}{2}
    \shrunkstrp{UpperRowColor1}{2}
    \shrunkuphk{UpperRowColor1}{0}
    \shrunkstrp{UpperRowColor1}{0}
    \expandstrp{UpperRowColor2}{3}
    \expanddwhk{UpperRowColor2}{3}{3}
    \expandstrp{UpperRowColor2}{2}
    \expanddwhk{UpperRowColor2}{2}{2}
    \expandstrp{UpperRowColor2}{0}
    \expanddwhk{UpperRowColor2}{0}{0}
    
    \node[anchor=west, UpperRowColor1] at \shiftptno{(IbEx)}{3.25}{-.1} {$a_1$};
    \node[anchor=west, UpperRowColor1] at \shiftptno{(IbEx)}{2.25}{-.1} {$a_2$};
    \node[anchor=west, UpperRowColor1] at \shiftptno{(IbEx)}{.25}{-.1} {$a_d$};
    \node[anchor=north, UpperRowColor2] at \shiftptno{(SdSh)}{2.75}{.9} {$a_{d+1}$};
    \node[anchor=north, UpperRowColor2] at \shiftptno{(SdSh)}{1.75}{.9} {$a_{d+2}$};
    \node[anchor=north, UpperRowColor2] at \shiftptno{(SdSh)}{-.25}{.9} {$a_{d+d}$};    
  \end{tikzpicture}
  \uar \rar[hookleftarrow]  \&
  \begin{tikzpicture}[scale=\myscale,baseline=(current bounding box.center),rotate=90]
  \definesandalpts    
    \drawstrp{black}{2}
    \drawdwhk{black}{2}
    \drawstrp{black}{0}
    \drawdwhk{black}{0}
    \node[xshift=0em] at \shiftpt{(Sb)}{1} {$\cdots$};
    \path[use as bounding box] \shiftpt{(Id)}{2} rectangle \shiftpt{(Ib)}{-1};
    \end{tikzpicture}
    \uar
  \\
  \begin{tikzpicture}[scale=\myscale,baseline=(current bounding box.center),rotate=90]
    \definesandalpts
    \draw[fill] \shiftpt{(Sa)}{1} circle (2pt);
    \path[use as bounding box] \shiftpt{(Id)}{2} rectangle \shiftpt{(Ib)}{-1};
  \end{tikzpicture}
  \rar\uar\&
  \begin{tikzpicture}[scale=\myscale,baseline=(current bounding box.center),rotate=90]
    \definesandalpts
    \drawuphk{black}{3}
    \drawstrp{black}{3}
    \drawdwhk{black}{3}
    \drawuphk{black}{2}
    \drawstrp{black}{2}
    \drawdwhk{black}{2}
    \drawuphk{black}{0}
    \drawstrp{black}{0}
    \drawdwhk{black}{0}
    \node[xshift=0em] at \shiftpt{(Sb)}{1} {$\cdots$};
    \path[use as bounding box] \shiftpt{(Id)}{3} rectangle \shiftpt{(Ib)}{-1};

    \node [coordinate] (Ina) at \shiftpt{(Ia)}{-1.2} {};
    \node [coordinate] (Inb) at \shiftpt{(Ib)}{-0.8} {};

    \node [coordinate] (I0a) at \shiftpt{(Ia)}{-0.2} {};
    \node [coordinate] (I0b) at \shiftpt{(Ib)}{0.2} {};

    \node [coordinate] (I1a) at \shiftpt{(Ia)}{0.8} {};
    \node [coordinate] (I1b) at \shiftpt{(Ib)}{1.2} {};

    \node [coordinate] (I2a) at \shiftpt{(Ia)}{1.8} {};
    \node [coordinate] (I2b) at \shiftpt{(Ib)}{2.2} {};

    \expanduphk{LowerRowColor1}{3}{3.1}
    \expandstrp{LowerRowColor1}{3.1}

    \expandstrp{LowerRowColor1}{2.9}
    \expanddwlegs{LowerRowColor1}{3}{2.9}
    \expanduplegs{LowerRowColor1}{2}{2.1}
    \expandstrp{LowerRowColor1}{2.1}

    \expandstrp{LowerRowColor1}{1.9}
    \expanddwlegs{LowerRowColor1}{2}{1.9}

    \expanduplegs{LowerRowColor1}{0}{0.1}
    \expandstrp{LowerRowColor1}{0.1}
    \draw[fill, white] ($ (I2a)!0.35!(I2b) $) rectangle ($ (I2b)!0.35!(I2a) $);
    \draw[fill, white] ($ (I1a)!0.35!(I1b) $) rectangle ($ (I1b)!0.35!(I1a) $);
    \draw[fill, white] ($ (I0a)!0.35!(I0b) $) rectangle ($ (I0b)!0.35!(I0a) $);
    \draw[fill, white] ($ (Ina)!0.35!(Inb) $) rectangle ($ (Inb)!0.35!(Ina) $);

    \node[anchor=west, LowerRowColor1] at \shiftptno{(IbEx)}{3.65}{-.1} {$z_{d+1}$};
    \node[anchor=west, LowerRowColor1] at \shiftptno{(IbEx)}{2.75}{-.1} {$z_{d+2}$};
    \node[anchor=west, LowerRowColor1] at \shiftptno{(IbEx)}{.5}{-.1} {$z_{d+d}$};
  \end{tikzpicture}
  \uar \rar[hookleftarrow]  \& \uar
  \begin{tikzpicture}[scale=\myscale,baseline=(current bounding box.center),rotate=90]
    \definesandalpts
    \draw[fill] \shiftpt{(Sa)}{1} circle (2pt);
    \path[use as bounding box] \shiftpt{(Id)}{2} rectangle \shiftpt{(Ib)}{-1};
  \end{tikzpicture}
\end{tikzcd}
\]

}
realizes
\[
\begin{tikzcd}[ampersand replacement=\&]
    K^d \rar{I} \&
    K^d \&
    K^d \lar[swap]{I} \\
    K^d
    \rar{\smat{I\\0}}
    \uar{I}
    \&
    K^{2d} \uar[swap]{\smat{I&I}}
    \& K^d \lar{\smat{0\\I}}
    \uar[swap]{I}
    \\
    0 \uar \rar \& K^d \uar{\smat{I\\J_d(0)}} \& 0 \lar\uar
  \end{tikzcd}.
\]

\paragraph{1 In 3 Out} Next, with one inbound and three outbound arrows, we have the diagram:
{
  \input{drawing_macros}
\newcommand{\myscale}{.5}

\[
\begin{tikzcd}[ampersand replacement = \&, row sep = 2em, column sep = 2.5em,nodes={inner sep = 1.2em}, arrows={hookrightarrow}]
  \begin{tikzpicture}[scale=\myscale,baseline=(current bounding box.center),rotate=90]
    \definesandalpts    
    \drawuphk{black,fill}{2}
    \drawstrp{black,fill}{2}
    \drawdwhk{black,fill}{2}
    \drawuphk{black,fill}{0}
    \drawstrp{black,fill}{0}
    \drawdwhk{black,fill}{0}
    \node[xshift=0em] at \shiftpt{(Sb)}{1} {$\cdots$};
    \path[use as bounding box] \shiftpt{(Id)}{2} rectangle \shiftpt{(Ib)}{-1};
  \end{tikzpicture}
\rar[hookleftarrow] \&
    \begin{tikzpicture}[scale=\myscale,baseline=(current bounding box.center),rotate=90]
    \definesandalpts
    \drawuphk{black,fill}{3}
    \drawstrp{black}{3}
    \drawdwhk{black,fill}{3}
    \drawuphk{black,fill}{2}
    \drawstrp{black}{2}
    \drawdwhk{black,fill}{2}
    \drawuphk{black,fill}{0}
    \drawstrp{black}{0}
    \drawdwhk{black,fill}{0}
    \node[xshift=0em] at \shiftpt{(Sb)}{1} {$\cdots$};
    \path[use as bounding box] \shiftpt{(Id)}{3} rectangle \shiftpt{(Ib)}{-1};
  \end{tikzpicture}
  \rar \&
  \begin{tikzpicture}[scale=\myscale,baseline=(current bounding box.center),rotate=90]
    \definesandalpts    
    \drawuphk{black,fill}{2}
    \drawstrp{black,fill}{2}
    \drawdwhk{black,fill}{2}
    \drawuphk{black,fill}{0}
    \drawstrp{black,fill}{0}
    \drawdwhk{black,fill}{0}
    \node[xshift=0em] at \shiftpt{(Sb)}{1} {$\cdots$};
    \path[use as bounding box] \shiftpt{(Id)}{2} rectangle \shiftpt{(Ib)}{-1};
    \end{tikzpicture}
  \\
  \begin{tikzpicture}[scale=\myscale,baseline=(current bounding box.center),rotate=90]
    \definesandalpts    
    \drawuphk{black}{2}
    \drawstrp{black,fill}{2}
    \drawdwhk{black,fill}{2}
    \drawuphk{black}{0}
    \drawstrp{black,fill}{0}
    \drawdwhk{black,fill}{0}    
    \node[xshift=0em] at \shiftpt{(Sb)}{1} {$\cdots$};
    \path[use as bounding box] \shiftpt{(Id)}{2} rectangle \shiftpt{(Ib)}{-1};
  \end{tikzpicture}
\uar\rar[hookleftarrow] \&
\begin{tikzpicture}[scale=\myscale,baseline=(current bounding box.center),rotate=90]
    \definesandalpts
    \drawuphk{black}{3}
    \drawstrp{black}{3}
    \drawdwhk{black}{3}
    \drawuphk{black}{2}
    \drawstrp{black}{2}
    \drawdwhk{black}{2}
    \drawuphk{black}{0}
    \drawstrp{black}{0}
    \drawdwhk{black}{0}
    \node[xshift=0em] at \shiftpt{(Sb)}{1} {$\cdots$};
    \path[use as bounding box] \shiftpt{(Id)}{3} rectangle \shiftpt{(Ib)}{-1};

    \shrunkuphk{UpperRowColor1}{3}
    \shrunkstrp{UpperRowColor1}{3}
    \shrunkuphk{UpperRowColor1}{2}
    \shrunkstrp{UpperRowColor1}{2}
    \shrunkuphk{UpperRowColor1}{0}
    \shrunkstrp{UpperRowColor1}{0}
    \expandstrp{UpperRowColor2}{3}
    \expanddwhk{UpperRowColor2}{3}{3}
    \expandstrp{UpperRowColor2}{2}
    \expanddwhk{UpperRowColor2}{2}{2}
    \expandstrp{UpperRowColor2}{0}
    \expanddwhk{UpperRowColor2}{0}{0}

    \node[anchor=west, UpperRowColor1] at \shiftptno{(IbEx)}{3.25}{-.1} {$a_1$};
    \node[anchor=west, UpperRowColor1] at \shiftptno{(IbEx)}{2.25}{-.1} {$a_2$};
    \node[anchor=west, UpperRowColor1] at \shiftptno{(IbEx)}{.25}{-.1} {$a_d$};
    \node[anchor=north, UpperRowColor2] at \shiftptno{(SdSh)}{2.75}{.9} {$a_{d+1}$};
    \node[anchor=north, UpperRowColor2] at \shiftptno{(SdSh)}{1.75}{.9} {$a_{d+2}$};
    \node[anchor=north, UpperRowColor2] at \shiftptno{(SdSh)}{-.25}{.9} {$a_{d+d}$};   
  \end{tikzpicture}
  \uar\rar\&
 \begin{tikzpicture}[scale=\myscale,baseline=(current bounding box.center),rotate=90]
    \definesandalpts    
    \drawuphk{black,fill}{2}
    \drawstrp{black,fill}{2}
    \drawdwhk{black}{2}
    \drawuphk{black,fill}{0}
    \drawstrp{black,fill}{0}
    \drawdwhk{black}{0}
    \node[xshift=0em] at \shiftpt{(Sb)}{1} {$\cdots$};
    \path[use as bounding box] \shiftpt{(Id)}{2} rectangle \shiftpt{(Ib)}{-1};
  \end{tikzpicture}
    \uar
  \\
  \begin{tikzpicture}[scale=\myscale,baseline=(current bounding box.center),rotate=90]
    \definesandalpts
    \drawuphk{black}{2}
    \drawstrp{black,fill}{2}
    \drawdwhk{black,fill}{2}
    \drawuphk{black}{0}
    \drawstrp{black,fill}{0}
    \drawdwhk{black,fill}{0}
    \node[xshift=0em] at \shiftpt{(Sb)}{1} {$\cdots$};
    \path[use as bounding box] \shiftpt{(Id)}{2} rectangle \shiftpt{(Ib)}{-1};
  \end{tikzpicture}
  \uar\rar[hookleftarrow] \&
  \begin{tikzpicture}[scale=\myscale,baseline=(current bounding box.center),rotate=90]
    \definesandalpts
    \drawuphk{black}{3}
    \drawstrp{black}{3}
    \drawdwhk{black}{3}
    \drawuphk{black}{2}
    \drawstrp{black}{2}
    \drawdwhk{black}{2}
    \drawuphk{black}{0}
    \drawstrp{black}{0}
    \drawdwhk{black}{0}
    \node[xshift=0em] at \shiftpt{(Sb)}{1} {$\cdots$};
    \path[use as bounding box] \shiftpt{(Id)}{3} rectangle \shiftpt{(Ib)}{-1};

    \node [coordinate] (Ina) at \shiftpt{(Ia)}{-1.2} {};
    \node [coordinate] (Inb) at \shiftpt{(Ib)}{-0.8} {};

    \node [coordinate] (I0a) at \shiftpt{(Ia)}{-0.2} {};
    \node [coordinate] (I0b) at \shiftpt{(Ib)}{0.2} {};

    \node [coordinate] (I1a) at \shiftpt{(Ia)}{0.8} {};
    \node [coordinate] (I1b) at \shiftpt{(Ib)}{1.2} {};

    \node [coordinate] (I2a) at \shiftpt{(Ia)}{1.8} {};
    \node [coordinate] (I2b) at \shiftpt{(Ib)}{2.2} {};

    \expanduphk{LowerRowColor1}{3}{3.1}
    \expandstrp{LowerRowColor1}{3.1}

    \expandstrp{LowerRowColor1}{2.9}
    \expanddwlegs{LowerRowColor1}{3}{2.9}
    \expanduplegs{LowerRowColor1}{2}{2.1}
    \expandstrp{LowerRowColor1}{2.1}

    \expandstrp{LowerRowColor1}{1.9}
    \expanddwlegs{LowerRowColor1}{2}{1.9}

    \expanduplegs{LowerRowColor1}{0}{0.1}
    \expandstrp{LowerRowColor1}{0.1}
    \draw[fill, white] ($ (I2a)!0.35!(I2b) $) rectangle ($ (I2b)!0.35!(I2a) $);
    \draw[fill, white] ($ (I1a)!0.35!(I1b) $) rectangle ($ (I1b)!0.35!(I1a) $);
    \draw[fill, white] ($ (I0a)!0.35!(I0b) $) rectangle ($ (I0b)!0.35!(I0a) $);
    \draw[fill, white] ($ (Ina)!0.35!(Inb) $) rectangle ($ (Inb)!0.35!(Ina) $);

    \node[anchor=west, LowerRowColor1] at \shiftptno{(IbEx)}{3.65}{-.1} {$z_{d+1}$};
    \node[anchor=west, LowerRowColor1] at \shiftptno{(IbEx)}{2.75}{-.1} {$z_{d+2}$};
    \node[anchor=west, LowerRowColor1] at \shiftptno{(IbEx)}{.5}{-.1} {$z_{d+d}$};
  \end{tikzpicture}
  \uar\rar \&\uar
 \begin{tikzpicture}[scale=\myscale,baseline=(current bounding box.center),rotate=90]
    \definesandalpts    
    \drawuphk{black,fill}{2}
    \drawstrp{black,fill}{2}
    \drawdwhk{black}{2}
    \drawuphk{black,fill}{0}
    \drawstrp{black,fill}{0}
    \drawdwhk{black}{0}
    \node[xshift=0em] at \shiftpt{(Sb)}{1} {$\cdots$};
    \path[use as bounding box] \shiftpt{(Id)}{2} rectangle \shiftpt{(Ib)}{-1};
  \end{tikzpicture}
\end{tikzcd}
\]

}
realizing the persistence module
\[
  \begin{tikzcd}[ampersand replacement=\&]
    0 \& \lar
    K^d \rar\&
    0  \\
    K^d
    \uar
    \&
    \lar[swap]{\smat{I&0}}
    K^{2d} \uar[swap]{\smat{I&I}}
    \rar{\smat{0&I}} \& K^d 
    \uar
    \\
    K^d \uar{I} \& \lar{I} K^d \uar{\smat{I\\J_d(0)}} \rar[swap]{J_d(0)} \& K^d \uar{I}
  \end{tikzcd}
\]

\paragraph{4 In} With four inbound:
{
  \input{drawing_macros}
\newcommand{\myscale}{.5}

\[
\begin{tikzcd}[ampersand replacement = \&, row sep = 2em, column sep = 2.5em,nodes={inner sep = 1.2em}, arrows={hookrightarrow}]
 \begin{tikzpicture}[scale=\myscale,baseline=(current bounding box.center),rotate=90]
    \definesandalpts
    \draw[fill] \shiftpt{(Sa)}{1} circle (2pt);
    \path[use as bounding box] \shiftpt{(Id)}{2} rectangle \shiftpt{(Ib)}{-1};
  \end{tikzpicture}
\rar \dar \&
   \begin{tikzpicture}[scale=\myscale,baseline=(current bounding box.center),rotate=90]
    \definesandalpts
    \drawuphk{black}{3}
    \drawdwhk{black}{3}
    \drawuphk{black}{2}
    \drawdwhk{black}{2}
    \drawuphk{black}{0}
    \drawdwhk{black}{0}
    \node[xshift=0em] at \shiftpt{(Sb)}{1} {$\cdots$};
    \path[use as bounding box] \shiftpt{(Id)}{3} rectangle \shiftpt{(Ib)}{-1};

    \shrunkuphk{LowerRowColor2}{3}
    \shrunkdwhk{LowerRowColor2}{3}
    \shrunkuphk{LowerRowColor2}{2}
    \shrunkdwhk{LowerRowColor2}{2}
    \shrunkuphk{LowerRowColor2}{0}
    \shrunkdwhk{LowerRowColor2}{0}

    \node[anchor=east, LowerRowColor2] at \shiftptno{(SaSh)}{2.5}{.2} {$z_{1}$};
    \node[anchor=east, LowerRowColor2] at \shiftptno{(SaSh)}{1.5}{.2} {$z_{2}$};
    \node[anchor=east, LowerRowColor2] at \shiftptno{(SaSh)}{-.5}{.2} {$z_{d}$};
  \end{tikzpicture}
  \dar \rar[hookleftarrow]  \&\dar
  \begin{tikzpicture}[scale=\myscale,baseline=(current bounding box.center),rotate=90]
    \definesandalpts
    \draw[fill] \shiftpt{(Sa)}{1} circle (2pt);
    \path[use as bounding box] \shiftpt{(Id)}{2} rectangle \shiftpt{(Ib)}{-1};
  \end{tikzpicture}
  \\
  \begin{tikzpicture}[scale=\myscale,baseline=(current bounding box.center),rotate=90]
    \definesandalpts    
    \drawuphk{black}{2}
    \drawstrp{black}{2}
    \drawuphk{black}{0}
    \drawstrp{black}{0}
    \node[xshift=0em] at \shiftpt{(Sb)}{1} {$\cdots$};
    \path[use as bounding box] \shiftpt{(Id)}{2} rectangle \shiftpt{(Ib)}{-1};
  \end{tikzpicture}
\rar\&
\begin{tikzpicture}[scale=\myscale,baseline=(current bounding box.center),rotate=90]
    \definesandalpts
    \drawuphk{black}{3}
    \drawstrp{black}{3}
    \drawdwhk{black}{3}
    \drawuphk{black}{2}
    \drawstrp{black}{2}
    \drawdwhk{black}{2}
    \drawuphk{black}{0}
    \drawstrp{black}{0}
    \drawdwhk{black}{0}
    \node[xshift=0em] at \shiftpt{(Sb)}{1} {$\cdots$};
    \path[use as bounding box] \shiftpt{(Id)}{3} rectangle \shiftpt{(Ib)}{-1};

    \shrunkuphk{UpperRowColor1}{3}
    \shrunkstrp{UpperRowColor1}{3}
    \shrunkuphk{UpperRowColor1}{2}
    \shrunkstrp{UpperRowColor1}{2}
    \shrunkuphk{UpperRowColor1}{0}
    \shrunkstrp{UpperRowColor1}{0}
    \expandstrp{UpperRowColor2}{3}
    \expanddwhk{UpperRowColor2}{3}{3}
    \expandstrp{UpperRowColor2}{2}
    \expanddwhk{UpperRowColor2}{2}{2}
    \expandstrp{UpperRowColor2}{0}
    \expanddwhk{UpperRowColor2}{0}{0}

    \node[anchor=west, UpperRowColor1] at \shiftptno{(IbEx)}{3.25}{-.1} {$a_1$};
    \node[anchor=west, UpperRowColor1] at \shiftptno{(IbEx)}{2.25}{-.1} {$a_2$};
    \node[anchor=west, UpperRowColor1] at \shiftptno{(IbEx)}{.25}{-.1} {$a_d$};
    \node[anchor=north, UpperRowColor2] at \shiftptno{(SdSh)}{2.75}{.9} {$a_{d+1}$};
    \node[anchor=north, UpperRowColor2] at \shiftptno{(SdSh)}{1.75}{.9} {$a_{d+2}$};
    \node[anchor=north, UpperRowColor2] at \shiftptno{(SdSh)}{-.25}{.9} {$a_{d+d}$};   
  \end{tikzpicture}
  \rar[hookleftarrow] \&
  \begin{tikzpicture}[scale=\myscale,baseline=(current bounding box.center),rotate=90]
  \definesandalpts    
    \drawstrp{black}{2}
    \drawdwhk{black}{2}
    \drawstrp{black}{0}
    \drawdwhk{black}{0}
    \node[xshift=0em] at \shiftpt{(Sb)}{1} {$\cdots$};
    \path[use as bounding box] \shiftpt{(Id)}{2} rectangle \shiftpt{(Ib)}{-1};
    \end{tikzpicture}
  \\
  \begin{tikzpicture}[scale=\myscale,baseline=(current bounding box.center),rotate=90]
    \definesandalpts
    \draw[fill] \shiftpt{(Sa)}{1} circle (2pt);
    \path[use as bounding box] \shiftpt{(Id)}{2} rectangle \shiftpt{(Ib)}{-1};
  \end{tikzpicture}
  \rar\uar\&
  \begin{tikzpicture}[scale=\myscale,baseline=(current bounding box.center),rotate=90]
    \definesandalpts
    \drawuphk{black}{3}
    \drawstrp{black}{3}
    \drawdwhk{black}{3}
    \drawuphk{black}{2}
    \drawstrp{black}{2}
    \drawdwhk{black}{2}
    \drawuphk{black}{0}
    \drawstrp{black}{0}
    \drawdwhk{black}{0}
    \node[xshift=0em] at \shiftpt{(Sb)}{1} {$\cdots$};
    \path[use as bounding box] \shiftpt{(Id)}{3} rectangle \shiftpt{(Ib)}{-1};

    \node [coordinate] (Ina) at \shiftpt{(Ia)}{-1.2} {};
    \node [coordinate] (Inb) at \shiftpt{(Ib)}{-0.8} {};

    \node [coordinate] (I0a) at \shiftpt{(Ia)}{-0.2} {};
    \node [coordinate] (I0b) at \shiftpt{(Ib)}{0.2} {};

    \node [coordinate] (I1a) at \shiftpt{(Ia)}{0.8} {};
    \node [coordinate] (I1b) at \shiftpt{(Ib)}{1.2} {};

    \node [coordinate] (I2a) at \shiftpt{(Ia)}{1.8} {};
    \node [coordinate] (I2b) at \shiftpt{(Ib)}{2.2} {};

    \expanduphk{LowerRowColor1}{3}{3.1}
    \expandstrp{LowerRowColor1}{3.1}

    \expandstrp{LowerRowColor1}{2.9}
    \expanddwlegs{LowerRowColor1}{3}{2.9}
    \expanduplegs{LowerRowColor1}{2}{2.1}
    \expandstrp{LowerRowColor1}{2.1}

    \expandstrp{LowerRowColor1}{1.9}
    \expanddwlegs{LowerRowColor1}{2}{1.9}

    \expanduplegs{LowerRowColor1}{0}{0.1}
    \expandstrp{LowerRowColor1}{0.1}
    \draw[fill, white] ($ (I2a)!0.35!(I2b) $) rectangle ($ (I2b)!0.35!(I2a) $);
    \draw[fill, white] ($ (I1a)!0.35!(I1b) $) rectangle ($ (I1b)!0.35!(I1a) $);
    \draw[fill, white] ($ (I0a)!0.35!(I0b) $) rectangle ($ (I0b)!0.35!(I0a) $);
    \draw[fill, white] ($ (Ina)!0.35!(Inb) $) rectangle ($ (Inb)!0.35!(Ina) $);

    \node[anchor=west, LowerRowColor1] at \shiftptno{(IbEx)}{3.65}{-.1} {$z_{d+1}$};
    \node[anchor=west, LowerRowColor1] at \shiftptno{(IbEx)}{2.75}{-.1} {$z_{d+2}$};
    \node[anchor=west, LowerRowColor1] at \shiftptno{(IbEx)}{.5}{-.1} {$z_{d+d}$};
  \end{tikzpicture}
  \uar\rar[hookleftarrow] \&\uar
  \begin{tikzpicture}[scale=\myscale,baseline=(current bounding box.center),rotate=90]
    \definesandalpts
    \draw[fill] \shiftpt{(Sa)}{1} circle (2pt);
    \path[use as bounding box] \shiftpt{(Id)}{2} rectangle \shiftpt{(Ib)}{-1};
  \end{tikzpicture}
\end{tikzcd}
\]
}
for the persistence module:
\[
  \begin{tikzcd}[ampersand replacement=\&]
    0 \rar\dar \&
    K^d \dar{\smat{I\\I}}\&
    0 \lar \dar \\
    K^d
    \rar{\smat{I\\0}}
    \&
    K^{2d}
    \& K^d \lar{\smat{0\\I}}
    \\
    0 \uar \rar \& K^d \uar{\smat{I\\J_d(0)}} \& 0 \lar\uar
  \end{tikzcd}.
\]

\paragraph{2 In 2 Out, version 1} Two inbound and two outbound, first type:
{
  \input{drawing_macros}
\newcommand{\myscale}{.5}

\[
\begin{tikzcd}[ampersand replacement = \&, row sep = 2em, column sep = 2.5em,nodes={inner sep = 1.2em}, arrows={hookrightarrow}]
  \begin{tikzpicture}[scale=\myscale,baseline=(current bounding box.center),rotate=90]
    \definesandalpts    
    \drawuphk{black}{2}
    \drawstrp{black,fill}{2}
    \drawdwhk{black,fill}{2}
    \drawuphk{black}{0}
    \drawstrp{black,fill}{0}
    \drawdwhk{black,fill}{0}
    \node[xshift=0em] at \shiftpt{(Sb)}{1} {$\cdots$};
    \path[use as bounding box] \shiftpt{(Id)}{2} rectangle \shiftpt{(Ib)}{-1};
  \end{tikzpicture}
\dar \rar[hookleftarrow] \&
   \begin{tikzpicture}[scale=\myscale,baseline=(current bounding box.center),rotate=90]
    \definesandalpts
    \drawuphk{black}{3}
    \drawdwhk{black}{3}
    \drawuphk{black}{2}
    \drawdwhk{black}{2}
    \drawuphk{black}{0}
    \drawdwhk{black}{0}
    \node[xshift=0em] at \shiftpt{(Sb)}{1} {$\cdots$};
    \path[use as bounding box] \shiftpt{(Id)}{3} rectangle \shiftpt{(Ib)}{-1};

    \shrunkuphk{LowerRowColor2}{3}
    \shrunkdwhk{LowerRowColor2}{3}
    \shrunkuphk{LowerRowColor2}{2}
    \shrunkdwhk{LowerRowColor2}{2}
    \shrunkuphk{LowerRowColor2}{0}
    \shrunkdwhk{LowerRowColor2}{0}

    \node[anchor=east, LowerRowColor2] at \shiftptno{(SaSh)}{2.5}{.2} {$z_{1}$};
    \node[anchor=east, LowerRowColor2] at \shiftptno{(SaSh)}{1.5}{.2} {$z_{2}$};
    \node[anchor=east, LowerRowColor2] at \shiftptno{(SaSh)}{-.5}{.2} {$z_{d}$};
  \end{tikzpicture}
  \rar \dar \&
  \begin{tikzpicture}[scale=\myscale,baseline=(current bounding box.center),rotate=90]
    \definesandalpts    
    \drawuphk{black,fill}{2}
    \drawstrp{black,fill}{2}
    \drawdwhk{black}{2}
    \drawuphk{black,fill}{0}
    \drawstrp{black,fill}{0}
    \drawdwhk{black}{0}
    \node[xshift=0em] at \shiftpt{(Sb)}{1} {$\cdots$};
    \path[use as bounding box] \shiftpt{(Id)}{2} rectangle \shiftpt{(Ib)}{-1};
  \end{tikzpicture}
\dar  \\
  \begin{tikzpicture}[scale=\myscale,baseline=(current bounding box.center),rotate=90]
    \definesandalpts    
    \drawuphk{black}{2}
    \drawstrp{black,fill}{2}
    \drawdwhk{black,fill}{2}
    \drawuphk{black}{0}
    \drawstrp{black,fill}{0}
    \drawdwhk{black,fill}{0}
    \node[xshift=0em] at \shiftpt{(Sb)}{1} {$\cdots$};    
    \path[use as bounding box] \shiftpt{(Id)}{2} rectangle \shiftpt{(Ib)}{-1};
  \end{tikzpicture}
\rar[hookleftarrow] \&
\begin{tikzpicture}[scale=\myscale,baseline=(current bounding box.center),rotate=90]
    \definesandalpts
    \drawuphk{black}{3}
    \drawstrp{black}{3}
    \drawdwhk{black}{3}
    \drawuphk{black}{2}
    \drawstrp{black}{2}
    \drawdwhk{black}{2}
    \drawuphk{black}{0}
    \drawstrp{black}{0}
    \drawdwhk{black}{0}
    \node[xshift=0em] at \shiftpt{(Sb)}{1} {$\cdots$};
    \path[use as bounding box] \shiftpt{(Id)}{3} rectangle \shiftpt{(Ib)}{-1};

    \shrunkuphk{UpperRowColor1}{3}
    \shrunkstrp{UpperRowColor1}{3}
    \shrunkuphk{UpperRowColor1}{2}
    \shrunkstrp{UpperRowColor1}{2}
    \shrunkuphk{UpperRowColor1}{0}
    \shrunkstrp{UpperRowColor1}{0}
    \expandstrp{UpperRowColor2}{3}
    \expanddwhk{UpperRowColor2}{3}{3}
    \expandstrp{UpperRowColor2}{2}
    \expanddwhk{UpperRowColor2}{2}{2}
    \expandstrp{UpperRowColor2}{0}
    \expanddwhk{UpperRowColor2}{0}{0}

    \node[anchor=west, UpperRowColor1] at \shiftptno{(IbEx)}{3.25}{-.1} {$a_1$};
    \node[anchor=west, UpperRowColor1] at \shiftptno{(IbEx)}{2.25}{-.1} {$a_2$};
    \node[anchor=west, UpperRowColor1] at \shiftptno{(IbEx)}{.25}{-.1} {$a_d$};
    \node[anchor=north, UpperRowColor2] at \shiftptno{(SdSh)}{2.75}{.9} {$a_{d+1}$};
    \node[anchor=north, UpperRowColor2] at \shiftptno{(SdSh)}{1.75}{.9} {$a_{d+2}$};
    \node[anchor=north, UpperRowColor2] at \shiftptno{(SdSh)}{-.25}{.9} {$a_{d+d}$};
  \end{tikzpicture}
  \rar\&
 \begin{tikzpicture}[scale=\myscale,baseline=(current bounding box.center),rotate=90]
    \definesandalpts    
    \drawuphk{black,fill}{2}
    \drawstrp{black,fill}{2}
    \drawdwhk{black}{2}
    \drawuphk{black,fill}{0}
    \drawstrp{black,fill}{0}
    \drawdwhk{black}{0}
    \node[xshift=0em] at \shiftpt{(Sb)}{1} {$\cdots$};
    \path[use as bounding box] \shiftpt{(Id)}{2} rectangle \shiftpt{(Ib)}{-1};
  \end{tikzpicture}
  \\
  \begin{tikzpicture}[scale=\myscale,baseline=(current bounding box.center),rotate=90]
    \definesandalpts    
    \drawuphk{black}{2}
    \drawstrp{black,fill}{2}
    \drawdwhk{black,fill}{2}
    \drawuphk{black}{0}
    \drawstrp{black,fill}{0}
    \drawdwhk{black,fill}{0}
    \node[xshift=0em] at \shiftpt{(Sb)}{1} {$\cdots$};
    \path[use as bounding box] \shiftpt{(Id)}{2} rectangle \shiftpt{(Ib)}{-1};
  \end{tikzpicture}
  \rar[hookleftarrow] \uar\&
  \begin{tikzpicture}[scale=\myscale,baseline=(current bounding box.center),rotate=90]
    \definesandalpts
    \drawuphk{black}{3}
    \drawstrp{black}{3}
    \drawdwhk{black}{3}
    \drawuphk{black}{2}
    \drawstrp{black}{2}
    \drawdwhk{black}{2}
    \drawuphk{black}{0}
    \drawstrp{black}{0}
    \drawdwhk{black}{0}
    \node[xshift=0em] at \shiftpt{(Sb)}{1} {$\cdots$};
    \path[use as bounding box] \shiftpt{(Id)}{3} rectangle \shiftpt{(Ib)}{-1};

    \node [coordinate] (Ina) at \shiftpt{(Ia)}{-1.2} {};
    \node [coordinate] (Inb) at \shiftpt{(Ib)}{-0.8} {};

    \node [coordinate] (I0a) at \shiftpt{(Ia)}{-0.2} {};
    \node [coordinate] (I0b) at \shiftpt{(Ib)}{0.2} {};

    \node [coordinate] (I1a) at \shiftpt{(Ia)}{0.8} {};
    \node [coordinate] (I1b) at \shiftpt{(Ib)}{1.2} {};

    \node [coordinate] (I2a) at \shiftpt{(Ia)}{1.8} {};
    \node [coordinate] (I2b) at \shiftpt{(Ib)}{2.2} {};

    \expanduphk{LowerRowColor1}{3}{3.1}
    \expandstrp{LowerRowColor1}{3.1}

    \expandstrp{LowerRowColor1}{2.9}
    \expanddwlegs{LowerRowColor1}{3}{2.9}
    \expanduplegs{LowerRowColor1}{2}{2.1}
    \expandstrp{LowerRowColor1}{2.1}

    \expandstrp{LowerRowColor1}{1.9}
    \expanddwlegs{LowerRowColor1}{2}{1.9}

    \expanduplegs{LowerRowColor1}{0}{0.1}
    \expandstrp{LowerRowColor1}{0.1}
    \draw[fill, white] ($ (I2a)!0.35!(I2b) $) rectangle ($ (I2b)!0.35!(I2a) $);
    \draw[fill, white] ($ (I1a)!0.35!(I1b) $) rectangle ($ (I1b)!0.35!(I1a) $);
    \draw[fill, white] ($ (I0a)!0.35!(I0b) $) rectangle ($ (I0b)!0.35!(I0a) $);
    \draw[fill, white] ($ (Ina)!0.35!(Inb) $) rectangle ($ (Inb)!0.35!(Ina) $);

    \node[anchor=west, LowerRowColor1] at \shiftptno{(IbEx)}{3.65}{-.1} {$z_{d+1}$};
    \node[anchor=west, LowerRowColor1] at \shiftptno{(IbEx)}{2.75}{-.1} {$z_{d+2}$};
    \node[anchor=west, LowerRowColor1] at \shiftptno{(IbEx)}{.5}{-.1} {$z_{d+d}$};
  \end{tikzpicture}
  \uar\rar \&\uar
 \begin{tikzpicture}[scale=\myscale,baseline=(current bounding box.center),rotate=90]
    \definesandalpts    
    \drawuphk{black,fill}{2}
    \drawstrp{black,fill}{2}
    \drawdwhk{black}{2}
    \drawuphk{black,fill}{0}
    \drawstrp{black,fill}{0}
    \drawdwhk{black}{0}
    \node[xshift=0em] at \shiftpt{(Sb)}{1} {$\cdots$};
    \path[use as bounding box] \shiftpt{(Id)}{2} rectangle \shiftpt{(Ib)}{-1};
  \end{tikzpicture}
\end{tikzcd}
\]
}
realizes
\[
\begin{tikzcd}[ampersand replacement=\&]
  K^d \dar{I} \& \lar{I}
  K^d \rar{I} \dar{\smat{I\\I}} \&
  K^d \dar{I} \\
  K^d
  \&
  \lar[swap]{\smat{I&0}}
  K^{2d}
  \rar{\smat{0&I}} \& K^d 
  \\
  K^d \uar{I} \& \lar{I} K^d \uar{\smat{I\\J_d(0)}} \rar{J_d(0)} \& K^d \uar{I}
\end{tikzcd}
\]

\paragraph{2 In 2 Out, version 2} Two inbound and two outbound, second type:
{
  \input{drawing_macros}
\newcommand{\myscale}{.5}
\[
  \begin{tikzcd}[ampersand replacement = \&, row sep = 2em, column sep = 2.5em,nodes={inner sep = 1.2em}, arrows={hookrightarrow}]
    \begin{tikzpicture}[scale=\myscale,baseline=(current bounding box.center),rotate=90]
      \definesandalpts
      \drawuphk{black}{2}
      \drawstrp{black,fill}{2}
      \drawdwhk{black,fill}{2}
      \drawuphk{black}{0}
      \drawstrp{black,fill}{0}
      \drawdwhk{black,fill}{0}
      \node[xshift=0em] at \shiftpt{(Sb)}{1} {$\cdots$};
      \path[use as bounding box] \shiftpt{(Id)}{2} rectangle \shiftpt{(Ib)}{-1};
    \end{tikzpicture}
    \dar \&\lar
    \begin{tikzpicture}[scale=\myscale,baseline=(current bounding box.center),rotate=90]
      \definesandalpts
      \drawuphk{black}{3}
      \drawdwhk{black}{3}
      \drawuphk{black}{2}
      \drawdwhk{black}{2}
      \drawuphk{black}{0}
      \drawdwhk{black}{0}
      \node[xshift=0em] at \shiftpt{(Sb)}{1} {$\cdots$};
      \path[use as bounding box] \shiftpt{(Id)}{3} rectangle \shiftpt{(Ib)}{-1};

      \shrunkuphk{LowerRowColor2}{3}
      \shrunkdwhk{LowerRowColor2}{3}
      \shrunkuphk{LowerRowColor2}{2}
      \shrunkdwhk{LowerRowColor2}{2}
      \shrunkuphk{LowerRowColor2}{0}
      \shrunkdwhk{LowerRowColor2}{0}

      \node[anchor=east, LowerRowColor2] at \shiftptno{(SaSh)}{2.5}{.2} {$z_{1}$};
      \node[anchor=east, LowerRowColor2] at \shiftptno{(SaSh)}{1.5}{.2} {$z_{2}$};
      \node[anchor=east, LowerRowColor2] at \shiftptno{(SaSh)}{-.5}{.2} {$z_{d}$};
    \end{tikzpicture}
    \dar \& \lar
    \begin{tikzpicture}[scale=\myscale,baseline=(current bounding box.center),rotate=90]
      \definesandalpts
      \draw[fill] \shiftpt{(Sa)}{1} circle (2pt);
      \path[use as bounding box] \shiftpt{(Id)}{2} rectangle \shiftpt{(Ib)}{-1};
    \end{tikzpicture}
    \dar  \\
    \begin{tikzpicture}[scale=\myscale,baseline=(current bounding box.center),rotate=90]
      \definesandalpts
      \drawuphk{black}{2}
      \drawstrp{black,fill}{2}
      \drawdwhk{black,fill}{2}
      \drawuphk{black}{0}
      \drawstrp{black,fill}{0}
      \drawdwhk{black,fill}{0}
      \node[xshift=0em] at \shiftpt{(Sb)}{1} {$\cdots$};
      \path[use as bounding box] \shiftpt{(Id)}{2} rectangle \shiftpt{(Ib)}{-1};
    \end{tikzpicture}
    \dar \&\lar
    \begin{tikzpicture}[scale=\myscale,baseline=(current bounding box.center),rotate=90]
      \definesandalpts
      \drawuphk{black}{3}
      \drawstrp{black}{3}
      \drawdwhk{black}{3}
      \drawuphk{black}{2}
      \drawstrp{black}{2}
      \drawdwhk{black}{2}
      \drawuphk{black}{0}
      \drawstrp{black}{0}
      \drawdwhk{black}{0}
      \node[xshift=0em] at \shiftpt{(Sb)}{1} {$\cdots$};
      \path[use as bounding box] \shiftpt{(Id)}{3} rectangle \shiftpt{(Ib)}{-1};

      \shrunkuphk{UpperRowColor1}{3}
      \shrunkstrp{UpperRowColor1}{3}
      \shrunkuphk{UpperRowColor1}{2}
      \shrunkstrp{UpperRowColor1}{2}
      \shrunkuphk{UpperRowColor1}{0}
      \shrunkstrp{UpperRowColor1}{0}
      \expandstrp{UpperRowColor2}{3}
      \expanddwhk{UpperRowColor2}{3}{3}
      \expandstrp{UpperRowColor2}{2}
      \expanddwhk{UpperRowColor2}{2}{2}
      \expandstrp{UpperRowColor2}{0}
      \expanddwhk{UpperRowColor2}{0}{0}

      \node[anchor=west, UpperRowColor1] at \shiftptno{(IbEx)}{3.25}{-.1} {$a_1$};
      \node[anchor=west, UpperRowColor1] at \shiftptno{(IbEx)}{2.25}{-.1} {$a_2$};
      \node[anchor=west, UpperRowColor1] at \shiftptno{(IbEx)}{.25}{-.1} {$a_d$};
      \node[anchor=north, UpperRowColor2] at \shiftptno{(SdSh)}{2.75}{.9} {$a_{d+1}$};
      \node[anchor=north, UpperRowColor2] at \shiftptno{(SdSh)}{1.75}{.9} {$a_{d+2}$};
      \node[anchor=north, UpperRowColor2] at \shiftptno{(SdSh)}{-.25}{.9} {$a_{d+d}$};
    \end{tikzpicture}
    \dar \& \lar
    \begin{tikzpicture}[scale=\myscale,baseline=(current bounding box.center),rotate=90]
      \definesandalpts
      \drawuphk{black}{3}
      \drawstrp{black}{3}
      \drawdwhk{black}{3}
      \drawuphk{black}{2}
      \drawstrp{black}{2}
      \drawdwhk{black}{2}
      \drawuphk{black}{0}
      \drawstrp{black}{0}
      \drawdwhk{black}{0}
      \node[xshift=0em] at \shiftpt{(Sb)}{1} {$\cdots$};
      \path[use as bounding box] \shiftpt{(Id)}{3} rectangle \shiftpt{(Ib)}{-1};

      \node [coordinate] (Ina) at \shiftpt{(Ia)}{-1.2} {};
      \node [coordinate] (Inb) at \shiftpt{(Ib)}{-0.8} {};

      \node [coordinate] (I0a) at \shiftpt{(Ia)}{-0.2} {};
      \node [coordinate] (I0b) at \shiftpt{(Ib)}{0.2} {};

      \node [coordinate] (I1a) at \shiftpt{(Ia)}{0.8} {};
      \node [coordinate] (I1b) at \shiftpt{(Ib)}{1.2} {};

      \node [coordinate] (I2a) at \shiftpt{(Ia)}{1.8} {};
      \node [coordinate] (I2b) at \shiftpt{(Ib)}{2.2} {};

      \expanduphk{LowerRowColor1}{3}{3.1}
      \expandstrp{LowerRowColor1}{3.1}

      \expandstrp{LowerRowColor1}{2.9}
      \expanddwlegs{LowerRowColor1}{3}{2.9}
      \expanduplegs{LowerRowColor1}{2}{2.1}
      \expandstrp{LowerRowColor1}{2.1}

      \expandstrp{LowerRowColor1}{1.9}
      \expanddwlegs{LowerRowColor1}{2}{1.9}

      \expanduplegs{LowerRowColor1}{0}{0.1}
      \expandstrp{LowerRowColor1}{0.1}
      \draw[fill, white] ($ (I2a)!0.35!(I2b) $) rectangle ($ (I2b)!0.35!(I2a) $);
      \draw[fill, white] ($ (I1a)!0.35!(I1b) $) rectangle ($ (I1b)!0.35!(I1a) $);
      \draw[fill, white] ($ (I0a)!0.35!(I0b) $) rectangle ($ (I0b)!0.35!(I0a) $);
      \draw[fill, white] ($ (Ina)!0.35!(Inb) $) rectangle ($ (Inb)!0.35!(Ina) $);

      \node[anchor=west, LowerRowColor1] at \shiftptno{(IbEx)}{3.65}{-.1} {$z_{d+1}$};
      \node[anchor=west, LowerRowColor1] at \shiftptno{(IbEx)}{2.75}{-.1} {$z_{d+2}$};
      \node[anchor=west, LowerRowColor1] at \shiftptno{(IbEx)}{.5}{-.1} {$z_{d+d}$};
    \end{tikzpicture}
    \dar
    \\
    \begin{tikzpicture}[scale=\myscale,baseline=(current bounding box.center),rotate=90]
      \definesandalpts
      \drawuphk{black, fill}{2}
      \drawstrp{black,fill}{2}
      \drawdwhk{black,fill}{2}
      \drawuphk{black, fill}{0}
      \drawstrp{black,fill}{0}
      \drawdwhk{black,fill}{0}
      \node[xshift=0em] at \shiftpt{(Sb)}{1} {$\cdots$};
      \path[use as bounding box] \shiftpt{(Id)}{2} rectangle \shiftpt{(Ib)}{-1};
    \end{tikzpicture}
    \&\lar
    \begin{tikzpicture}[scale=\myscale,baseline=(current bounding box.center),rotate=90]
      \definesandalpts
      \drawuphk{black,fill}{3}
      \drawstrp{black,fill}{3}
      \drawdwhk{black}{3}
      \drawuphk{black,fill}{2}
      \drawstrp{black,fill}{2}
      \drawdwhk{black}{2}
      \drawuphk{black,fill}{0}
      \drawstrp{black,fill}{0}
      \drawdwhk{black}{0}
      \node[xshift=0em] at \shiftpt{(Sb)}{1} {$\cdots$};
    \end{tikzpicture}
    \& \lar
    \begin{tikzpicture}[scale=\myscale,baseline=(current bounding box.center),rotate=90]
      \definesandalpts
      \drawuphk{black,fill}{3}
      \drawstrp{black,fill}{3}
      \drawdwhk{black}{3}
      \drawuphk{black,fill}{2}
      \drawstrp{black,fill}{2}
      \drawdwhk{black}{2}
      \drawuphk{black,fill}{0}
      \drawstrp{black,fill}{0}
      \drawdwhk{black}{0}
      \node[xshift=0em] at \shiftpt{(Sb)}{1} {$\cdots$};
    \end{tikzpicture}
  \end{tikzcd}
\]
}
giving
\[
  \begin{tikzcd}[ampersand replacement=\&]
    K^d \dar{I} \& \lar{I}
    K^d  \dar{\smat{I\\I}} \&
   0 \lar \dar \\
    K^d \dar
    \&
    \lar{\smat{I&0}}
    \dar[swap]{\smat{0&I}}
    K^{2d}
     \& K^d \dar{J_d(0)} \lar{\smat{I\\J_d(0)}}
    \\
    0  \& \lar K^d  \& K^d  \lar{I}
  \end{tikzcd}
\]

\FloatBarrier
\subsubsection{No inbound arrow}

For the case with no inbound arrow, we use a slightly different construction.
We use the same space for the central vertex, but with a different choice of basis for homology:
{
  {
  \input{drawing_macros}
  \newcommand{\myscale}{.8}

  \[
    \begin{tikzpicture}[scale=\myscale,baseline=(current bounding box.center),rotate=90]
      \definesandalpts
      \drawuphk{black}{3}
      \drawstrp{black}{3}
      \drawdwhk{black}{3}
      \drawuphk{black}{2}
      \drawstrp{black}{2}
      \drawdwhk{black}{2}
      \drawuphk{black}{0}
      \drawstrp{black}{0}
      \drawdwhk{black}{0}
      \node at \shiftpt{(Sb)}{1} {$\hdots$};
      \path[use as bounding box] \shiftpt{(Id)}{3} rectangle \shiftpt{(Ib)}{-1};

      \expanduphk{LowerRowColor1}{3}{3.1}
      \expandstrp{LowerRowColor1}{3.1}
      \expandstrp{LowerRowColor1}{2.9}
      \expanddwlegs{LowerRowColor1}{3}{2.9}
      \expanduplegs{LowerRowColor1}{2}{2.1}
      \expandstrp{LowerRowColor1}{2.1}
      \expandstrp{LowerRowColor1}{1.9}
      \expanddwlegs{LowerRowColor1}{2}{1.9}
      \expanduplegs{LowerRowColor1}{0}{0.1}
      \expandstrp{LowerRowColor1}{0.1}

      \shrunkuphk{LowerRowColor2}{3}
      \shrunkdwhk{LowerRowColor2}{3}
      \shrunkuphk{LowerRowColor2}{2}
      \shrunkdwhk{LowerRowColor2}{2}
      \shrunkuphk{LowerRowColor2}{0}
      \shrunkdwhk{LowerRowColor2}{0}

      \node[anchor=east, LowerRowColor2] at \shiftptno{(SaSh)}{2.5}{-.3} {$z_{1}$};
      \node[anchor=east, LowerRowColor2] at \shiftptno{(SaSh)}{1.5}{-.3} {$z_{2}$};
      \node[anchor=east, LowerRowColor2] at \shiftptno{(SaSh)}{-.5}{-.3} {$z_{d}$};
      \node[anchor=west, LowerRowColor1] at \shiftptno{(IbEx)}{3.5}{.4} {$z_{d+1}$};
      \node[anchor=west, LowerRowColor1] at \shiftptno{(IbEx)}{2.5}{.4} {$z_{d+2}$};
      \node[anchor=west, LowerRowColor1] at \shiftptno{(IbEx)}{.5}{.4} {$z_{d+d}$};
    \end{tikzpicture}
  \]
}
}%
In the diagram in Figure~\ref{fig:f06_4out}, which realizes
\[
  \begin{tikzcd}[ampersand replacement=\&]
    0 \& \lar
    K^d  \rar \&
    0 \\
    K^d \uar \dar
    \&
    \lar{\smat{I&0}}
    \uar[swap]{\smat{I&J_d(0)}}
    \dar {\smat{I&I}}
    K^{2d} \rar{\smat{0&I}}
    \& K^d \dar \uar
    \\
    0  \& \lar K^d  \rar \& 0
  \end{tikzcd},
\]
the leftmost space on the second row is obtained by \emph{covering} some cycles on the center so that the homology generators represented by $z_{d+1},\hdots, z_{d+d}$ are mapped to zero, and $z_1,\hdots, z_d$ are essentially unaffected. The result is a structure that looks like a tent with no floor; or in the original analogy, it is no longer a sandal but a shoe whose sole has $d$ holes: $z_1$ to $z_d$. Going to the space in the upper left, we simply fill these in.

\begin{figure}[ht]\caption{Four arrows outbound}
  \label{fig:f06_4out}
\input{drawing_macros}
\newcommand{\myscale}{.5}


\newcommand{\halftentcover}[3]{
  \draw[rounded corners, fill, #1]
  \shiftpt{(Ib)}{#2} -- \shiftpt{(Ib)}{#3} -- \shiftptno{(Ia)}{#3}{-0.05} -- \shiftpt{(Ic)}{#3} --
  \shiftptno{(Sd)}{#3}{-0.05} -- \shiftptno{(Id)}{#2}{-0.05} -- \shiftpt{(Ic)}{#2};
}

\newcommand{\halftentup}[3]{
  \begin{scope}[transparency group, opacity=0.5]
    \draw[rounded corners, fill, #1] \shiftpt{(Sb)}{#3} -- \shiftptno{(Sc)}{#3}{-0.05} -- \shiftptno{(Sd)}{#3-0.2}{-0.05} -- \shiftptno{(Id)}{#2}{-0.05} -- \shiftpt{(Ic)}{#2} --  \shiftpt{(Ib)}{#2};

    \draw[rounded corners, fill, #1] \shiftpt{(Sa)}{#2} -- \shiftptno{(Sd)}{#3}{-0.05} -- \shiftptno{(Id)}{#2}{-0.05} -- \shiftpt{(Ia)}{#2};
  \end{scope}

  \draw[rounded corners, semitransparent, #1] \shiftpt{(Sb)}{#3} -- \shiftptno{(Sc)}{#3}{-0.05} -- \shiftptno{(Sd)}{#3-0.2}{-0.05} -- \shiftptno{(Id)}{#2}{-0.05} -- \shiftpt{(Ic)}{#2} --  \shiftpt{(Ib)}{#2};
}

\[
  \begin{tikzcd}[ampersand replacement = \&, row sep = 2em, column sep = 2.5em,nodes={inner sep = 1.2em}, arrows={hookrightarrow}]
    \begin{tikzpicture}[scale=\myscale,baseline=(current bounding box.center),rotate=90]
      \definesandalpts
      \drawuphk{black, fill}{3}
      \drawstrp{black,fill}{3}
      \drawdwhk{black,fill}{3}
      \drawuphk{black, fill}{2}
      \drawstrp{black,fill}{2}
      \drawdwhk{black,fill}{2}
      \drawuphk{black, fill}{0}
      \drawstrp{black,fill}{0}
      \drawdwhk{black,fill}{0}
      \node[xshift=0em] at \shiftpt{(Sb)}{1} {$\cdots$};
      \path[use as bounding box] \shiftpt{(Id)}{3} rectangle \shiftpt{(Ib)}{-1};

      \halftentcover{semitransparent, black}{1}{3}

      \halftentup{black}{0}{0}
    \end{tikzpicture}
    \&\lar
    \begin{tikzpicture}[scale=\myscale,baseline=(current bounding box.center),rotate=90]
      \definesandalpts
      \drawuphk{black,fill}{3}
      \drawstrp{black,fill,rounded corners=0pt}{3}
      \drawdwhk{black}{3}
      \drawuphk{black,fill}{2}
      \drawstrp{black,fill}{2}
      \drawdwhk{black}{2}
      \drawuphk{black,fill}{0}
      \drawstrp{black,fill}{0}
      \drawdwhk{black}{0}
      \node[xshift=0em] at \shiftpt{(Sb)}{1} {$\cdots$};
      \path[use as bounding box] \shiftpt{(Id)}{3} rectangle \shiftpt{(Ib)}{-1};
    \end{tikzpicture} \rar \&
    \begin{tikzpicture}[scale=\myscale,baseline=(current bounding box.center),rotate=90]
      \definesandalpts
      \drawuphk{black, fill}{3}
      \drawstrp{black,fill}{3}
      \drawdwhk{black,fill}{3}
      \drawuphk{black, fill}{2}
      \drawstrp{black,fill}{2}
      \drawdwhk{black,fill}{2}
      \drawuphk{black, fill}{0}
      \drawstrp{black,fill}{0}
      \drawdwhk{black,fill}{0}
      \node[xshift=0em] at \shiftpt{(Sb)}{1} {$\cdots$};
      \path[use as bounding box] \shiftpt{(Id)}{3} rectangle \shiftpt{(Ib)}{-1};
    \end{tikzpicture} \\
    \begin{tikzpicture}[scale=\myscale,baseline=(current bounding box.center),rotate=90]
      \definesandalpts
      \drawuphk{black}{3}
      \drawstrp{black}{3}
      \drawdwhk{black}{3}
      \drawuphk{black}{2}
      \drawstrp{black}{2}
      \drawdwhk{black}{2}
      \drawuphk{black}{0}
      \drawstrp{black}{0}
      \drawdwhk{black}{0}
      \node[xshift=0em] at \shiftpt{(Sb)}{1} {$\cdots$};
      \path[use as bounding box] \shiftpt{(Id)}{3} rectangle \shiftpt{(Ib)}{-1};

      \halftentup{LowerRowColor1}{0}{0}
      \halftentcover{semitransparent, LowerRowColor1}{1}{3}

      \shrunkuphk{LowerRowColor2}{3}
      \shrunkdwhk{LowerRowColor2}{3}
      \shrunkuphk{LowerRowColor2}{2}
      \shrunkdwhk{LowerRowColor2}{2}
      \shrunkuphk{LowerRowColor2}{0}
      \shrunkdwhk{LowerRowColor2}{0}
      \node[anchor=east, LowerRowColor2] at \shiftptno{(SaSh)}{2.25}{-.3} {$z_{1}$};
      \node[anchor=east, LowerRowColor2] at \shiftptno{(SaSh)}{1.25}{-.3} {$z_{2}$};
      \node[anchor=east, LowerRowColor2] at \shiftptno{(SaSh)}{-.75}{-.3} {$z_{d}$};
    \end{tikzpicture}
    \dar \uar \&\lar
    \begin{tikzpicture}[scale=\myscale,baseline=(current bounding box.center), rotate=90]
      \definesandalpts
      \drawuphk{black}{3}
      \drawstrp{black}{3}
      \drawdwhk{black}{3}
      \drawuphk{black}{2}
      \drawstrp{black}{2}
      \drawdwhk{black}{2}
      \drawuphk{black}{0}
      \drawstrp{black}{0}
      \drawdwhk{black}{0}
      \node[xshift=0em] at \shiftpt{(Sb)}{1} {$\cdots$};
      \path[use as bounding box] \shiftpt{(Id)}{3} rectangle \shiftpt{(Ib)}{-1};

      \expanduphk{LowerRowColor1}{3}{3.1}
      \expandstrp{LowerRowColor1}{3.1}
      \expandstrp{LowerRowColor1}{2.9}
      \expanddwlegs{LowerRowColor1}{3}{2.9}
      \expanduplegs{LowerRowColor1}{2}{2.1}
      \expandstrp{LowerRowColor1}{2.1}
      \expandstrp{LowerRowColor1}{1.9}
      \expanddwlegs{LowerRowColor1}{2}{1.9}
      \expanduplegs{LowerRowColor1}{0}{0.1}
      \expandstrp{LowerRowColor1}{0.1}

      \shrunkuphk{LowerRowColor2}{3}
      \shrunkdwhk{LowerRowColor2}{3}
      \shrunkuphk{LowerRowColor2}{2}
      \shrunkdwhk{LowerRowColor2}{2}
      \shrunkuphk{LowerRowColor2}{0}
      \shrunkdwhk{LowerRowColor2}{0}

      \node[anchor=east, LowerRowColor2] at \shiftptno{(SaSh)}{2.25}{-.3} {$z_{1}$};
      \node[anchor=east, LowerRowColor2] at \shiftptno{(SaSh)}{1.25}{-.3} {$z_{2}$};
      \node[anchor=east, LowerRowColor2] at \shiftptno{(SaSh)}{-.75}{-.3} {$z_{d}$};

      \node[anchor=north, LowerRowColor1] at \shiftptno{(SdSh)}{3.5}{.9} {$z_{d+1}$};
      \node[anchor=north, LowerRowColor1] at \shiftptno{(SdSh)}{2.5}{.9} {$z_{d+2}$};
      \node[anchor=north, LowerRowColor1] at \shiftptno{(SdSh)}{.75}{.9} {$z_{d+d}$};
    \end{tikzpicture}
    \rar \uar \dar\&
    \begin{tikzpicture}[scale=\myscale,baseline=(current bounding box.center), rotate=90]
    \definesandalpts
    \drawuphk{black,fill}{3}
    \drawstrp{black}{3}
    \drawdwhk{black,fill}{3}
    \drawuphk{black,fill}{2}
    \drawstrp{black}{2}
    \drawdwhk{black,fill}{2}
    \drawuphk{black,fill}{0}
    \drawstrp{black}{0}
    \drawdwhk{black,fill}{0}
    \node[xshift=0em] at \shiftpt{(Sb)}{1} {$\cdots$};
    \path[use as bounding box] \shiftpt{(Id)}{3} rectangle \shiftpt{(Ib)}{-1};

    \expanduphk{LowerRowColor1}{3}{3.1}
    \expandstrp{LowerRowColor1}{3.1}
    \expandstrp{LowerRowColor1}{2.9}
    \expanddwlegs{LowerRowColor1}{3}{2.9}
    \expanduplegs{LowerRowColor1}{2}{2.1}
    \expandstrp{LowerRowColor1}{2.1}
    \expandstrp{LowerRowColor1}{1.9}
    \expanddwlegs{LowerRowColor1}{2}{1.9}
    \expanduplegs{LowerRowColor1}{0}{0.1}
    \expandstrp{LowerRowColor1}{0.1}
    
    \node[anchor=north, LowerRowColor1] at \shiftptno{(SdSh)}{3.5}{.9} {$z_{d+1}$};
    \node[anchor=north, LowerRowColor1] at \shiftptno{(SdSh)}{2.5}{.9} {$z_{d+2}$};
    \node[anchor=north, LowerRowColor1] at \shiftptno{(SdSh)}{.75}{.9} {$z_{d+d}$};    
  \end{tikzpicture}
    \uar \dar
    \\
    \begin{tikzpicture}[scale=\myscale,baseline=(current bounding box.center),rotate=90]
      \definesandalpts
      \drawuphk{black, fill}{3}
      \drawstrp{black,fill}{3}
      \drawdwhk{black,fill}{3}
      \drawuphk{black, fill}{2}
      \drawstrp{black,fill}{2}
      \drawdwhk{black,fill}{2}
      \drawuphk{black, fill}{0}
      \drawstrp{black,fill}{0}
      \drawdwhk{black,fill}{0}
      \node[xshift=0em] at \shiftpt{(Sb)}{1} {$\cdots$};
      \path[use as bounding box] \shiftpt{(Id)}{3} rectangle \shiftpt{(Ib)}{-1};
    \end{tikzpicture}
    \&\lar
    \begin{tikzpicture}[scale=\myscale,baseline=(current bounding box.center),rotate=90]
      \definesandalpts
      \drawuphk{black}{3}
      \drawstrp{black,fill}{3}
      \drawdwhk{black,fill}{3}
      \drawuphk{black}{2}
      \drawstrp{black,fill}{2}
      \drawdwhk{black,fill}{2}
      \drawuphk{black}{0}
      \drawstrp{black,fill}{0}
      \drawdwhk{black,fill}{0}
      \node[xshift=0em] at \shiftpt{(Sb)}{1} {$\cdots$};
      \path[use as bounding box] \shiftpt{(Id)}{3} rectangle \shiftpt{(Ib)}{-1};
    \end{tikzpicture}
    \rar \&
    \begin{tikzpicture}[scale=\myscale,baseline=(current bounding box.center),rotate=90]
      \definesandalpts
      \drawuphk{black, fill}{3}
      \drawstrp{black,fill}{3}
      \drawdwhk{black,fill}{3}
      \drawuphk{black, fill}{2}
      \drawstrp{black,fill}{2}
      \drawdwhk{black,fill}{2}
      \drawuphk{black, fill}{0}
      \drawstrp{black,fill}{0}
      \drawdwhk{black,fill}{0}
      \node[xshift=0em] at \shiftpt{(Sb)}{1} {$\cdots$};
      \path[use as bounding box] \shiftpt{(Id)}{3} rectangle \shiftpt{(Ib)}{-1};
    \end{tikzpicture}
  \end{tikzcd}  
\]
\end{figure}

We thus have a topological realization for all variants of the $3\times 3$ grid.

\FloatBarrier


\section{Discussion}
We have illustrated constructions of infinite families of indecomposable persistence modules together with topological realizations over the small commutative grids. By embedding, this provides constructions for all possible representation infinite commutative grids.

In addition to our families of indecomposables, other parametrized families might be of interest.
More generally, for representation tame commutative grids, could we realize parametrized families that generate all indecomposables?

\section*{Acknowledgement}
A large part of this work was performed while both authors were affiliated with WPI-AIMR, Tohoku University, Sendai, Japan. E.G.E. was partially supported by JST CREST Mathematics 15656429. This is an expanded version of the extended conference abstract \cite{buchet_et_al:LIPIcs:2018:8728} presented in SoCG 2018 and the authors are thankful to the reviewers that helped improve its quality.

\FloatBarrier
\bibliography{jitsugen_journal}


\end{document}